\documentclass[11pt]{article}
\usepackage[english]{babel}
\usepackage{amssymb,amsmath,amsthm}
\usepackage[a4paper,margin=2.54cm]{geometry}
\usepackage{graphicx,tikz,ytableau,tikz-cd,graphics}
\usepackage{appendix}

\theoremstyle{plain}
\newtheorem*{theorem*}{Theorem}
\newtheorem{theorem}{Theorem}[] 
\newtheorem{lemma}[]{Lemma}
\newtheorem{proposition}[]{Proposition}
\newtheorem{corollary}[]{Corollary}
\newtheorem{conjecture}[]{Conjecture}

\theoremstyle{definition}
\newtheorem{definition}[]{Definition}
\newtheorem{remark}[]{Remark}
\newtheorem{example}[]{Example}

%\makeatletter
%\@addtoreset{equation}{section}
%\makeatother
%\numberwithin{equation}{section}

\DeclareMathOperator{\Wr}{Wr}
\DeclareMathOperator{\He}{He}
\DeclareMathOperator{\htt}{ht}
\DeclareMathOperator{\odd}{odd}
\DeclareMathOperator{\even}{even}

\usepackage{authblk}
\title{Coefficients of Wronskian Hermite polynomials}

\author[1]{Niels Bonneux}
\author[2]{Clare Dunning}
\author[3]{Marco Stevens}
\affil[1,3]{KU Leuven, Department of Mathematics, Celestijnenlaan~200B box 2400, 3001 Leuven, Belgium. E-mail:~{\tt niels.bonneux@kuleuven.be} and {\tt marco.stevens@kuleuven.be}}
\affil[2]{University of Kent, School of Mathematics, Statistics and Actuarial Science, Canterbury CT2 7FS, United Kingdom. E-mail:~{\tt t.c.dunning@kent.ac.uk}}
\date{\today}               
\setcounter{Maxaffil}{0}

\providecommand{\keywords}[1]{\textit{\textit{Keywords: }} #1}

\usepackage{hyperref}
\hypersetup{colorlinks, citecolor=black, linkcolor=black}
\AtBeginDocument{} % otherwise error in hyperref

\begin{document}
\maketitle

\begin{abstract}\label{abstract}
We study Wronskians of Hermite polynomials labelled by partitions and use the combinatorial concepts of cores and quotients to derive explicit expressions for their coefficients. These coefficients can be expressed in terms of the characters of irreducible representations of the symmetric group, and also in terms of hook lengths. Further, we derive the asymptotic behaviour of the Wronskian Hermite polynomials when the length of the core tends to infinity, while fixing the quotient. Via this combinatorial setting, we obtain in a natural way the generalization of the correspondence between Hermite and Laguerre polynomials to Wronskian Hermite polynomials and Wronskians involving Laguerre polynomials. Lastly, we generalize most of our results to polynomials that have zeros on the $p$-star.
\end{abstract}

%%% Local Variables: 
%%% mode: latex
%%% TeX-master: "../main"
%%% End: 

\keywords{Asymptotic behaviour, coefficients, cores and quotients, characters, Hermite polynomials, hook ratios, Laguerre polynomials, Maya diagrams, partitions, Wronskians.}

\section{Introduction}
\label{sec:introduction}

In this paper we focus on Wronskians of Hermite polynomials from a combinatorial viewpoint. For every \textbf{partition} $\lambda=(\lambda_1,\lambda_2,\dots,\lambda_{\ell(\lambda)})$, i.e., a vector of positive integers such that ${\lambda_1 \geq \lambda_2 \geq \dots \geq \lambda_{\ell(\lambda)}>0}$, we consider the \textbf{Wronskian Hermite polynomial}
\begin{equation}\label{eq:WHP}
	\He_{\lambda}:= \frac{\Wr[\He_{n_1},\He_{n_2},\dots,\He_{n_{\ell(\lambda)}}]}{\Delta(n_{\lambda})}
\end{equation}
where $n_\lambda=(n_1,n_2,\dots,n_{\ell(\lambda)})$, defined by $n_i=\lambda_i+\ell(\lambda)-i$, is the \textbf{degree vector} and ${\Delta(n_\lambda)=\prod_{i<j}(n_j-n_i)}$ is the Vandermonde determinant of this vector. We use the notation~$\He_n$ for the $n^\textrm{th}$ \textbf{Hermite polynomial}, which in our convention is a monic polynomial of degree~$n$ defined by 
\begin{equation}\label{eq:recurrenceHermite}
	\He_n(x)
		=x\He_{n-1}(x)-(n-1)\He_{n-2}(x)
\end{equation}
for $n\geq2$, along with the initial conditions $\He_0(x)=1$ and $\He_1(x)=x$.

These Wronskian Hermite polynomials appear in the theory of exceptional orthogonal polynomials~\cite{Duran-Hermite,GomezUllate_Grandati_Milson-Hermite,GomezUllate_Kamran_Milson,Grandati,Haese-Hill_Hallnas_Veselov,Odake_Sasaki,Quesne} and the related topic of rational extensions of the quantum harmonic oscillator~\cite{Duistermaat_Grunbaum,GomezUllate_Grandati_Milson-krein,Marquette_Quesne,Oblomkov}. These polynomials are well-studied. For example, they fulfil recurrence relations~\cite{Bonneux_Stevens,GomezUllate_Kasman_Kuijlaars_Milson}, the asymptotic behaviour of their zeros is derived in~\cite{Kuijlaars_Milson}, and moreover, via iterating rational Darboux transformations applied to the harmonic oscillator, one obtains appealing identities between (pseudo-)Wronskians involving  Hermite polynomials~\cite{Curbera_Duran,GomezUllate_Grandati_Milson-durfee}. Another place where they appear is in the study of rational solutions of the fourth Painlev\'e equation, and in that setting they are called the generalized Hermite and generalized Okamoto polynomials~\cite{Airault,Clarkson-survey,Clarkson-PIV,Kajiwara_Ohta,Murata,Noumi_Yamada,Novokshenov_Shchelkonogov,Okamoto,Veselov,VanAssche}, which also appear in the rational solutions of the Boussinesq equation via a symmetry reduction to Painlev\'e~IV, see~\cite{Clarkson-Boussinesq,Clarkson-Dowie}. An overview of the properties of the generalized Hermite and generalized Okamoto polynomials as well as the precise associated partitions is given in~\cite{VanAssche}. Moreover, it was recently shown that any Wronskian Hermite polynomial is a rational solution of either the Painlev\'e IV equation itself or one of its higher order analogues~\cite{Clarkson_GomezUllate_Grandati_Milson}. In that paper, the authors use the notion of cyclic Maya diagrams to derive rational solutions of the~$A_{2k}$-Painlev\'e system and conjecture that all rational solutions are captured in such a way. In~\cite{Bonneux_Stevens}, the first and third author established fundamental relations for Wronskian Hermite polynomials in terms of the structure of the \textbf{Young lattice}. The most notable contribution therein was a recurrence relation that generates all Wronskian Hermite polynomials along with two initial conditions~\cite[Theorem~3.1]{Bonneux_Stevens}. Subsequently, these relations were extended by replacing the Hermite polynomials by an arbitrary Appell sequence and an explicit connection with the theory of symmetric functions was made in~\cite{Bonneux_Hamaker_Stembridge_Stevens}. A subclass of Wronskian Hermite polynomials also appear in connection with a certain integrable massless quantum field theory~\cite{Carr_Dorey_Dunning}. 

In this article, we use a combinatorial framework  to investigate the coefficients and zeros of the Wronskian Hermite polynomials. The multiplicity of their zeros is well-known to be a triangular number~\cite{Duistermaat_Grunbaum}. Moreover, Veselov's conjecture, quoted in~\cite{Felder_Hemery_Veselov}, states that the zeros are all simple, except possibly at the origin. The multiplicity $k(k+1)/2$ at the origin can be stated exactly according to the number of odd (denoted $p$), respectively even (denoted $q$), elements in the degree vector $n_\lambda$ as  $(p-q)(p-q+1)/2$. This was observed in \cite{Felder_Hemery_Veselov}, although without proof. Later, this statement was shown to hold true for Wronskians of a given sequence of eigenfunctions of the Schr\"odinger equation provided the sequence is {\it semi-degenerate}~\cite{GarciaFerrero_GomezUllate}. However, it remains an open question as to whether the Hermite setting fulfils this semi-degenerate property. We prove via a combinatorial framework that the Wronskian Hermite polynomial can be factorized as
\begin{equation}\label{eq:factorizationHe}
	\He_\lambda(x)
		= x^{\frac{k(k+1)}{2}} R_\lambda(x^2)
\end{equation}
where $R_{\lambda}(0)\neq0$. Moreover, we give a simple combinatorial interpretation for the integer~$k$. Henceforth we  call $R_\lambda$ the \textbf{remainder polynomial}.

\begin{figure}[t]
	\centering
	\begin{tiny}
		\begin{tikzcd}
		& \ydiagram{2,2,2,1} \arrow[r] & \ydiagram{2,1,1,1} \arrow[rd] & \\
		\ydiagram{4,2,2,1} \arrow[ru] \arrow[rd] & & & \ydiagram{2,1} \\
		& \ydiagram{4,1,1,1} \arrow[ruu] \arrow[r] & \ydiagram{4,1} \arrow[ru] &
		\end{tikzcd}
	\end{tiny}
	\caption{All possible choices of sequentially removing domino tiles from the Young diagram of $(4,2^2,1)$. The process terminates at the core $(2,1)$ and the associated Wronskian Hermite polynomial is $\He_{(4,2^2,1)}(x)=x^3(x^6+x^4-7x^2-35)$.}
	\label{fig:dominoprocess}
\end{figure}
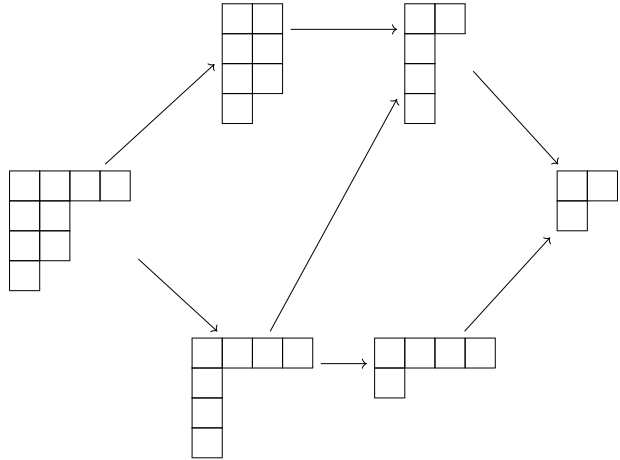

It was proven in~\cite{Bonneux_Hamaker_Stembridge_Stevens} that the coefficients of Wronskian Hermite polynomials, using the convention~\eqref{eq:WHP}, are integers, but explicit values or expressions for the coefficients were not given. We now show that the coefficients and zeros can be understood by considering \textbf{domino tilings} of the \textbf{Young diagram} associated to the partition. The key ingredient is the process of removing rectangles of size 2 (that we naturally refer to as domino tiles), in such a way that the remaining diagram is still a Young diagram of a partition. An example of such a process is given in Figure~\ref{fig:dominoprocess}. Each process terminates when one arrives at a partition of the form $(k,k-1,\dots,2,1)$ for some $k\geq 1$, or the empty partition which we refer to as $k=0$. We prove that the value $k$ is precisely the same $k$ as that specifying the multiplicity of the zero at the origin~\eqref{eq:factorizationHe}. The terminating partition, which always has a staircase Young diagram, is called the \textbf{core} of the original partition~\cite[I.1,~Ex.~8]{MacDonald} and has size $k(k+1)/2$. Hence the result concerning the multiplicity of the zero at the origin has the combinatorial interpretation as the size of the core associated to the partition, see Theorem~\ref{thm:WHdecomposition}. The coefficients of the remainder polynomial $R_\lambda$ can also be expressed within this combinatorial setting and a precise statement is given in Theorem~\ref{thm:coefficients}.

If we let $d$ be the number of domino tiles that one has to remove from the Young diagram of $\lambda$ to obtain its core, we have that $\deg(R_\lambda)=d$. In fact, we show that if $0\leq l \leq d$, then the coefficient of $x^{d-l}$ corresponds to data contained in the partitions one obtains after removing $l$ domino tiles. This process of removing domino tiles from a partition is related to the \textbf{quotient} of the original partition~\cite[I.1,~Ex.~8]{MacDonald}. The quotient is an ordered pair of partitions and they form a lattice isomorphic to the graded lattice $\mathbb{Y} \times \mathbb{Y}$. If $(\mu,\nu)$ is the quotient of the partition~$\lambda$, then the domino process corresponds to all directed paths from $(\mu,\nu)$ to $(\emptyset,\emptyset)$ in $\mathbb{Y} \times \mathbb{Y}$. The example in Figure~\ref{fig:dominoprocess} translates to Figure~\ref{fig:dominoprocessforquotients}.

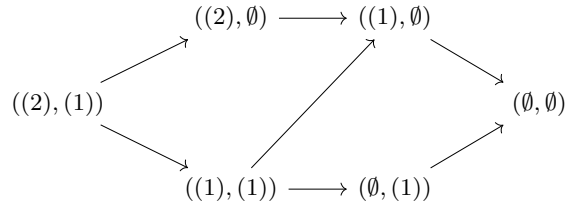
\begin{figure}[t]
	\centering
\begin{footnotesize}
\begin{tikzcd}
	& ((2),\emptyset) \arrow[r] & ((1),\emptyset) \arrow[rd] & \\
	((2),(1)) \arrow[ru] \arrow[rd] & & & (\emptyset,\emptyset) \\
	& ((1),(1)) \arrow[ruu] \arrow[r] & (\emptyset,(1)) \arrow[ru] &
\end{tikzcd}
\end{footnotesize}
\caption{The process of removing domino tiles from the Young diagram of $(4,2^2,1)$ in terms of quotients $(\mu,\nu)$.}
\label{fig:dominoprocessforquotients}
\end{figure}

The combinatorial preliminaries for this article are given in Section~\ref{sec:preliminaries}. In the subsequent sections, we state and prove our main results as described in the following list.
\begin{description}
	\item[Section~\ref{sec:2coresandWHP}.]
	The precise statement of the factorization in~\eqref{eq:factorizationHe} is given in Theorem~\ref{thm:WHdecomposition}.
	
	\item[Section~\ref{sec:coefficients}.] 
	We give several expressions for the coefficients of Wronskian Hermite polynomials.
	\begin{description}
		\item[Section \ref{sec:coeffcharacters}.] Coefficients in terms of characters of irreducible representations of the symmetric group in Theorem~\ref{thm:WHcoeff}.
		\item[Section \ref{sec:coeffhooklengths}.] Coefficients in terms of hook lengths in Theorem~\ref{thm:coefficients}.
		\item[Section \ref{subsec:coeffpoly}.] Coefficients as polynomials in the parameter $k$ in  Theorem~\ref{thm:coeffpoly}.
		\item[Section \ref{sec:coeffsubleading}.] The subleading coefficient of the remainder polynomial in terms of the content in  Proposition~\ref{prop:subleadingcoeff}.
	\end{description}

	\item[Section~\ref{sec:asymptotics}.] 
	We fix the quotient and derive the asymptotic behaviour of the remainder polynomial~$R_\lambda$ when the size of the core $k(k+1)/2$ tends to infinity; see  Theorem~\ref{thm:asymptotics}.
	
	\item[Section~\ref{sec:connectionwithlaguerre}.]
	We identify Wronskian Hermite polynomials with Wronskians involving Laguerre polynomials \cite{Bonneux_Kuijlaars,Duran-Laguerre,Duran_Perez,GomezUllate_Grandati_Milson-L+J}. The obtained identity is naturally labelled by the partition and its core and quotient; see Proposition~\ref{prop:WHermiteWLaguerre}. This generalizes well-known identities that interpret Hermite polynomials in terms of Laguerre polynomials; see \eqref{eq:HermiteLaguerre} below.
	
	\item[Section~\ref{sec:generalization}.] 
	Dominoes -- $2\times1$ or $1\times2$ rectangles -- can be viewed as {\it border strips} of size~2. Removing dominoes from a Young diagram can be generalized to removing border strips of size~$p$. In this way we obtain the $p$-core and the associated $p$-quotient of a partition~\cite[I.1,~Ex.~8]{MacDonald}, with $p=2$ representing the removal of dominoes described in the Hermite setting. Therefore we can generalize most of our results to general integers $p$, as explained in Section~\ref{sec:generalization}. 
\end{description}

As a by-product of our work, we obtain results concerning ratios of hook lengths, see Corollary~\ref{cor:int2} and Corollary~\ref{cor:intp}. Both corollaries also follow from a more general statement about hooks in partitions, see Theorem~4.4 and Remark~4.5 in~\cite{Bessenrodt}, although our argument to arrive at these statements is completely different. Other related results about hook ratios can be found in~\cite{Dehaye}.

Finally, we end with a conclusion including a description of further research possibilities. The appendix contains a combinatorial identity and an extension of Proposition~\ref{prop:subleadingcoeff} to Wronskian Appell polynomials \cite{Bonneux_Hamaker_Stembridge_Stevens}.

%%% Local Variables: 
%%% mode: latex
%%% TeX-master: "../main"
%%% End: 

\section{Preliminaries}\label{sec:preliminaries}
We introduce the combinatorial concepts of partitions, cores and quotients and related aspects including Sato's Maya diagrams. A standard reference for partitions is~\cite{MacDonald} or \cite{Stanley_EC2} where these concepts (except for Maya diagrams) are clearly explained, and \cite{Sato1,Sato2} for an introduction to Maya diagrams. For some recent advances about cores, we refer to~\cite{Nath} and the references therein. 

\subsection{Partitions and Maya diagrams}
A \textbf{partition} is a vector $\lambda=(\lambda_1,\lambda_2,\dots,\lambda_{\ell(\lambda)})$ of integers such that $\lambda_1\geq \lambda_2 \geq \dots \geq \lambda_{\ell(\lambda)}> 0$. Its \textbf{size} is denoted by $\lvert \lambda \rvert =\lambda_1 + \lambda_2 +\dots + \lambda_{\ell(\lambda)}$ and its \textbf{length} by $\ell(\lambda)$. The \textbf{degree vector} associated to $\lambda$ is given by $n_\lambda=(n_1,n_2,\dots,n_{\ell(\lambda)})$, where $n_i = \lambda_i +{\ell(\lambda)}-i$. We especially note that this implies that $n_1>n_2>\dots>n_{\ell(\lambda)}> 0$. Each partition $\lambda$ can be visualized by its \textbf{Young diagram}
\begin{equation*}
	D_\lambda
		=\{(i,j)\in\mathbb{Z}^2: 1\le i\le\ell(\lambda),\ 1\le j\le\lambda_i\}
\end{equation*}
which consists of $\ell(\lambda)$ rows, and the $i^\textrm{th}$ row has $\lambda_i$ boxes. The points $(i,j)\in D_\lambda$ are often depicted as unit squares with matrix-style coordinates. Clearly, the size of the partition is equal to the number of boxes in its Young diagram. The \textbf{Young lattice} $\mathbb{Y}$ is the set of all partitions partially ordered by inclusion of the corresponding Young diagrams, that is $\mu\leq\lambda$ if $\mu_i \leq \lambda_i$ for all $i=1,2,\dots,\ell(\mu)$. We write $\mu\lessdot\lambda$ or $\lambda\gtrdot\mu$ to indicate that
$\lambda$ covers $\mu$ in $\mathbb{Y}$; that is $\mu<\lambda$ and $|\lambda|-|\mu|=1$. For convenience, within a partition we let $\lambda_i^{t}$ denote $\lambda_i$ repeated $t$ times, and $\emptyset$ denotes the unique partition of zero.

The \textbf{Maya diagram} $M_\lambda$ associated to a partition $\lambda$ is the set
\begin{equation*}
	M_\lambda
		= \{ n \in \mathbb{Z} \mid n<0\} \cup \{n_i \mid 1\leq i \leq \ell(\lambda)\} \subset \mathbb{Z}
\end{equation*}
where  the elements $n_i$ form the degree vector of $\lambda$. This diagram can be visualized by a doubly-infinite sequence of consecutive boxes that are  either filled with a dot or are empty. The boxes are labelled by the integers and the $n^\textrm{th}$ box is filled precisely when $n\in M_\lambda$. Furthermore, a vertical line is placed between the boxes labelled $-1$ and $0$; subsequently we can omit the labels. We can shift the origin $t$ steps (for any positive integer $t$) to the left such that the sequence of filled and empty boxes remains unchanged, but the labelling differs. We call such Maya diagrams, denoted by $M_{\lambda}+t$, equivalent to $M_{\lambda}$ and we refer to $M_{\lambda}$ as the canonical Maya diagram associated to $\lambda$. From the assumption $\lambda_{\ell(\lambda)}>0$ it follows that $M_{\lambda}$ is the unique diagram 
in which the first box to the right of the origin is empty, whereas equivalent Maya diagrams start with filled boxes. This can be interpreted as adding $t$ zeros to the partition. See Figure~\ref{fig:exampleMayaYoung} (left) for an example and observe that the number of filled boxes to the right of the origin is equal to $\ell(\lambda)+t$ for any equivalent Maya diagram. The parts of the partition are read off any Maya diagram by counting the number of empty boxes to the left of each filled box. 

\begin{figure}[t]
\centering
\begin{tikzpicture}[scale=0.5]
	\draw[very thin,color=gray] (-2.5,1) grid (12.5,0);
	\draw[thick,color=black] (0,1.4) -- (0,-0.4);
	\draw (-3,0.5) node {$\dots$};
	\draw (13,0.5) node {$\dots$};
	\draw (-6,0.5) node {$M_{(4^2,2^2,1)}$};
	\foreach \x in {-2,-1,1,3,4,7,8}
	{
		\draw (\x+0.5,0.5) node {$\bullet$};
	}

	\draw[very thin,color=gray] (-2.5,-1) grid (12.5,-2);
	\draw[thick,color=black] (0,-0.6) -- (0,-2.4);
	\draw (-3,-1.5) node {$\dots$};
	\draw (13,-1.5) node {$\dots$};
	\draw (-6,-1.5) node {$M_{(4^2,2^2,1)}+2$};
	\foreach \x in {-2,-1,0,1,3,5,6,9,10}
	{
		\draw (\x+0.5,-1.5) node {$\bullet$};
	}
	\draw (18,-0.5) node 
		{	
			\begin{ytableau}
				8 & 6 & 3 & 2 \\
				7 & 5 & 2 & 1 \\
				4 & 2 \\
				3 & 1 \\
				1
			\end{ytableau}
		};
\end{tikzpicture}
\caption{Left: the canonical and an equivalent Maya diagram associated to the partition $\lambda=(4^2,2^2,1)$. Right: the corresponding Young diagram and its hook lengths.}
\label{fig:exampleMayaYoung}
\end{figure}
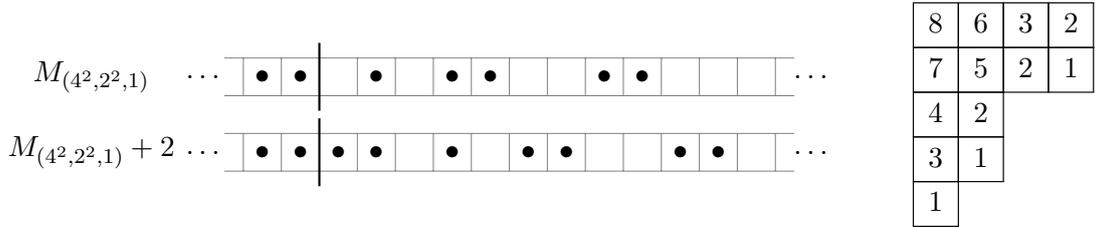

If one reflects a Young diagram in the diagonal, that is rows switch to columns and vice versa, one obtains the Young diagram of the \textbf{conjugate partition} $\lambda'$. In terms of Maya diagrams, conjugation amounts to reflecting the diagram under the mapping $z\mapsto -z-1$, interchanging the filled boxes with the empty boxes and shifting the vertical line so that it lies before the first empty box.

For every box in a Young diagram, the \textbf{hook length} is the number of boxes in the column below the box, plus the number of boxes to the right plus one to account for the box itself. See Figure~\ref{fig:exampleMayaYoung} (right) for an example, where the hook lengths are written inside the boxes. One observes that the hook lengths in the first column of the Young diagram are precisely the elements of the degree vector of the partition.

The number of \textbf{directed paths} (equivalently \textit{saturated chains}) in the Young lattice between the empty partition $\emptyset$ and a partition $\lambda$ is denoted by $F_\lambda$. It is also the number of standard Young tableaux of shape $\lambda$ and it equals the dimension of the irreducible representation of the symmetric group associated to $\lambda$. This number has several useful expressions in terms of partition data.  For this, we write $h_{i,j}$ for the hook length of the $j^\textrm{th}$ box in the $i^\textrm{th}$ row and define $H(\lambda):=\prod_{(i,j)\in \lambda} h_{i,j}$. Furthermore, we let $\Delta(n_{\lambda})$ be the Vandermonde determinant of the degree vector $n_\lambda$, that is $\Delta(n_{\lambda})=\prod_{i<j}(n_j-n_i)$. Then
\begin{equation}\label{eq:Flambdahooklengths}
	F_\lambda 
		= \frac{\lvert \lambda \rvert!}{H(\lambda)}
	\qquad
	\qquad
	H(\lambda)
		= \frac{\prod_{i} n_i!}{|\Delta(n_{\lambda})|}
\end{equation}
for every partition $\lambda$. Further, for any pair of partitions $\tilde{\lambda}\leq \lambda$ we write 
$F_{\lambda/\tilde{\lambda}}$ for the number of paths from $\tilde{\lambda}$ to $\lambda$. In particular, $F_{\lambda/\emptyset}=F_{\lambda}$.

The set $\mathbb{Y}\times\mathbb{Y}$ is a graded lattice, when using the ordering $(\tilde{\mu},\tilde{\nu})\leq(\mu,\nu)$, if and only if $\tilde{\mu}\leq\mu$ and $\tilde{\nu}\leq\nu$. We set $|(\mu,\nu)|:=|\mu|+|\nu|$ and for each non-negative integer $j$ we write $(\tilde{\mu},\tilde{\nu})<_{j}(\mu,\nu)$ if $(\tilde{\mu},\tilde{\nu})\leq(\mu,\nu)$ and $|(\tilde{\mu},\tilde{\nu})|+j=|(\mu,\nu)|$. When $j=1$, we sometimes write~$\lessdot$ instead of~$<_{1}$. In this case either $\tilde{\mu}=\mu$ or $\tilde{\nu}=\nu$. Lastly, we set $F_{(\mu,\nu)}^{(2)}$ to be the number of directed paths in the lattice $\mathbb{Y}\times \mathbb{Y}$ from $(\emptyset,\emptyset)$ to $(\mu,\nu)$ and similarly we denote by $F_{(\mu,\nu)/(\tilde{\mu},\tilde{\nu})}^{(2)}$ the number of paths in $\mathbb{Y}\times \mathbb{Y}$ from $(\tilde{\mu},\tilde{\nu})$ to $(\mu,\nu)$. One then immediately has that
\begin{equation}\label{eq:F2withF1}
	F_{(\mu,\nu)}^{(2)} = \binom{\lvert \mu \rvert + \lvert \nu \rvert}{\lvert \mu \rvert} F_\mu F_\nu,
	\qquad \qquad
	F_{(\mu,\nu)/(\tilde{\mu},\tilde{\nu})}^{(2)} = \binom{\lvert \mu/\tilde{\mu} \rvert + \lvert \nu/\tilde{\nu} \rvert}{\lvert \mu/\tilde{\mu} \rvert} F_{\mu/\tilde{\mu}} F_{\nu/\tilde{\nu}}.
\end{equation}

\subsection{2-quotients and 2-cores}\label{sec:coresandquotients}
The following construction of the 2-core, labelled by $k$, and the 2-quotient is based on~\cite[I.1,~Ex. 8]{MacDonald} and is comparable with~\cite[Definition~4.5]{Clarkson_GomezUllate_Grandati_Milson}. An equivalent construction using the language of an abacus is well-known and is clearly explained, for example, in~\cite{Wildon}. 

Define the map 
\begin{equation}\label{eq:Phi}
	\Phi: \mathbb{Y}\times\mathbb{Y}\times\mathbb{Z} \to \mathbb{Y}: (\mu,\nu,k) \mapsto \lambda
\end{equation} 
in the following way. For a given pair of partitions $(\mu,\nu)$ and integer $k$, pick two non-negative integers $s$ and $s'$ such that 
\begin{equation}\label{eq:shift}
	\ell(\nu)+s'-\ell(\mu)-s = k
\end{equation}
and consider the following equivalent Maya diagrams 
\begin{equation*}
	M^{(0)}=M_{\mu}+s,
	\qquad \qquad
	M^{(1)}=M_{\nu}+s'.
\end{equation*}
Next, define a third Maya diagram $M$ so that the 2-modular decomposition of 
$M,$ see~\cite{Clarkson_GomezUllate_Grandati_Milson}, is given by $(M^{(0)},M^{(1)})$. That is, the elements in $M$ are such that 
\begin{equation}\label{eq:twomodulardecomposition}
	M^{(i)}
		= \{m\in\mathbb{Z} \mid 2m+i\in M \}
\end{equation}
for $i=0,1$. Finally, the image $\Phi(\mu,\nu,k)$ is the unique partition~$\lambda$ such that $M$ is equivalent to~$M_{\lambda}$. We give an example.

\begin{example}\label{ex:quotient}
Let $\mu=(3,2)$, $\nu=(1)$ and $k=1$. We choose  $s=1$ and $s'=3$ so that~\eqref{eq:shift} holds. The upper part in Figure~\ref{fig:quotient} represents the Maya diagrams~$M^{(0)}$~and~$M^{(1)}$. Subsequently, the Maya diagram $M$ is obtained by taking each box of $M^{(0)}$ and $M^{(1)}$ alternatively as indicated in the figure. Finally, observe that $M=M_{\lambda}+2$ for $\lambda=(4^2,2^2,1)$ and therefore ${\Phi((3,2),(1),1)=(4^2,2^2,1)}$.
\end{example}

\begin{figure}[t]
	\centering
	\begin{tikzpicture}[scale=0.5]
	\draw[fill=black!20!white,draw=none] (-2.5,2) rectangle (10.5,1);
	\draw[very thin,color=gray] (-2.5,2) grid (10.5,1);
	\draw[thick,color=black] (0,2.4) -- (0,0.6);
	\draw (-3,1.5) node {$\dots$};
	\draw (11,1.5) node {$\dots$};
	\foreach \x in {-2,-1,0,3,5}
	{
		\draw (\x+0.5,1.5) node {$\bullet$};
	}
	\draw[left] (-4,1.5) node {$M^{(0)}= M_{\mu}+1$};
	\draw[right] (11.5,1.5) node {$M^{(0)}= \{5,3,0,-1,-2,\dots\}$};
	
	\draw[very thin,color=gray] (-2.5,0) grid (10.5,-1);
	\draw[thick,color=black] (0,0.4) -- (0,-1.4);
	\draw (-3,-0.5) node {$\dots$};
	\draw (11,-0.5) node {$\dots$};
	\foreach \x in {-2,-1,0,1,2,4}
	{
		\draw (\x+0.5,-0.5) node {$\bullet$};
	}
	\draw[left] (-4,-0.5) node {$M^{(1)}= M_{\nu}+3$};
	\draw[right] (11.5,-0.5) node {$M^{(1)}= \{4,2,1,0,-1,-2,\dots\}$};
	\end{tikzpicture}
	
	\vspace{0.5cm}
	
	\begin{tikzpicture}[scale=0.5]
	\foreach \x in {0,2,4,6,8,10,12,14}
	{
		\draw[fill=black!20!white,draw=none] (-2+\x,1) rectangle (-1+\x,0);
	}
	\draw[very thin,color=gray] (-2.5,1) grid (13.5,0);
	\draw[thick,color=black] (0,1.4) -- (0,-0.4);
	\draw (-3,0.5) node {$\dots$};
	\draw (14,0.5) node {$\dots$};
	\foreach \x in {-2,-1,0,1,3,5,6,9,10}
	{
		\draw (\x+0.5,0.5) node {$\bullet$};
	}
	\draw[right] (15,0.5) node {$M= \{10,9,6,5,3,1,0,-1,-2,\dots\}$};
	\end{tikzpicture}
	\caption{The Maya diagrams corresponding to Example~\ref{ex:quotient}.}
	\label{fig:quotient}
\end{figure}
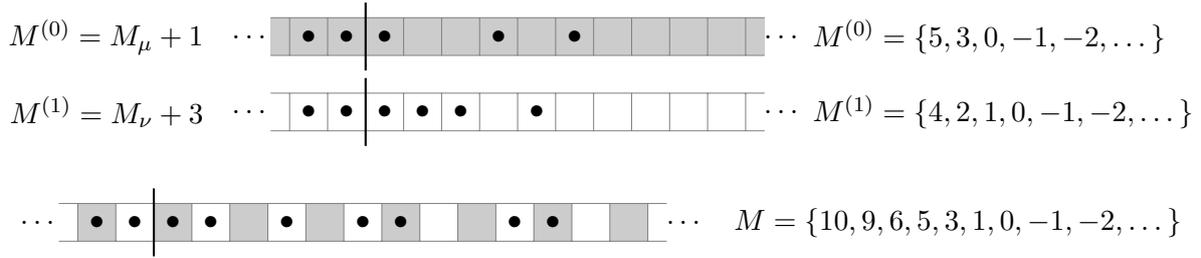

The map~\eqref{eq:Phi} is well-defined because, although we have one degree of freedom in choosing $s$ and $s'$, see~\eqref{eq:shift}, any other choice leads to two Maya diagrams $\widetilde{M}^{(0)}$ and $\widetilde{M}^{(1)}$ that are equivalent to $M^{(0)}$ and $M^{(1)}$. Then $\widetilde{M}$ is equivalent to $M$ and so we end up with the same partition. We trivially have that by construction $\Phi$ is surjective, but not injective. In fact, one can show that
\begin{equation}\label{eq:kand-1-k}
	\Phi(\mu,\nu,k)=\Phi(\tilde{\mu},\tilde{\nu},\tilde{k}) 
	\quad
	\Leftrightarrow
	\quad
	(\mu,\nu,k)=(\tilde{\nu},\tilde{\mu},-\tilde{k}-1).
\end{equation}
Hence the restriction of $\Phi$ to $\mathbb{Y}\times\mathbb{Y}\times\mathbb{Z}_{\geq0}$ is a bijection.

\begin{definition}
For any partition $\lambda$, take the ordered pair $(\mu,\nu)$ and integer $k\geq0$ such that $\lambda=\Phi(\mu,\nu,k)$. Then we call $(\mu,\nu)$ the 2-quotient (shortly the \textbf{quotient}) of $\lambda$. The partition $\bar{\lambda}:=(k,k-1,\dots,2,1)$ is called the 2-core (shortly the \textbf{core}) of $\lambda$.
\end{definition}

The precise ordering of the partitions $\mu$ and $\nu$ in the 2-quotient specified by~\eqref{eq:twomodulardecomposition} is necessary for our purposes since it uniquely distinguishes between $(\mu,\nu)$ and $(\nu,\mu)$ whenever $\mu \ne \nu$. Our ordering matches that which are given for 2-quotients in~\cite{Wildon}. With our ordering convention, the quotient of the conjugate partition $\lambda'$ is $(\nu',\mu')$ and its core is $(k,k-1,\dots,2,1)$ obviously.

Let $\tilde{\lambda}$ be a partition  obtained by removing a domino tile ($2\times1$ or $1\times2$ rectangle) from the Young diagram of~$\lambda$. Removing any  domino corresponds in the Maya diagram picture to swapping a filled box with an empty box two places to its left. Removing a horizontal domino in a partition is equivalent to exchanging an ordered triplet of empty, empty and filled boxes with filled, empty, empty boxes. Similarly, deleting a vertical domino is represented by exchanging a triplet of empty, filled, filled boxes with filled, filled, empty boxes. Therefore we have that the quotient of~$\tilde{\lambda}$, denoted by $(\tilde{\mu},\tilde{\nu})$, must satisfy $(\tilde{\mu},\tilde{\nu})\lessdot(\mu,\nu)$ where $(\mu,\nu)$ is the quotient of~$\lambda$. This then implies that at most~$|\mu|+|\nu|$ dominoes can be removed from~$\lambda$. After deleting this maximal number of dominoes, one ends up with the core $\bar{\lambda}=(k,k-1,\dots,2,1)$. Defining the skew Young diagram $\lambda /\bar{\lambda}$ to be the set-difference between the Young diagram of $\lambda$ and $\bar{\lambda}$, it is therefore trivial that one can always tile the skew Young diagram $\lambda/\bar{\lambda}$ with dominoes. Moreover
\begin{equation}\label{eq:sizelambda}
	|\lambda| = |\bar{\lambda}|+2(|\mu|+|\nu|)
\end{equation}
and, obviously, $|\bar{\lambda}|=k(k+1)/2$.

In accordance with the standard conventions, if $\tilde{\lambda}$ is obtained by removing a domino tile from~$\lambda$, then the height is defined to be  $\htt(\lambda/\tilde{\lambda})=1$ if the domino tile is vertical, and $\htt(\lambda/\tilde{\lambda})=0$ if it is horizontal. Both numbers equal the number of rows that the domino occupies minus one. More generally, if $\tilde{\lambda}$ is obtained by removing a certain number of domino tiles from $\lambda$, of which $m$ tiles are vertical, then we set $\htt_2(\lambda/\tilde{\lambda})=m$, which is the sum of all individual heights. The subscript~2 indicates that we remove domino tiles, that is border strips of size~2. It is well-known that the parity of $\htt_2(\lambda/\tilde{\lambda})$ is independent of the choice of tiling. In other words, the parity of the number of vertical tiles is invariant~\cite{MacDonald}.

If one considers the hook lengths in the Young diagram of a partition $\lambda$, one can determine the (unordered) elements $\mu$ and $\nu$ of the quotient and its core. We illustrate this explicitly in Figure~\ref{fig:hooklengthscore} for our running example partition $\lambda=(4^2,2^2,1)$. Namely, note that in this example, there are 6 cells with an even hook length and 7 with an odd hook length. In general, there will always be at least as many odd hook lengths as even hook lengths. In fact, the difference (in the example $7-6=1$) is a triangular number for every partition, since it is the size of the core of the partition. Next, we shade all the cells with an even hook length, using two colours. If two  such cells are in the same row or column, they are required to have the same colour. It can be proven that this divides the cells with even hook lengths into two (possibly-empty) groups; see~\cite[I.1,~Ex. 8]{MacDonald}. These two groups form the Young diagrams of partitions $\mu$ and $\nu$, as shown on the right in Figure~\ref{fig:hooklengthscore}. Moreover, the hook lengths in the shaded cells are precisely twice the hook lengths in the diagrams of $\mu$ and $\nu$. The order of the partitions is not easily read off from the hook lengths, but for the following formulas, this does not matter. We write $H_{\odd}(\lambda)$ for the product of all odd hook lengths of~$\lambda$, and $H_{\even}(\lambda)$ for the product of all even hook lengths of~$\lambda$. Using~\eqref{eq:Flambdahooklengths}, it is clear that
\begin{equation}\label{eq:Flambdaquotient}
	F_{\lambda} = \frac{|\lambda|!}{H_{\odd}(\lambda) H_{\even}(\lambda)} 
		= \frac{|\lambda|!}{H_{\odd}(\lambda) 2^{|\mu|+|\nu|} H(\mu) H(\nu)}.
\end{equation}
One also observes that the hook lengths of all cores are odd, where a core is said to be a partition that has empty quotient or, equivalently, a partition that is its own core. 

\begin{figure}[t]
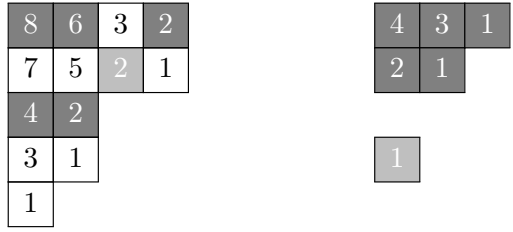

	\centering
	\begin{ytableau}
		*(gray) {\color{white} 8} & *(gray) {\color{white} 6} & 3 & *(gray) {\color{white} 2} \\
		7 & 5 & *(lightgray) {\color{white} 2} & 1 \\
		*(gray) {\color{white} 4} & *(gray) {\color{white} 2} \\
		3 & 1 \\
		1
	\end{ytableau}
	\qquad \qquad \qquad
	\begin{ytableau}
		*(gray) {\color{white} 4} & *(gray) {\color{white} 3} & *(gray) {\color{white} 1} \\
		*(gray) {\color{white} 2} & *(gray) {\color{white} 1} \\
		\none \\
		*(lightgray) {\color{white} 1}
	\end{ytableau}
	\caption{Left: the Young diagram of $(4^2,2^2,1)$ and its hook lengths. Right: the Young diagrams of $\mu=(3,2)$ and $\nu=(1)$, which form the quotient $(\mu,\nu)$, and their hook lengths.}
	\label{fig:hooklengthscore}
\end{figure}

\begin{remark}
The above definitions can easily be generalized to the notion of $p$-quotients and $p$-cores  using the $p$-modular decomposition of a Maya diagram; see Section~\ref{sec:generalization}. This is connected with removing a border strip of size $p$ from a Young diagram, where a border strip is a skew Young diagram that is connected and does not contain any $2\times2$ squares~\cite{MacDonald,Stanley_EC2}. Note that border strips of size 2 are actually dominoes.
\end{remark}

We explicitly state the quotient $(\mu,\nu)$ and core $\bar \lambda$ of some specific partitions, for easy referencing. To this end, let $\lceil \cdot \rceil$ denote the ceiling function and $\lfloor \cdot \rfloor$ represent the floor function. We start with the trivial partitions.

\begin{lemma}\label{lem:CoreAndQuotientTrivialPartition}
Any trivial partition $\lambda=(n)$ has core and quotient given by
\begin{equation*}
	\bar{\lambda}
		=\begin{cases}
			\emptyset & \text{if $n$ even,} \\
			(1) & \text{if $n$ odd,}
		\end{cases}
	\qquad \qquad
	(\mu,\nu)
		=\big(
		\emptyset
		, 
		(\lfloor n/2\rfloor)
		\big).
	\end{equation*}
\end{lemma}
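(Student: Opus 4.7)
The plan is to unwind the definition of the map $\Phi$ from~\eqref{eq:Phi} applied to the canonical Maya diagram of $(n)$. Since $\ell((n))=1$ and $n_1=n$, we have $M_{(n)}=\mathbb{Z}_{<0}\cup\{n\}$, so the entire argument reduces to computing the two subsequences $M^{(0)},M^{(1)}$ defined by~\eqref{eq:twomodulardecomposition} and recognising each as a (possibly shifted) canonical Maya diagram. The only element of $M_{(n)}$ that is not a negative integer is $n$ itself, so the structure of $M^{(0)},M^{(1)}$ is entirely controlled by the parity of $n$, and the case split in the statement reflects exactly this.

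When $n=2j+1$ is odd with $j\geq 1$, the filled position $n$ contributes to $M^{(1)}$ at index $j$ while $M^{(0)}$ receives nothing beyond the negatives: $M^{(0)}=\mathbb{Z}_{<0}=M_\emptyset$ and $M^{(1)}=\mathbb{Z}_{<0}\cup\{j\}=M_{(j)}$, both in canonical form (shifts $s=s'=0$). One therefore reads off $\mu=\emptyset$, $\nu=(j)$, and $k=\ell(\nu)+s'-\ell(\mu)-s=1$, yielding core $(1)$ and quotient $(\emptyset,(j))=(\emptyset,(\lfloor n/2\rfloor))$. The boundary case $n=1$ is essentially the same: now $M^{(1)}=\mathbb{Z}_{<0}\cup\{0\}$ is not canonical but equals $M_\emptyset+1$, so $\nu=\emptyset$, $s'=1$, and once again $k=1$; the stated formula $(\emptyset,(\lfloor 1/2\rfloor))=(\emptyset,(0))$ is then to be read with the standard convention $(0)=\emptyset$.

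When $n=2j$ is even with $j\geq 1$, the roles swap: $M^{(0)}=\mathbb{Z}_{<0}\cup\{j\}=M_{(j)}$ and $M^{(1)}=\mathbb{Z}_{<0}=M_\emptyset$, both canonical. The direct reading of~\eqref{eq:shift} yields the raw triple $((j),\emptyset,-1)$ with $k=-1<0$, which does not lie in the domain $\mathbb{Y}\times\mathbb{Y}\times\mathbb{Z}_{\geq0}$ on which $\Phi$ is a bijection. To extract the canonical representative we invoke the symmetry~\eqref{eq:kand-1-k}, which replaces $((j),\emptyset,-1)$ by $(\emptyset,(j),0)$. Hence the core is $\emptyset$ and the quotient is $(\emptyset,(n/2))=(\emptyset,(\lfloor n/2\rfloor))$, as claimed. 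No step presents a genuine obstacle; the only mildly subtle point is the sign of $k$ in the even case, which requires the symmetry~\eqref{eq:kand-1-k} to bring the triple into the nonnegative range.
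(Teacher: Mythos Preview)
The paper states this lemma without proof, treating it as an easy consequence of the definitions that is recorded only ``for easy referencing.'' Your argument is correct: you compute the $2$-modular decomposition of $M_{(n)}$ directly, read off the two component Maya diagrams as canonical (or once-shifted) diagrams of $\emptyset$ and $(\lfloor n/2\rfloor)$, and in the even case use the symmetry~\eqref{eq:kand-1-k} to normalise the resulting triple $((j),\emptyset,-1)$ to $(\emptyset,(j),0)$. This is exactly the kind of routine unwinding the authors had in mind, and each step checks out.
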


In the context of orthogonality for exceptional Hermite polynomials, one is interested in \textit{even} partitions~\cite{Duran-Hermite,GomezUllate_Grandati_Milson-Hermite,Haese-Hill_Hallnas_Veselov}. 

\begin{lemma}
An even partition $\lambda=(\lambda_1^2,\lambda_2^2,\dots,\lambda_{l}^2)$ has empty core and quotient given by
\begin{equation*}
	(\mu,\nu)
		= \big(
		(\lceil \lambda_1/2 \rceil, \lceil \lambda_2/2 \rceil,\dots, \lceil \lambda_{l}/2 \rceil)
		,
		(\lfloor \lambda_1/2 \rfloor, \lfloor \lambda_2/2 \rfloor,\dots, \lfloor \lambda_{l}/2 \rfloor)
		\big).
\end{equation*}      
\end{lemma}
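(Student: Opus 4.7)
The plan is to exploit the 2-modular decomposition~\eqref{eq:twomodulardecomposition} of the Maya diagram of $\lambda$ directly. I will compute $M^{(0)}$ and $M^{(1)}$ explicitly, identify each as an equivalent of the canonical Maya diagram of the claimed partition, and then read off the core index $k$ from the relation~\eqref{eq:shift}.

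The key structural observation is that for $\lambda=(\lambda_1^2,\lambda_2^2,\dots,\lambda_l^2)$ the degree vector $n_\lambda$, of length $2l$, consists of $l$ consecutive pairs. Indeed, $n_i=\lambda_{\lceil i/2\rceil}+2l-i$ gives $n_{2j-1}=\lambda_j+2l-2j+1$ and $n_{2j}=\lambda_j+2l-2j$, so $n_{2j-1}=n_{2j}+1$ for every $j$. A brief case analysis on the parity of $\lambda_j$ then shows that each such pair contributes exactly one even element $2(\lceil\lambda_j/2\rceil+l-j)$ to $\{n_i\}$ and exactly one odd element $2(\lfloor\lambda_j/2\rfloor+l-j)+1$. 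Combining with the fact that every negative integer belongs to $M_\lambda$, one obtains
\[
	M^{(0)}=\{\lceil\lambda_j/2\rceil+l-j:1\le j\le l\}\cup\{m\in\mathbb{Z}:m<0\},
\]
\[
	M^{(1)}=\{\lfloor\lambda_j/2\rfloor+l-j:1\le j\le l\}\cup\{m\in\mathbb{Z}:m<0\}.
\]

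To finish, set $\mu=(\lceil\lambda_1/2\rceil,\dots,\lceil\lambda_l/2\rceil)$; every entry is at least $1$, so $\ell(\mu)=l$ and the enumerated non-negative values in $M^{(0)}$ are precisely the degree vector $n_\mu$, giving $M^{(0)}=M_\mu$ with shift $s=0$. For $\nu=(\lfloor\lambda_1/2\rfloor,\dots,\lfloor\lambda_l/2\rfloor)$ the subtlety is that trailing entries with $\lambda_j=1$ produce zero parts that must be discarded to obtain a genuine partition. Letting $l'=\ell(\nu)$ denote the number of indices with $\lambda_j\ge 2$ and setting $s'=l-l'$, I expect to verify that the values indexed by $j\le l'$ match $n_j^{\nu}+s'$ while those indexed by $j>l'$ fill in the interval $\{0,1,\dots,s'-1\}$ introduced by shifting the negative part of $M_\nu$; hence $M^{(1)}=M_\nu+s'$. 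Then~\eqref{eq:shift} yields $k=\ell(\nu)+s'-\ell(\mu)-s=l'+(l-l')-l=0$, which simultaneously shows that the core is empty and that the quotient is exactly the stated pair $(\mu,\nu)$. The only real obstacle is this bookkeeping at the tail of $\nu$; once the compensating shift $s'=l-l'$ is identified, the rest is a direct verification.
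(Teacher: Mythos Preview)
Your argument is correct. The paper itself states this lemma without proof (it is one of three lemmas listed ``for easy referencing'' at the end of Section~\ref{sec:coresandquotients}), so there is no proof to compare against; your approach is precisely the direct verification via the 2-modular decomposition that the paper's construction of $\Phi$ in~\eqref{eq:Phi}--\eqref{eq:twomodulardecomposition} invites, and the bookkeeping with the tail shift $s'=l-l'$ is handled correctly.
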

The generalized Hermite polynomials, which appear in rational solutions of the fourth Painlev\'e equation, are the Wronskian Hermite polynomials associated to partitions whose Young diagram has a rectangular shape~\cite{Buckingham,Clarkson-zeros,Clarkson-PIV,Masoero_Roffelsen,Masoero_Roffelsen-2019,VanAssche}. The core and quotient of such partitions can easily be  deduced.

\begin{lemma}
Any partition $\lambda=(m^n)$ has core and quotient given by
\begin{equation*}
	\bar{\lambda}
		=\begin{cases}
			\emptyset & \text{if $|\lambda|$ even,} \\
			(1) & \text{if $|\lambda|$ odd,}
		\end{cases}
	\qquad \qquad
	(\mu,\nu)
		=\big(
		(\lceil m/2 \rceil^{\lfloor n/2 \rfloor})
		,
		(\lfloor m/2 \rfloor^{\lceil n/2 \rceil})
		\big).
\end{equation*}
\end{lemma}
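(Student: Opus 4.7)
The plan is to directly compute the $2$-modular decomposition of $M_\lambda$ and read off the triple $(\mu,\nu,k)$ from the resulting pair of Maya diagrams via the map $\Phi$. Since $n_i=m+n-i$, the degree vector of $\lambda=(m^n)$ is the set of $n$ consecutive integers $\{m,m+1,\dots,m+n-1\}$, and hence
\begin{equation*}
M_\lambda = \{k\in\mathbb{Z}:k<0\}\cup\{m,m+1,\dots,m+n-1\}.
\end{equation*}
Splitting this block by parity and applying~\eqref{eq:twomodulardecomposition}, each of $M^{(0)}$ and $M^{(1)}$ has the form $\mathbb{Z}_{<0}\cup\{r,r+1,\dots,r+q-1\}$. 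A direct count shows that the non-negative parts have sizes $(e,o)=(\lceil n/2\rceil,\lfloor n/2\rfloor)$ if $m$ is even and $(\lfloor n/2\rfloor,\lceil n/2\rceil)$ if $m$ is odd, while the starting positions are $\lceil m/2\rceil$ for $M^{(0)}$ and $\lfloor m/2\rfloor$ for $M^{(1)}$.

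A Maya diagram of the form $\mathbb{Z}_{<0}\cup\{r,r+1,\dots,r+q-1\}$ is already canonical whenever $r\geq 1$, and in that case represents the rectangular partition $(r^q)$ with zero shift; when $r=0$ (which only arises for $M^{(1)}$ with $m=1$) a leftward shift by $q$ canonicalizes it to the diagram $M_\emptyset$ of the empty partition. In either sub-case one checks that $\ell(\nu_0)+s'=o$, so the formula $k_0=\ell(\nu_0)+s'-\ell(\mu_0)-s$ reduces to $k_0=o-e\in\{-1,0,1\}$. A four-case analysis on the parities of $m$ and $n$ then shows that $k_0=-1$ occurs exclusively when $m$ is even and $n$ is odd; in that sub-case I would invoke the equivalence~\eqref{eq:kand-1-k} to replace the triple $(\mu_0,\nu_0,-1)$ by $(\nu_0,\mu_0,0)$, which restores $k\geq 0$. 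In every case the resulting pair collapses to the stated $(\lceil m/2\rceil^{\lfloor n/2\rfloor},\lfloor m/2\rfloor^{\lceil n/2\rceil})$, and $k=1$ occurs precisely when both $m$ and $n$ are odd, i.e.\ when $|\lambda|=mn$ is odd, yielding $\bar{\lambda}=(1)$ in that case and $\bar{\lambda}=\emptyset$ otherwise.

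The main potential pitfall is tracking the correct ordering of $\mu$ and $\nu$: without the symmetry step in the $m$-even, $n$-odd case one would obtain the pair in the wrong order. The edge case $m=1$, where $M^{(1)}$ starts at position $0$ and requires a nontrivial shift, is an additional bookkeeping check that does not affect the final formula.
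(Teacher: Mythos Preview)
The paper states this lemma without proof, remarking only that the core and quotient ``can easily be deduced.'' Your argument is a correct direct verification via the $2$-modular decomposition of $M_\lambda$ set up in Section~\ref{sec:coresandquotients}, and is exactly the computation the paper's framework invites; the case split on the parities of $m$ and $n$, together with the swap~\eqref{eq:kand-1-k} in the $m$-even, $n$-odd case, is handled correctly and the edge case $m=1$ poses no difficulty.
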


%%% Local Variables: 
%%% mode: latex
%%% TeX-master: "../main"
%%% End: 
 
\section{Factorization of Wronskian Hermite polynomials}\label{sec:2coresandWHP}
In this section, we prove that a Wronskian Hermite polynomial can be factorized as in~\eqref{eq:factorizationHe}. The main idea of the proof is to use the generating recurrence relation for Wronskian Hermite polynomials obtained in~\cite[Theorem~3.1]{Bonneux_Stevens}, which expresses $\He_{\lambda}$ in terms of polynomials of lower degree. Here it is convenient to rephrase the recurrence relation in terms of quotient partitions. Using~\cite[Theorem~3.1 and Proposition~3.5]{Bonneux_Stevens}, we have 
\begin{equation}\label{eq:generatingrecurrence}
	F_{\lambda} \He_{\lambda}(x)
		= \frac{x}{|\lambda|} F_{\lambda}\He'_{\lambda}(x) - (|\lambda|-1) \sum_{(\tilde{\mu},\tilde{\nu})\lessdot(\mu,\nu)} (-1)^{\htt_2(\lambda/\tilde{\lambda})} F_{\tilde{\lambda}} \He_{\tilde{\lambda}}(x)
\end{equation}
for any non-empty partition $\lambda$ with quotient $(\mu,\nu)$ and where $\tilde{\lambda}$ has quotient $(\tilde{\mu},\tilde{\nu})$ and the same core as $\lambda$. In this way, the sum in~\eqref{eq:generatingrecurrence} runs precisely over all partitions $\tilde{\lambda}$ that are obtained by removing one domino tile from $\lambda$. Writing the sum as a sum of predecessors in the lattice $\mathbb{Y}\times \mathbb{Y}$ is, however, more convenient for further analysis than writing it as a sum over predecessors of~$\lambda$ in the Young lattice. 

\begin{theorem}\label{thm:WHdecomposition}
For any partition $\lambda$ with core $\bar{\lambda}$ and quotient $(\mu,\nu)$ we have
\begin{equation}\label{eq:WHdecomposition}
	\He_{\lambda}(x)
		=x^{|\bar{\lambda}|} \, R_{\lambda}(x^2)
\end{equation}
where $R_{\lambda}$ is a monic polynomial of degree $|\mu|+|\nu|$ with non-vanishing constant coefficient
\begin{equation}\label{eq:R0}
	R_{\lambda}(0)
		= (-1)^{h_{\lambda}} \frac{H_{\odd}(\lambda)}{H(\bar{\lambda})}
\end{equation}
and $h_{\lambda}=\htt_2(\lambda/\bar{\lambda})+(|\lambda|-|\bar{\lambda}|)/2$.
\end{theorem}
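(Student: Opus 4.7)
The plan is to prove the theorem by induction on $|\mu|+|\nu|$, the number of dominoes separating $\lambda$ from its core $\bar\lambda$. The main tool is the recurrence~\eqref{eq:generatingrecurrence}, which expresses $\He_\lambda$ in terms of the $\He_{\tilde\lambda}$ for partitions $\tilde\lambda$ obtained from $\lambda$ by removing one domino (equivalently, with quotient $(\tilde\mu,\tilde\nu)\lessdot(\mu,\nu)$ and the same core).

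For the base case $(\mu,\nu)=(\emptyset,\emptyset)$, the partition $\lambda=\bar\lambda$ admits no domino removal, so the sum in~\eqref{eq:generatingrecurrence} is empty and one is left with $|\bar\lambda|\,\He_{\bar\lambda}(x) = x\,\He_{\bar\lambda}'(x)$. Writing $\He_{\bar\lambda}(x)=\sum_n c_n x^n$ and comparing coefficients forces $(|\bar\lambda|-n)c_n=0$, so by monicity $\He_{\bar\lambda}(x)=x^{|\bar\lambda|}$ and $R_{\bar\lambda}\equiv 1$. Since every hook length of a core is odd, $H_{\odd}(\bar\lambda)/H(\bar\lambda)=1$ and $h_{\bar\lambda}=0$, which confirms~\eqref{eq:R0} in this case.

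For the inductive step, I extract from~\eqref{eq:generatingrecurrence} the coefficient-wise relation
\[
c_n = -\frac{|\lambda|(|\lambda|-1)}{(|\lambda|-n)F_\lambda}\sum_{(\tilde\mu,\tilde\nu)\lessdot(\mu,\nu)}(-1)^{\htt_2(\lambda/\tilde\lambda)}F_{\tilde\lambda}\,c_n^{(\tilde\lambda)}\qquad(n\ne|\lambda|),
\]
where $c_n^{(\tilde\lambda)}$ is the $n$-th coefficient of $\He_{\tilde\lambda}$. Since every $\tilde\lambda$ shares the core $\bar\lambda$, the inductive hypothesis yields $c_n^{(\tilde\lambda)}=0$ for $n<|\bar\lambda|$, whence $c_n=0$ for $n<|\bar\lambda|$, so $x^{|\bar\lambda|}$ divides $\He_\lambda$. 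Combining this with the Hermite-inherited parity $\He_\lambda(-x)=(-1)^{|\lambda|}\He_\lambda(x)=(-1)^{|\bar\lambda|}\He_\lambda(x)$ (immediate from $\He_n(-x)=(-1)^n\He_n(x)$ inside the Wronskian, together with~\eqref{eq:sizelambda}) produces the factorization $\He_\lambda(x)=x^{|\bar\lambda|}R_\lambda(x^2)$; monicity of $\He_\lambda$ and $\deg\He_\lambda=|\lambda|$ make $R_\lambda$ monic of degree $|\mu|+|\nu|$.

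It remains to identify $R_\lambda(0)=c_{|\bar\lambda|}$ by specializing $n=|\bar\lambda|$ in the displayed recurrence, using $c_{|\bar\lambda|}^{(\tilde\lambda)}=R_{\tilde\lambda}(0)$. Substituting the inductive value $R_{\tilde\lambda}(0)=(-1)^{h_{\tilde\lambda}}H_{\odd}(\tilde\lambda)/H(\bar\lambda)$ together with $h_\lambda-h_{\tilde\lambda}=\htt_2(\lambda/\tilde\lambda)+1$ (from additivity of $\htt_2$ along domino removals and $|\lambda|-|\tilde\lambda|=2$) collapses every sign to a uniform $-(-1)^{h_\lambda}$. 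Writing $F_{\tilde\lambda}H_{\odd}(\tilde\lambda)=(|\lambda|-2)!/H_{\even}(\tilde\lambda)$ and $H_{\even}(\tilde\lambda)=2^{|\mu|+|\nu|-1}H(\tilde\mu)H(\tilde\nu)$ via~\eqref{eq:Flambdaquotient} reduces everything to the combinatorial identity
\[
\sum_{(\tilde\mu,\tilde\nu)\lessdot(\mu,\nu)}\frac{1}{H(\tilde\mu)H(\tilde\nu)}=\frac{|\mu|+|\nu|}{H(\mu)H(\nu)},
\]
which follows by splitting into the horizontal and vertical domino cases and applying the familiar SYT identity $\sum_{\tilde\mu\lessdot\mu}F_{\tilde\mu}=F_\mu$. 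A direct cancellation of prefactors, using $|\lambda|-|\bar\lambda|=2(|\mu|+|\nu|)$, then yields the claimed value of $R_\lambda(0)$. The main obstacle is the sign bookkeeping linking $\htt_2(\lambda/\tilde\lambda)$ with $h_\lambda$, but once this is traced, the remaining manipulations are routine.
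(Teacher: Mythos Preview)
Your proof is correct and follows essentially the same approach as the paper: both argue by induction on $|\mu|+|\nu|$, use the recurrence~\eqref{eq:generatingrecurrence} to show vanishing of the low-order coefficients (your coefficient extraction $c_n$ is identical to the paper's $\He_\lambda^{(m)}(0)/m!$), invoke the parity of $\He_\lambda$ to obtain the factorization, and then specialize $n=|\bar\lambda|$ and reduce the resulting expression for $R_\lambda(0)$ via~\eqref{eq:Flambdaquotient} to the same combinatorial identity $\sum_{(\tilde\mu,\tilde\nu)\lessdot(\mu,\nu)}\frac{1}{H(\tilde\mu)H(\tilde\nu)}=\frac{|\mu|+|\nu|}{H(\mu)H(\nu)}$. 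The only cosmetic difference is that you derive the base case $\He_{\bar\lambda}(x)=x^{|\bar\lambda|}$ directly from the empty-sum recurrence, whereas the paper cites it as a known lemma.
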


\begin{remark}
Since $|\bar{\lambda}|=k(k+1)/2$, we note that this factorization proves the observation in~\cite{Felder_Hemery_Veselov}. There it is claimed that the multiplicity of the zero at the origin  equals $(p-q)(p-q+1)/2$ where $p$, respectively $q$, denotes the number of odd, respectively even, elements in the degree vector. One easily shows that $\lvert \bar{\lambda} \rvert = (p-q)(p-q+1)/2$, for example, by using induction on the number of domino tiles that are added to the core. Namely, it is straightforward to observe that adding a domino tile to a partition leaves the number $p-q$ invariant, except the case were a horizontal domino is added as a new row. In that case $p-q$ changes to $q-p-1$. Thus if $p\ge q$ then $k=p-q$, otherwise $k=q-p-1$. This is directly related to~\eqref{eq:kand-1-k}.
\end{remark}

\begin{remark}
For any partition $\lambda$ with core $\bar{\lambda}$, the skew diagram $\lambda/\tilde{\lambda}$ can be tiled with dominoes. The number of dominoes equals $(|\lambda|-|\bar{\lambda}|)/2$ and the parity of the number of vertical dominoes is given by $\htt_2(\lambda/\bar{\lambda})$. Therefore, the parity of the quantity $h_\lambda$, defined in Theorem~\ref{thm:WHdecomposition}, equals the parity of the number of horizontal dominoes in a tiling of $\lambda/\tilde{\lambda}$.
\end{remark}

\begin{proof}[Proof of Theorem \ref{thm:WHdecomposition}]
We prove the theorem by induction on $\lvert \mu \rvert + \lvert \nu \rvert$. 

For $|\mu|+|\nu|=0$ we have $\lambda=\bar{\lambda}=(k,k-1,\dots,2,1)$ for some integer $k\geq0$, and so $\He_{\lambda}(x)=x^{|\lambda|}$; see Lemma 4.2 in~\cite{Bonneux_Stevens}. Hence, if we define $R_{\lambda}(x)=1$, then~\eqref{eq:WHdecomposition} holds. Since all hook lengths for the partition $(k,k-1,\dots,2,1)$ are odd,~\eqref{eq:R0} is also satisfied.

Now take $|\mu|+|\nu|>0$ and consider the $m^\textrm{th}$-derivative of both sides of~\eqref{eq:generatingrecurrence} for some integer $m\geq0$. Then, evaluating both sides at zero gives
\begin{equation*}
	F_{\lambda} \He^{(m)}_{\lambda}(0)
		= \frac{m}{|\lambda|} F_{\lambda}\He^{(m)}_{\lambda}(0) - (|\lambda|-1) \sum_{(\tilde{\mu},\tilde{\nu})\lessdot(\mu,\nu)} (-1)^{\htt(\lambda/\tilde{\lambda})} F_{\tilde{\lambda}} \He^{(m)}_{\tilde{\lambda}}(0).
\end{equation*}
Combining terms and subsequently dividing both sides by $m!$ leads to the equality
\begin{equation}\label{eq:WHdecompositionproof1}
	F_{\lambda}\frac{\He^{(m)}_{\lambda}(0)}{m!}
		= -\frac{|\lambda|(|\lambda|-1)}{|\lambda|-m} \sum_{(\tilde{\mu},\tilde{\nu})\lessdot(\mu,\nu)} (-1)^{\htt_2(\lambda/\tilde{\lambda})} F_{\tilde{\lambda}} \frac{\He^{(m)}_{\tilde{\lambda}}(0)}{m!}.
\end{equation}
By applying the induction hypothesis, we conclude that for $0\leq m<\lvert \bar{\lambda} \rvert$, all terms in the sum of~\eqref{eq:WHdecompositionproof1} are zero, since the core of $\tilde{\lambda}$ is also $\bar{\lambda}$. Hence, for all such $m$, we have $\He_\lambda^{(m)}(0)=0$, and so the multiplicity of the zero of $\He_\lambda$ at the origin is at least $\lvert \bar{\lambda} \rvert$. Moreover, it is well-known that the Wronskian Hermite polynomial is an even or odd polynomial depending on its degree; see, for example,~\cite{GomezUllate_Grandati_Milson-Hermite}. Therefore  $\He_\lambda$ can be decomposed as in~\eqref{eq:WHdecomposition}. By evaluating the total degree, we need to have $\lvert \lambda \rvert = \lvert \bar{\lambda} \rvert + 2\deg (R_\lambda)$, and so by~\eqref{eq:sizelambda}, we have $\deg(R_\lambda)=\lvert \mu \rvert + \lvert \nu \rvert$. Since $\He_\lambda$ is monic, $R_\lambda$ is also monic. 

The only thing that is now left to prove is that $R_\lambda(0)$ satisfies~\eqref{eq:R0}. For this, we set $m=\lvert \bar{\lambda} \rvert$ in~\eqref{eq:WHdecompositionproof1} and use the induction hypothesis for $R_{\tilde{\lambda}}(0)$. This then yields
\begin{equation}\label{eq:WHdecompositionproof2}
	F_{\lambda} R_{\lambda}(0)
		= (-1)^{h_\lambda} \frac{|\lambda|(|\lambda|-1)}{|\lambda|-|\bar{\lambda}|}
		\sum_{(\tilde{\mu},\tilde{\nu})\lessdot(\mu,\nu)} F_{\tilde{\lambda}} \frac{H_{\odd}(\tilde{\lambda})}{H(\bar{\lambda})} 
\end{equation} 
because 
\begin{equation*}
	1+\htt_2(\lambda/\tilde{\lambda})+h_{\tilde{\lambda}} \equiv h_{\lambda}  \mod 2.
\end{equation*}
Rewriting~\eqref{eq:WHdecompositionproof2} using~\eqref{eq:sizelambda} and~\eqref{eq:Flambdaquotient} leads to
\begin{equation*}
	R_{\lambda}(0)
		=  (-1)^{h_\lambda} \frac{H_{\odd}(\lambda)}{H(\bar{\lambda})}
	\frac{H(\mu) H(\nu)}{|\mu|+|\nu|} \sum_{(\tilde{\mu},\tilde{\nu})\lessdot(\mu,\nu)} \frac{1}{H({\tilde{\mu}}) H(\tilde{\nu})}.
\end{equation*}
Finally, we see that the right-hand side is non-zero and equal to the expression in~\eqref{eq:WHdecomposition} because
\begin{equation*}
	\sum_{(\tilde{\mu},\tilde{\nu})\lessdot(\mu,\nu)} \frac{1}{H({\tilde{\mu}}) H(\tilde{\nu})}
		= \frac{1}{H(\nu) \lvert \tilde{\mu} \rvert!}\sum_{\tilde{\mu} \lessdot \mu} F_{\tilde{\mu}} + \frac{1}{H(\mu) \lvert \tilde{\nu} \rvert!}\sum_{\tilde{\nu} \lessdot \nu} F_{\tilde{\nu}} 
		= \frac{|\mu|+|\nu|}{H({\mu}) H(\nu)}
\end{equation*}
where we have used~\eqref{eq:Flambdahooklengths} several times. This concludes the proof.
\end{proof}

\begin{corollary}\label{cor:int2}
For any partition $\lambda$ with core $\bar{\lambda}$ we have that $H_{\odd}(\lambda)/H(\bar{\lambda}) \in \mathbb{Z}$.
\end{corollary}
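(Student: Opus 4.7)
The plan is to read the corollary directly off of Theorem~\ref{thm:WHdecomposition}, combined with the known integrality of the coefficients of $\He_{\lambda}$. First I would recall from the introduction (citing \cite{Bonneux_Hamaker_Stembridge_Stevens}) that, under the normalization~\eqref{eq:WHP}, every Wronskian Hermite polynomial $\He_{\lambda}$ has integer coefficients. Since the factorization~\eqref{eq:WHdecomposition} expresses $\He_{\lambda}(x) = x^{|\bar\lambda|} R_{\lambda}(x^2)$, the coefficients of the remainder polynomial $R_{\lambda}$ are a subset of the coefficients of $\He_{\lambda}$, so $R_{\lambda}$ itself has integer coefficients. In particular, $R_{\lambda}(0) \in \mathbb{Z}$.

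Next I would invoke formula~\eqref{eq:R0} from Theorem~\ref{thm:WHdecomposition}, which asserts
\begin{equation*}
    R_{\lambda}(0) = (-1)^{h_{\lambda}} \frac{H_{\odd}(\lambda)}{H(\bar{\lambda})}.
\end{equation*}
Combining this with the integrality of $R_{\lambda}(0)$ gives immediately that $H_{\odd}(\lambda)/H(\bar{\lambda})$ is an integer (up to the sign $(-1)^{h_\lambda}$, which does not affect integrality), completing the proof.

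There is essentially no obstacle at this stage, since Theorem~\ref{thm:WHdecomposition} has already done the work: the non-trivial content is that the constant term of $R_{\lambda}$ equals $\pm H_{\odd}(\lambda)/H(\bar{\lambda})$, and integrality of that constant term is inherited from the integrality of the coefficients of $\He_{\lambda}$. If one wished to make the argument fully self-contained one could, alternatively, prove $R_{\lambda}(0)\in\mathbb{Z}$ directly by induction on $|\mu|+|\nu|$ using the recurrence~\eqref{eq:generatingrecurrence} evaluated at $x=0$ (which is precisely the recursion~\eqref{eq:WHdecompositionproof2} derived in the proof of Theorem~\ref{thm:WHdecomposition}); but invoking the already-established integrality of $\He_{\lambda}$ is the cleanest route.
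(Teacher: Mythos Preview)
Your proposal is correct and follows essentially the same approach as the paper's own proof: the paper also deduces the corollary directly from Theorem~\ref{thm:WHdecomposition} together with the integrality of the coefficients of $\He_{\lambda}$ established in~\cite{Bonneux_Hamaker_Stembridge_Stevens}. Your additional remark that the coefficients of $R_{\lambda}$ are literally among those of $\He_{\lambda}$ makes the inference of $R_{\lambda}(0)\in\mathbb{Z}$ fully explicit, but the argument is the same.
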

\begin{proof}
This follows directly from Theorem~\ref{thm:WHdecomposition} and the property that $\He_{\lambda}(x)\in\mathbb{Z}[x]$ for any partition $\lambda$, which is proven in~\cite{Bonneux_Hamaker_Stembridge_Stevens}.
\end{proof}

The result of Corollary~\ref{cor:int2} trivially extends to $H(\lambda)/H(\bar{\lambda}) \in \mathbb{Z}$. Moreover, in Corollary~\ref{cor:intp} we give the natural extension to the $p$-core cases for $p>2$. 

\begin{remark}
As mentioned at the end of Section \ref{sec:introduction}, both corollaries also follow from Theorem~4.4 in~\cite{Bessenrodt} as mentioned in Remark~4.5. In \cite{Bessenrodt}, the inclusion of multisets of hooklengths is used, whereas we use the integrality of the coefficients of certain polynomials. Additionally, Theorem~4.4 in~\cite{Bessenrodt} gives an explicit way to calculate the integer valued ratio.
\end{remark}

If $\lambda$ is a core, that is $\lambda=\bar{\lambda}$, we  have $\He_{\lambda}(x)=x^{|\lambda|}$, and so recover the result of Lemma 4.2 in~\cite{Bonneux_Stevens}. For an arbitrary partition $\lambda$, the factorization can be written as
\begin{equation}\label{eq:decomposition}
	\He_{\lambda}(x)
		= \He_{\bar{\lambda}}(x) R_{\lambda}(x^2)
\end{equation}
where $\bar{\lambda}$ denotes the core of $\lambda$. In Section~\ref{sec:coefficients} we establish an explicit formula for the coefficients of~$R_{\lambda}$. As an intermediate result, we present the following corollary.

\begin{corollary}\label{cor:Rlambda}
For any partition $\lambda$ with quotient $(\mu,\nu)$ we have
\begin{equation*}
	R_{\lambda}(x)
		= (-1)^{|\mu|+|\nu|} \, R_{\lambda'}(-x).
\end{equation*}
where $\lambda'$ denotes the conjugated partition of $\lambda$. In particular, if the quotient is of the form $(\mu,\mu')$ for some $\mu$, then $R_{\lambda}$ is an even polynomial.
\end{corollary}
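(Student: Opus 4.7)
The plan is to proceed by induction on $|\mu|+|\nu|$, via a recurrence for $R_\lambda$ extracted from the generating recurrence~\eqref{eq:generatingrecurrence}. The base case $|\mu|+|\nu|=0$ is immediate: here $\lambda=\bar{\lambda}$ is a staircase partition, hence self-conjugate, so $\lambda=\lambda'$ and $R_\lambda=R_{\lambda'}=1$, matching both sides. For the inductive step I would substitute the factorization $\He_\lambda(x)=x^{|\bar{\lambda}|}R_\lambda(x^2)$ of Theorem~\ref{thm:WHdecomposition} into~\eqref{eq:generatingrecurrence}, divide by $x^{|\bar{\lambda}|}$, and set $y=x^2$. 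Using $|\lambda|-|\bar{\lambda}|=2(|\mu|+|\nu|)$, the recurrence reduces to
\begin{equation*}
(|\mu|+|\nu|)\,F_\lambda R_\lambda(y) - y\,F_\lambda R_\lambda'(y) = -\frac{|\lambda|(|\lambda|-1)}{2}\sum_{(\tilde{\mu},\tilde{\nu})\lessdot(\mu,\nu)} (-1)^{\htt_2(\lambda/\tilde{\lambda})}\,F_{\tilde{\lambda}}\,R_{\tilde{\lambda}}(y).
\end{equation*}

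I would then apply this same recurrence to $\lambda'$, which has the same core $\bar{\lambda}$ and quotient $(\nu',\mu')$. Three observations enable the comparison: conjugation induces a bijection between predecessors of $(\mu,\nu)$ and of $(\nu',\mu')$; $F_{\tilde{\lambda}'}=F_{\tilde{\lambda}}$ since hook lengths are invariant under conjugation; and a single domino removal switches between horizontal and vertical under conjugation, so $\htt_2(\lambda'/\tilde{\lambda}')\equiv\htt_2(\lambda/\tilde{\lambda})+1\pmod{2}$. Feeding the induction hypothesis $R_{\tilde{\lambda}'}(y)=(-1)^{|\mu|+|\nu|-1}R_{\tilde{\lambda}}(-y)$ into the recurrence for $R_{\lambda'}$ and then substituting $y\mapsto -y$, I would show that $S_\lambda(y):=(-1)^{|\mu|+|\nu|}R_{\lambda'}(-y)$ satisfies exactly the recurrence displayed above. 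The operator $dP(y)-yP'(y)$ on a polynomial of degree $d:=|\mu|+|\nu|$ multiplies the coefficient of $y^k$ by $d-k$, so its image has no $y^d$-component; therefore the recurrence determines $P$ from its right-hand side up to the leading coefficient. Since both $R_\lambda$ and $S_\lambda$ are monic of degree $d$, they coincide, completing the induction.

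The second statement is then a direct consequence: if $\nu=\mu'$, then the quotient of $\lambda'$ is $(\mu'',\mu')=(\mu,\mu')=(\mu,\nu)$ and its core equals that of $\lambda$, so $\lambda=\lambda'$ by the bijectivity of~\eqref{eq:Phi}. The identity then collapses to $R_\lambda(x)=(-1)^{2|\mu|}R_\lambda(-x)=R_\lambda(-x)$, i.e., $R_\lambda$ is even. I expect the main obstacle to be the careful sign bookkeeping: the parity flip from the height, the $(-1)^{|\mu|+|\nu|-1}$ from the induction hypothesis, and the $y\mapsto -y$ substitution must conspire to eliminate the extra signs coming from the $\lambda'$-recurrence; once this is done, the uniqueness argument for the first-order linear ODE is routine.
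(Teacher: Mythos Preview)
Your proof is correct, but it takes a genuinely different route from the paper. The paper's argument is a one-liner: it invokes the well-known identity $\He_{\lambda}(x)=i^{|\lambda|}\He_{\lambda'}(-ix)$ (cited from the literature) and simply plugs in the factorization $\He_{\lambda}(x)=x^{|\bar{\lambda}|}R_{\lambda}(x^2)$ from Theorem~\ref{thm:WHdecomposition} on both sides; the factor $i^{|\lambda|}(-i)^{|\bar{\lambda}|}=i^{|\lambda|-|\bar{\lambda}|}=(-1)^{|\mu|+|\nu|}$ drops out immediately.

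Your approach instead reproves the needed consequence of that identity from scratch, by induction via the recurrence~\eqref{eq:generatingrecurrence}. The mechanics are sound: the derived recurrence for $R_\lambda$ is correct, conjugation does bijectively match predecessors with a parity flip in height, $F_{\tilde{\lambda}'}=F_{\tilde{\lambda}}$ holds, and your uniqueness argument via the kernel of $P\mapsto dP-yP'$ on polynomials of degree $\le d$ (which is exactly $\mathbb{C}\cdot y^d$) is valid since both $R_\lambda$ and $S_\lambda$ are monic of degree $d$. What the paper's route buys is brevity and transparency; what your route buys is self-containment (no appeal to the external $\He_\lambda\leftrightarrow\He_{\lambda'}$ identity) and an illustration that the recurrence machinery alone suffices. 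Either way the ``in particular'' clause follows identically.
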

\begin{proof}
It is well-known that $\He_{\lambda}(x) = i^{|\lambda|} \He_{\lambda'}(-ix)$, see for example~\cite{Curbera_Duran}, and therefore the result can  be obtained directly from Theorem~\ref{thm:WHdecomposition}.
\end{proof}

We believe that the converse is also true and we therefore offer the following conjecture. For this, we used the computer software Maple\textsuperscript{{\tiny TM}} to check that the statement indeed holds for all partitions of size at most~35.

\begin{conjecture}
The remainder polynomial $R_\lambda$, as defined in \eqref{eq:decomposition}, is an even polynomial if and only if $\lambda$ is self-conjugate.
\end{conjecture}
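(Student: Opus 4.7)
The forward direction is essentially already in the paper. If $\lambda$ is self-conjugate, then the quotient of $\lambda$ coincides with the quotient of $\lambda'$, so $(\mu,\nu) = (\nu',\mu')$ and in particular the quotient has the form $(\mu,\mu')$. Corollary~\ref{cor:Rlambda} then gives that $R_\lambda$ is even. I therefore concentrate the plan on the converse.

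My first step is a symmetry reduction. Suppose $R_\lambda$ is even. Applying Corollary~\ref{cor:Rlambda} with $-x$ in place of $x$ gives $R_\lambda(-x) = (-1)^{|\mu|+|\nu|} R_{\lambda'}(x)$, and the hypothesis $R_\lambda(x) = R_\lambda(-x)$ then yields $R_\lambda = (-1)^{|\mu|+|\nu|} R_{\lambda'}$. Since both polynomials are monic of common degree $|\mu|+|\nu|=|\mu'|+|\nu'|$, this forces $|\mu|+|\nu|$ to be even and $R_\lambda = R_{\lambda'}$. Because $\lambda$ and $\lambda'$ share a common $2$-core, Theorem~\ref{thm:WHdecomposition} upgrades the equality of remainder polynomials to $\He_\lambda = \He_{\lambda'}$. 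The converse therefore reduces to showing that the map $\lambda \mapsto \He_\lambda$ is injective on partitions.

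To attack this injectivity I would use the coefficient formulas of Section~\ref{sec:coefficients}. Proposition~\ref{prop:subleadingcoeff} expresses the subleading coefficient of $R_\lambda$ in terms of the content sum $\sum_{(i,j)\in\lambda}(j-i)$; since conjugation negates contents, $R_\lambda = R_{\lambda'}$ already forces this content sum to vanish, which is necessary but not sufficient on its own. To extract $\lambda = \lambda'$ in full, one iterates: Theorem~\ref{thm:coefficients} writes the coefficient of $x^{d-l}$ in $R_\lambda$ as a weighted sum indexed by pairs $(\tilde{\mu},\tilde{\nu}) <_l (\mu,\nu)$ in the product Young lattice, and the analogous sum for $R_{\lambda'}$ runs over pairs below $(\nu',\mu')$. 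An induction on $|\mu|+|\nu|$, propagated via the recurrence~\eqref{eq:generatingrecurrence}, should force $(\mu,\nu) = (\nu',\mu')$ and hence $\lambda = \lambda'$.

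The main obstacle is making this inductive matching rigorous: a priori, two different ordered pairs of partitions could conspire to produce the same weighted sum of hook-length ratios at each level. The cleanest route I foresee is to recognise $\He_\lambda$ as a specialization of the Schur function $s_\lambda$ via the power-sum homomorphism introduced in~\cite{Bonneux_Hamaker_Stembridge_Stevens}, and to exploit that $x$ remains a free variable under this specialization; a triangularity argument in the dominance order on partitions should then give injectivity of the map $s_\lambda \mapsto \He_\lambda$, thereby closing the proof. Verifying this injectivity, although conceptually natural, is the chief technical hurdle and is the step I expect to require the most care.
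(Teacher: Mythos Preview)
The statement you are attempting to prove is a \emph{conjecture} in the paper, not a theorem: the authors explicitly present it as open, noting only that it has been verified by computer for all partitions of size at most $35$. There is therefore no proof in the paper to compare your proposal against.

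Your reduction is correct and clean: evenness of $R_\lambda$ together with Corollary~\ref{cor:Rlambda} and monicity force $R_\lambda=R_{\lambda'}$, hence $\He_\lambda=\He_{\lambda'}$, so the conjecture is equivalent to the special case of injectivity asserting that $\He_\lambda=\He_{\lambda'}$ implies $\lambda=\lambda'$. However, the step you identify as ``the chief technical hurdle'' is not merely a technicality---it \emph{is} the conjecture. Injectivity of $\lambda\mapsto\He_\lambda$ is not established anywhere in the paper or in~\cite{Bonneux_Hamaker_Stembridge_Stevens}, and your proposed route via the power-sum specialization $p_1=x$, $p_2=-1$, $p_k=0$ for $k\ge3$ is unlikely to yield it by a simple triangularity argument: this specialization collapses the infinite-dimensional space of symmetric functions to $\mathbb{C}[x]$, and for fixed $|\lambda|=n$ the images $\He_\lambda$ all live in the same $\lfloor n/2\rfloor$-dimensional affine space of monic polynomials with the correct parity, while the number of partitions of $n$ grows much faster. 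There is no dominance-order unitriangularity to exploit after specialization, because the change-of-basis coefficients (the characters $a(\lambda,j)$ in Theorem~\ref{thm:WHcoeff}) are not triangular in any standard partial order on partitions of $n$. The paper itself flags that $r_{\lambda,1}=0$ can occur for non-self-conjugate $\lambda$ (e.g.\ $\lambda=(5^2,2,1^3)$), so the first nontrivial coefficient already fails to separate $\lambda$ from $\lambda'$ in general; any proof must use the full collection of coefficients simultaneously, and no mechanism for doing so is currently known.
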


If the conjecture is true, then it states that only for self-conjugate partitions we have 
\begin{equation*}
	\He_{\lambda}(x) 
		= x^{|\bar{\lambda}|} \widetilde{R}(x^4)
\end{equation*}
for some polynomial $\widetilde{R}$ with non-zero constant term and where~$\bar{\lambda}$ is the core of~$\lambda$. A subclass of these polynomials is of main interest in~\cite{Carr_Dorey_Dunning}. In that paper, the Wronskian Hermite polynomials associated with all self-conjugate partitions that have empty core are studied. It turns out that this class of polynomials labels the Schr\"odinger equations describing the excited states of the ordinary differential equation / integrable model correspondence for the untwisted, massless sine-Gordon model at its free fermion point. Partition data also plays a key r\^ole in establishing some of the results in \cite{Carr_Dorey_Dunning}. 

\begin{remark}
For Hermite polynomials we have
\begin{align}
	\He_n(0)
		&= (-1)^{n/2} \, (n-1)!! \quad \text{if $n$ even} 
	\label{eq:He0} \\
	\He'_n(0)
		&= (-1)^{(n-1)/2} \, n!! \qquad \vspace{-300em}\text{if $n$ odd}
	\label{eq:He'0}
\end{align}
where for any positive odd integer $n!!=1\cdot3\cdot5 \cdots n$. As $\He_{(n)}(x)=\He_n(x)$ for all $n\geq0$, we now can interpret~\eqref{eq:He0} and~\eqref{eq:He'0} in terms of the odd hook lengths of the trivial partition~$(n)$. This then generalizes to arbitrary partitions as stated in \eqref{eq:R0}.
\end{remark}

%%% Local Variables: 
%%% mode: latex
%%% TeX-master: "../main"
%%% End: 
 
\section{Coefficients of Wronskian Hermite polynomials}\label{sec:coefficients}
In this section we obtain expressions for all coefficients of the Wronskian Hermite polynomials in terms of partition data.  We consider the factorization of $\He_{\lambda}$ given in~\eqref{eq:WHdecomposition} and write the remainder polynomial as
\begin{equation}\label{eq:Rcoeff}
	R_{\lambda}(x)
		= \sum_{j=0}^{|\mu|+|\nu|} r_{\lambda,j} \, x^{|\mu|+|\nu|-j}
\end{equation}  
where $r_{\lambda,0}=1$ and $r_{\lambda,|\mu|+|\nu|}$ equals the right-hand side of~\eqref{eq:R0}. The recurrence relation~\eqref{eq:generatingrecurrence} for Wronskian Hermite polynomials directly translates to the following recurrence relation for the coefficients $r_{\lambda,j}$ of the remainder polynomial. We use it to prove several interpretations for the coefficients in the subsequent sections.
\begin{proposition}
For any partition $\lambda$ with quotient $(\mu,\nu)$  we have
\begin{equation}\label{eq:recurrencecoefficients}
	F_{\lambda} r_{\lambda,j}
		= -\frac{|\lambda|(|\lambda|-1)}{2j} \sum_{(\tilde{\mu},\tilde{\nu})\lessdot(\mu,\nu)} (-1)^{\htt_2(\lambda/\tilde{\lambda})} F_{\tilde{\lambda}} r_{\tilde{\lambda},j-1}
\end{equation}
for $j=1,2,\dots,\lvert \mu \rvert + \lvert \nu \rvert$ and where $\tilde{\lambda}$ denotes the partition with quotient $(\tilde{\mu},\tilde{\nu})$ and the same core as $\lambda$.
\end{proposition}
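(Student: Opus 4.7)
The plan is to substitute the factorization from Theorem~\ref{thm:WHdecomposition} into the generating recurrence~\eqref{eq:generatingrecurrence} and then extract the coefficient of an appropriate monomial. Writing
\begin{equation*}
\He_{\lambda}(x)=x^{|\bar{\lambda}|}R_{\lambda}(x^{2})=\sum_{j=0}^{|\mu|+|\nu|} r_{\lambda,j}\,x^{|\lambda|-2j},
\end{equation*}
where the last equality uses $|\lambda|=|\bar{\lambda}|+2(|\mu|+|\nu|)$ from~\eqref{eq:sizelambda}, we see that $r_{\lambda,j}$ is precisely the coefficient of $x^{|\lambda|-2j}$ in $\He_{\lambda}$. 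Crucially, any $\tilde{\lambda}$ appearing in the sum in~\eqref{eq:generatingrecurrence} has the same core $\bar{\lambda}$ as $\lambda$ and size $|\tilde{\lambda}|=|\lambda|-2$, so
\begin{equation*}
\He_{\tilde{\lambda}}(x)=\sum_{i=0}^{|\tilde{\mu}|+|\tilde{\nu}|} r_{\tilde{\lambda},i}\,x^{|\tilde{\lambda}|-2i}=\sum_{i\geq 0} r_{\tilde{\lambda},i}\,x^{|\lambda|-2(i+1)},
\end{equation*}
and the coefficient of $x^{|\lambda|-2j}$ in $\He_{\tilde{\lambda}}$ is therefore $r_{\tilde{\lambda},j-1}$.

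Next, I would compute $\frac{x}{|\lambda|} F_{\lambda}\He'_{\lambda}(x)$, whose coefficient of $x^{|\lambda|-2j}$ is $\frac{|\lambda|-2j}{|\lambda|}F_{\lambda}r_{\lambda,j}$. Matching coefficients of $x^{|\lambda|-2j}$ in~\eqref{eq:generatingrecurrence} yields
\begin{equation*}
F_{\lambda}r_{\lambda,j}=\frac{|\lambda|-2j}{|\lambda|}F_{\lambda}r_{\lambda,j}-(|\lambda|-1)\sum_{(\tilde{\mu},\tilde{\nu})\lessdot(\mu,\nu)}(-1)^{\htt_{2}(\lambda/\tilde{\lambda})}F_{\tilde{\lambda}}r_{\tilde{\lambda},j-1}.
\end{equation*}
Moving the first term on the right to the left multiplies $F_{\lambda}r_{\lambda,j}$ by $\frac{2j}{|\lambda|}$, and rearranging gives exactly~\eqref{eq:recurrencecoefficients}. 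For the range of $j$, note that for $j=0$ the equation is the vacuous identity $F_{\lambda}=F_{\lambda}$ (both sides contribute the leading coefficient $1$), so the recurrence is meaningful and well-defined exactly for $j=1,2,\dots,|\mu|+|\nu|$, matching the stated range.

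There is no real obstacle here; the argument is purely a bookkeeping one. The only thing that needs a little care is the observation that the core is preserved in the sum, which is what guarantees that the powers of $x$ in $\He_{\lambda}$ and in each $\He_{\tilde{\lambda}}$ lie in the same residue class modulo $2$ and can be matched cleanly.
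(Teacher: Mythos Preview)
Your proof is correct and follows exactly the approach the paper indicates: the paper simply states that the proposition ``is an elementary rewriting of~\eqref{eq:generatingrecurrence}, using~\eqref{eq:WHdecomposition} and~\eqref{eq:Rcoeff}'' and omits the details. Your coefficient-matching argument is precisely that rewriting, carried out explicitly and correctly.
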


We omit the proof since it is an elementary rewriting of~\eqref{eq:generatingrecurrence}, using~\eqref{eq:WHdecomposition} and~\eqref{eq:Rcoeff}, but note that~\eqref{eq:recurrencecoefficients} generates all coefficients if one uses the knowledge that $r_{\lambda,0}=1$ for all $\lambda$.

\subsection{Coefficients in terms of irreducible characters of representations}\label{sec:coeffcharacters}
It is well-known that the Hermite polynomials have the explicit expansion
\begin{equation*}
	\He_n(x)
		=\sum_{j=0}^{\lfloor n/2 \rfloor} (-1)^j \frac{n!}{j! (n-2j)! 2^j} x^{n-2j}
\end{equation*}
for all $n\geq 0$; see for example formula~(5.5.4) in~\cite{Szego}. We offer a generalization of this expansion for Wronskian Hermite polynomials.

\begin{theorem}\label{thm:WHcoeff}
For any partition $\lambda$ we have
\begin{equation}\label{eq:WHcoeff}
	F_\lambda \He_\lambda(x)
		=\sum_{j=0}^{\lfloor\lvert \lambda \rvert/2 \rfloor} (-1)^j \frac{\lvert \lambda \rvert!}{j! (\lvert \lambda \rvert-2j)! 2^j} a(\lambda,j) x^{\lvert \lambda \rvert-2j}
\end{equation}
where $a(\lambda,j)$ is the character of the conjugacy class of the cycle type $(2^j,1^{\lvert \lambda \rvert-2j})$ of the irreducible representation associated to the partition $\lambda$ of the symmetric group $S_{|\lambda|}$. 
\end{theorem}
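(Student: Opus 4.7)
The plan is to induct on $|\lambda|$ and translate the recurrence \eqref{eq:recurrencecoefficients} for the remainder-polynomial coefficients $r_{\lambda,j}$ into the Murnaghan--Nakayama recursion for the characters $a(\lambda,j)$. Let $c_{\lambda,j}$ denote the coefficient of $x^{|\lambda|-2j}$ in $F_\lambda \He_\lambda(x)$. By Theorem~\ref{thm:WHdecomposition} one has $c_{\lambda,j} = F_\lambda r_{\lambda,j}$ for $0 \le j \le |\mu|+|\nu|$ (where $(\mu,\nu)$ is the quotient of $\lambda$), and $c_{\lambda,j}=0$ otherwise. The statement becomes $c_{\lambda,j} = (-1)^j\, |\lambda|!\, a(\lambda,j) / (j!\,(|\lambda|-2j)!\, 2^j)$.

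For the base case $j=0$, $\He_\lambda$ is monic, so $c_{\lambda,0}=F_\lambda$, while $a(\lambda,0) = \chi^\lambda_{(1^{|\lambda|})}$ equals the dimension of the irreducible representation associated with $\lambda$, which is precisely $F_\lambda$ by \eqref{eq:Flambdahooklengths}. For the inductive step $j \ge 1$, the Murnaghan--Nakayama rule applied by stripping one 2-cycle from the cycle type $(2^{j},1^{|\lambda|-2j})$ yields
\[
a(\lambda,j) \;=\; \sum_{\tilde\lambda \,\lessdot_{2}\, \lambda} (-1)^{\htt_2(\lambda/\tilde\lambda)}\, a(\tilde\lambda,j-1),
\]
where $\tilde\lambda \lessdot_{2}\lambda$ means that $\tilde\lambda$ is obtained from $\lambda$ by removing a single border strip of size~2, i.e.\ a domino. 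The crucial observation is that this index set is exactly the one appearing in \eqref{eq:recurrencecoefficients}: as recalled in Section~\ref{sec:preliminaries}, domino removal from $\lambda$ corresponds to a covering relation $(\tilde\mu,\tilde\nu) \lessdot (\mu,\nu)$ in $\mathbb{Y}\times\mathbb{Y}$ that leaves the core unchanged, and $\htt_2$ is preserved under this correspondence. Substituting the induction hypothesis for $F_{\tilde\lambda} r_{\tilde\lambda,j-1}$ into \eqref{eq:recurrencecoefficients} and simplifying the prefactor
\[
\frac{|\lambda|(|\lambda|-1)}{2j}\cdot\frac{(|\lambda|-2)!}{(j-1)!\,(|\lambda|-2j)!\,2^{j-1}} \;=\; \frac{|\lambda|!}{j!\,(|\lambda|-2j)!\,2^{j}}
\]
delivers the desired formula, the extra minus sign in \eqref{eq:recurrencecoefficients} combining with $(-1)^{j-1}$ to produce $(-1)^{j}$.

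The main obstacle is reconciling the ranges of summation: the recurrence \eqref{eq:recurrencecoefficients} only produces nonzero $r_{\lambda,j}$ for $j \le |\mu|+|\nu|$, whereas the theorem's sum runs up to $\lfloor|\lambda|/2\rfloor$, which is strictly larger when the core is non-empty. To close the argument one must verify that $a(\lambda,j)$ vanishes for $j > |\mu|+|\nu|$; this follows by iterating the Murnaghan--Nakayama recursion, since removing more than $|\mu|+|\nu|$ dominoes from $\lambda$ is impossible: after the maximal number of removals one reaches the staircase 2-core, from which no domino can be deleted. The vanishing of $a(\lambda,j)$ in this regime matches $c_{\lambda,j}=0$, completing the induction.
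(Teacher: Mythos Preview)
Your proof is correct and follows essentially the same route as the paper's. The paper iterates \eqref{eq:recurrencecoefficients} $j$ times in one go to obtain $F_\lambda r_{\lambda,j}=(-1)^j\frac{|\lambda|!}{(|\lambda|-2j)!\,j!\,2^j}\sum_{(\tilde\mu,\tilde\nu)<_j(\mu,\nu)}(-1)^{\htt_2(\lambda/\tilde\lambda)}F_{\tilde\lambda}$ and then cites the iterated Murnaghan--Nakayama identity from Macdonald/Stanley to recognise the sum as $a(\lambda,j)$; you instead match the single-step recurrence \eqref{eq:recurrencecoefficients} against the single-step Murnaghan--Nakayama rule and induct. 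These are the same argument unrolled differently, and your explicit treatment of the range $|\mu|+|\nu|<j\le\lfloor|\lambda|/2\rfloor$ makes a point the paper leaves implicit. (A small wording quibble: you announce induction on $|\lambda|$ but then phrase the base/step in terms of $j$; this is harmless since passing to $\tilde\lambda$ decreases $|\lambda|$ by $2$, but it would read more cleanly as induction on $j$ with the hypothesis invoked for the smaller partition $\tilde\lambda$.)
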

 
Before explaining the proof of this theorem, we give a very brief introduction to the character theory of the symmetric group, since character values appear in~\eqref{eq:WHcoeff}. The symmetric group~$S_n$ is the group of permutations on $n$ elements. Its representation theory is classical and it is well-known that the \textit{irreducible representations} of $S_n$ are labelled by the partitions $\lambda$ of size $\lvert \lambda \rvert = n$. Much (if not all essential) information of such  an irreducible representation is contained in its \textit{character}, which is a function $\chi_\lambda: S_n \rightarrow \mathbb{Z}$ that has the property of only depending on conjugacy classes: if $\sigma'= \rho \sigma \rho^{-1}$ for $\sigma,\sigma',\rho \in S_n$, then $\chi_\lambda(\sigma)=\chi_\lambda(\sigma')$. It is an easy exercise to show that $\sigma$ and $\sigma'$ are conjugate if and only if they have the same \textit{cycle type}. For example, in $S_5$, we have that $(1 \ 2)(3 \ 4 \ 5)$ and $(1 \ 5)(2 \ 3 \ 4)$ are conjugate, since both have one cycle of length 3 and one cycle of length 2. Ordering the cycle lengths in descending order, one sees that the conjugacy classes are also labelled by the partitions of size $n$. We remark that it is non-trivial that $\chi_\lambda$  takes values in $\mathbb{Z}$, but since it is well-known that this is the case, we omit further details. For a more extensive survey of the representation theory of the symmetric group, we refer to~\cite[I.7]{MacDonald}.

For Theorem~\ref{thm:WHcoeff} we only need the conjugacy classes of cycle type $(2^j, 1^{n-2j})$. This is due to the specific properties of Hermite polynomials. In Section~\ref{sec:generalization}, we show how to generalize the results to obtain the character values for some other cycle types. 

\begin{proof}[Proof of Theorem~\ref{thm:WHcoeff}]
If one applies~\eqref{eq:recurrencecoefficients} recursively, then using $r_{\lambda,0}=1$ one obtains after $j$ iterations that
\begin{equation*}
	F_{\lambda} r_{\lambda,j}
		= (-1)^{j}\frac{|\lambda|!}{(|\lambda|-2j)! j! 2^j} \sum_{(\tilde{\mu},\tilde{\nu})<_{j}(\mu,\nu)} (-1)^{\htt_2(\lambda/\tilde{\lambda})} F_{\tilde{\lambda}}.
\end{equation*}
The last sum is actually equal to the character value $a(\lambda,j)=\chi_\lambda(2^j,1^{\lvert \lambda \rvert -2j})$; see~\cite[I.7,~Ex.~5]{MacDonald} or~\cite[Eq.~(7.75)]{Stanley_EC2}. This immediately concludes the proof.
\end{proof}

\begin{remark}\label{rem:character}
In~\cite{Bonneux_Stevens}, it was shown that the set of polynomials $\He_{\lambda}$ of a fixed degree $n$ satisfy the weighted average property
\begin{equation}\label{eq:average}
	\sum_{\lambda\vdash n} \frac{F_{\lambda}^2}{n!} \He_{\lambda}(x)
		=x^n
\end{equation}
where the sum runs over all partitions of size $n$ and the Plancherel weight is used. Since each polynomial is monic, it follows that the leading term of the average polynomial should be $x^n$. The fact that all other terms vanish can now be interpreted using Theorem~\ref{thm:WHcoeff} 
in terms of the orthogonality of characters. Namely, if we fix $n\geq0$ and invoke~\eqref{eq:WHcoeff} in~\eqref{eq:average}, we obtain
\begin{equation*}
	x^n 
		= \sum_{\lambda \vdash n} \frac{F_\lambda}{n!} \sum_{j=0}^{\lfloor n/2\rfloor} (-1)^j \frac{n!}{j! (n -2j)! 2^j} a(\lambda,j) x^{n -2j}.
\end{equation*}
Matching coefficients and realizing that $F_\lambda = a(\lambda,0)$ is the character value of the identity, for each $0\leq j \leq \lfloor n /2\rfloor$ we have that
\begin{equation*}
	\delta_{j,0} 
		= \frac{1}{n!}\sum_{\lambda \vdash n} F_\lambda a(\lambda,j)
		= \frac{1}{n!} \sum_{\lambda \vdash n} a(\lambda,0) a(\lambda,j)
\end{equation*}
where $\delta_{j,0}=1$ if $j=0$ and 0 otherwise. So the average property \eqref{eq:average} is equivalent to the well-known orthogonality relation of the given characters. 
\end{remark}

\subsection{Coefficients in terms of hook lengths}\label{sec:coeffhooklengths}
We now present explicit formulae for all coefficients $r_{\lambda,j}$ using  the number of directed paths~$F^{(2)}_{(\mu,\nu)}$ in the lattice $\mathbb{Y}\times\mathbb{Y}$ as defined in~\eqref{eq:F2withF1}.

\begin{theorem}\label{thm:coefficients}
Let $\lambda$ be a partition with quotient $(\mu,\nu)$. Then the coefficients of the remainder polynomial~$R_\lambda$, defined in~\eqref{eq:WHdecomposition} and~\eqref{eq:Rcoeff}, are given by
\begin{equation}\label{eq:coefficients}
	r_{\lambda,j}
	= (-1)^j \binom{|\mu|+|\nu|}{j} 
	\sum_{(\tilde{\mu},\tilde{\nu})<_j(\mu,\nu)} (-1)^{\htt_2(\lambda/\tilde{\lambda})}
	\frac{F^{(2)}_{\tilde{\mu},\tilde{\nu}} \,F^{(2)}_{(\mu,\nu)/(\tilde{\mu},\tilde{\nu})}}{F^{(2)}_{\mu,\nu}} 
	\,
	\frac{H_{\odd}(\lambda)}{H_{\odd}(\tilde{\lambda})}
\end{equation}
for $j=0,1,\dots,|\mu|+|\nu|$, where the partition $\tilde{\lambda}$ has
 quotient $(\tilde{\mu},\tilde{\nu})$ and  the same core as $\lambda$.
\end{theorem}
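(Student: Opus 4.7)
My plan is to iterate the recurrence \eqref{eq:recurrencecoefficients} exactly $j$ times, unfolding $r_{\lambda,j}$ all the way down to the base values $r_{\lambda^{(j)},0}=1$ from Theorem~\ref{thm:WHdecomposition}. Each application at step $k$ replaces $r_{\lambda^{(k)},j-k}$ by a sum over $(\mu^{(k+1)},\nu^{(k+1)})\lessdot(\mu^{(k)},\nu^{(k)})$ and contributes the scalar factor $-|\lambda^{(k)}|(|\lambda^{(k)}|-1)/(2(j-k))$. Since $|\lambda^{(k)}|=|\lambda|-2k$, the product of these $j$ scalar factors telescopes to
\begin{equation*}
\prod_{k=0}^{j-1} \left(-\frac{(|\lambda|-2k)(|\lambda|-2k-1)}{2(j-k)}\right) \;=\; \frac{(-1)^j}{2^j\,j!}\cdot\frac{|\lambda|!}{(|\lambda|-2j)!},
\end{equation*}
while the combinatorial residue is a signed sum indexed by saturated chains of length~$j$ in $\mathbb{Y}\times\mathbb{Y}$ starting at $(\mu,\nu)$, each weighted by the factor $F_{\tilde{\lambda}}$ produced when the chain hits the base case.

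Two observations then collapse this chain-sum to a sum over endpoints. First, the total sign $(-1)^{\sum_k \htt_2(\lambda^{(k)}/\lambda^{(k+1)})}$ equals $(-1)^{\htt_2(\lambda/\tilde{\lambda})}$ by the parity invariance of the number of vertical tiles in a domino tiling of $\lambda/\tilde{\lambda}$, and is therefore independent of the intermediate steps. Second, for any fixed endpoint $(\tilde{\mu},\tilde{\nu})<_j(\mu,\nu)$, the number of such chains equals $F^{(2)}_{(\mu,\nu)/(\tilde{\mu},\tilde{\nu})}$ by the very definition of this quantity. Combining these yields the intermediate formula
\begin{equation*}
F_\lambda\, r_{\lambda,j} \;=\; \frac{(-1)^j}{2^j\,j!}\cdot\frac{|\lambda|!}{(|\lambda|-2j)!} \sum_{(\tilde{\mu},\tilde{\nu})<_j(\mu,\nu)} (-1)^{\htt_2(\lambda/\tilde{\lambda})}\, F_{\tilde{\lambda}}\, F^{(2)}_{(\mu,\nu)/(\tilde{\mu},\tilde{\nu})}.
\end{equation*}

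What remains is purely algebraic: rewriting $F_{\tilde{\lambda}}/F_{\lambda}$ in the form demanded by \eqref{eq:coefficients}. Applying the hook-length expression \eqref{eq:Flambdaquotient} to both $F_{\lambda}$ and $F_{\tilde{\lambda}}$ and using $|\tilde{\mu}|+|\tilde{\nu}|=|\mu|+|\nu|-j$ and $|\tilde{\lambda}|=|\lambda|-2j$ gives
\begin{equation*}
\frac{F_{\tilde{\lambda}}}{F_{\lambda}} \;=\; \frac{2^j\,(|\lambda|-2j)!}{|\lambda|!}\cdot \frac{H_{\odd}(\lambda)}{H_{\odd}(\tilde{\lambda})}\cdot \frac{H(\mu)H(\nu)}{H(\tilde{\mu})H(\tilde{\nu})}.
\end{equation*}
The identity $F^{(2)}_{(\mu,\nu)}=(|\mu|+|\nu|)!/(H(\mu)H(\nu))$ from \eqref{eq:F2withF1} then converts the remaining hook-length ratio into $\frac{(|\mu|+|\nu|)!}{(|\mu|+|\nu|-j)!}\cdot F^{(2)}_{(\tilde{\mu},\tilde{\nu})}/F^{(2)}_{(\mu,\nu)}$, and collecting the factorials with the leftover $1/(2^j j!)$ produces precisely the binomial coefficient $\binom{|\mu|+|\nu|}{j}$ in \eqref{eq:coefficients}. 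The only genuinely non-routine ingredients are the parity invariance of $\htt_2$ and the path-counting interpretation of $F^{(2)}_{(\mu,\nu)/(\tilde{\mu},\tilde{\nu})}$; the rest is careful bookkeeping of factorial and hook-length factors, which is the one place to stay alert.
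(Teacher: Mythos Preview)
Your proof is correct. The paper takes a slightly different organizational route: it argues by induction on~$j$, applying the recurrence~\eqref{eq:recurrencecoefficients} once and invoking the induction hypothesis for $r_{\tilde{\lambda},j-1}$, then doing the hook-length algebra and closing the induction via the one-step identity $\sum_{(\tilde{\mu},\tilde{\nu})\lessdot(\mu,\nu)} F^{(2)}_{(\tilde{\mu},\tilde{\nu})/(\hat{\mu},\hat{\nu})} = F^{(2)}_{(\mu,\nu)/(\hat{\mu},\hat{\nu})}$. You instead unfold the recurrence all $j$ steps at once and collapse the resulting chain-sum to endpoints in a single stroke, using parity invariance of $\htt_2$ for the sign and the path-count interpretation of $F^{(2)}_{(\mu,\nu)/(\tilde{\mu},\tilde{\nu})}$ for the multiplicity. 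The two arguments rest on exactly the same ingredients and the same hook-length identities; your presentation has the advantage of making explicit from the start why the skew path-count $F^{(2)}_{(\mu,\nu)/(\tilde{\mu},\tilde{\nu})}$ appears in the answer, while the paper's inductive version hides this behind the step-by-step recursion. Your intermediate formula is essentially the same expression the paper reaches (more tersely) in its proof of Theorem~\ref{thm:WHcoeff}, so one could also view your argument as factoring through that result.
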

\begin{proof}
We prove this result by induction on $j$. When $j=0$, both sides of~\eqref{eq:coefficients} are trivially one for all partitions $\lambda$. Therefore, we take $j>0$ and consider~\eqref{eq:recurrencecoefficients}. We 
apply the induction hypothesis to the right-hand side of~\eqref{eq:recurrencecoefficients} to obtain
\begin{multline}\label{eq:proofcoefficients0}
	r_{\lambda,j}
	= (-1)^j \frac{|\lambda|(|\lambda|-1)}{2j}  \binom{|\mu|+|\nu|-1}{j-1} 
	\\
	\times
	\sum_{(\tilde{\mu},\tilde{\nu})\lessdot(\mu,\nu)} (-1)^{\htt_2(\lambda/\tilde{\lambda})} \frac{F_{\tilde{\lambda}}}{F_{\lambda}}	
	\sum_{(\hat{\mu},\hat{\nu})<_{j-1}(\tilde{\mu},\tilde{\nu})} (-1)^{\htt_2(\tilde{\lambda}/\hat{\lambda})}
	\frac{F^{(2)}_{\hat{\mu},\hat{\nu}} F^{(2)}_{\tilde{\mu}/\hat{\mu},\tilde{\nu}/\hat{\nu}}}{F^{(2)}_{\tilde{\mu},\tilde{\nu}}} 
	\,
	\frac{H_{\odd}(\tilde{\lambda})}{H_{\odd}(\hat{\lambda})}
\end{multline}
where $\tilde{\lambda}$ has quotient $(\tilde{\mu},\tilde{\nu})$, $\hat{\lambda}$ has quotient $(\hat{\mu},\hat{\nu})$ and the core of both $\tilde{\lambda}$ and $\hat{\lambda}$ are trivially $\bar{\lambda}$. Next, we expand $F_{\lambda}$, $F_{\bar \lambda}$ and $F^{(2)}_{\tilde{\mu},\tilde{\nu}}$, $F^{(2)}_{\mu,\nu}$ using~\eqref{eq:Flambdahooklengths},~\eqref{eq:F2withF1} and~\eqref{eq:Flambdaquotient} to find
\begin{align*}
	&F_{\lambda}
		= \frac{|\lambda|!}{2^{|\mu|+|\nu|}H(\mu)H(\nu) H_{\odd}(\lambda)},
	&&F_{\tilde{\lambda}}
	= \frac{(|\lambda|-2)!}{2^{|\mu|+|\nu|-1}H(\tilde{\mu})H(\tilde{\nu}) H_{\odd}(\tilde{\lambda})},
	\\
	&F^{(2)}_{\tilde{\mu},\tilde{\nu}}
	= \binom{|\tilde{\mu}|+|\tilde{\nu}|}{|\tilde{\mu}|} \frac{|\tilde{\mu}|!}{H(\tilde{\mu})} \frac{|\tilde{\nu}|!}{H(\tilde{\nu})},
	&&F^{(2)}_{\mu,\nu}
	=\binom{|\mu|+|\nu|}{|\mu|}\frac{|\mu|!}{H(\mu)} \frac{|\nu|!}{H(\nu)}.
\end{align*}
Using these expressions and interchanging the sums in~\eqref{eq:proofcoefficients0}, we find the expression for $r_{\lambda,j}$ simplifies to
\begin{equation}\label{eq:proofcoefficients}
	r_{\lambda,j}
		= (-1)^j \binom{|\mu|+|\nu|}{j} 
		\sum_{(\hat{\mu},\hat{\nu})<_{j-1}(\tilde{\mu},\tilde{\nu})}
		(-1)^{\htt_2(\lambda/\hat{\lambda})}
		\frac{F^{(2)}_{\hat{\mu},\hat{\nu}}}{F^{(2)}_{\mu,\nu}}
		\frac{H_{\odd}(\lambda)}{H_{\odd}(\hat{\lambda})}	
		\sum_{(\tilde{\mu},\tilde{\nu})\lessdot(\mu,\nu)}	
		F^{(2)}_{\tilde{\mu}/\hat{\mu},\tilde{\nu}/\hat{\nu}}.
\end{equation}
Finally, we have 
\begin{equation*}
	\sum_{(\tilde{\mu},\tilde{\nu})\lessdot(\mu,\nu)}	
	F^{(2)}_{\tilde{\mu}/\hat{\mu},\tilde{\nu}/\hat{\nu}}
		= 	F^{(2)}_{\mu/\hat{\mu},\nu/\hat{\nu}}
\end{equation*}
and so we conclude that~\eqref{eq:proofcoefficients} leads to~\eqref{eq:coefficients}.
\end{proof}

\begin{remark}\label{rem:weight}
The first fraction in the sum of~\eqref{eq:coefficients} can be seen as a weight:  the denominator is the number of paths from $(\emptyset,\emptyset)$ to $(\mu,\nu)$ in the lattice $\mathbb{Y}\times\mathbb{Y}$, whereas the numerator is the number of such paths 
that pass through $(\tilde{\mu},\tilde{\nu})$. So the sum of all weights for a fixed $j$ is 1.
\end{remark}

\begin{example}\label{ex:coefficientsremainderpolynomial}
In Figure~\ref{fig:dominoprocess} we considered the domino process for the partition $\lambda=(4,2^2,1)$ with core $(2,1)$ and quotient $((2),(1))$. 
Counting the paths shown in Figure~\ref{fig:dominoprocess} from $(4,2^2,1)$ to $(2,1)$, we see that $F_{\mu,\nu}^{(2)}=3$. In this process there are six partitions, whose relevant data is given in Table \ref{tab:coefficientsremainderpolynomial}. Using~\eqref{eq:coefficients} for the coefficients $r_{\lambda,j}$ yields
\begin{align*}
	r_{\lambda,0} 
		&= \binom{3}{0} \frac{3\cdot 1}{3} \frac{105}{105} = 1 \\
	r_{\lambda,1}
		&= -\binom{3}{1} \left(\frac{1\cdot 1}{3} \frac{105}{45} - \frac{2\cdot 1}{3} \frac{105}{63}\right) = -3 \left(\frac{7}{9} -\frac{10}{9}\right)=1 \\
	r_{\lambda,2} 
		&= \binom{3}{2} \left(-\frac{1\cdot 2}{3} \frac{105}{15} +\frac{1\cdot 1}{3}\frac{105}{15}\right) = 3 \left(-\frac{14}{3}+\frac{7}{3}\right) = -7 \\
	r_{\lambda,3} 
		&= -\binom{3}{3} \frac{1 \cdot 3}{3} \frac{105}{3} = -35
\end{align*}
and consequently $\He_{(4,2^2,1)}(x)=x^3(x^6+x^4-7x^2-35)$, which was already stated in Figure~\ref{fig:dominoprocess}.
\end{example}

\begin{table}[t]
	\centering
	\begin{tabular}{| c | c | c | c | c | c | c |}
		\hline
		& & & & & &\\[-1em]
		j & $\tilde{\lambda}$ & $(\tilde{\mu},\tilde{\nu})$ & $F_{\tilde{\mu},\tilde{\nu}}^{(2)}$ & $F^{(2)}_{(\mu,\nu)/(\tilde{\mu},\tilde{\nu})}$ & $H_{\odd}(\tilde{\lambda})$ & $(-1)^{\htt_2(\lambda/\tilde{\lambda})}$ \\
		& & & & & &\\[-1em] 
		\hline
		& & & & & &\\[-1em]
		0 & $(4,2^2,1)$ & ((2),(1)) & 3 & 1 & 105 & $+1$\\
		& & & & & &\\[-1em]
		\hline
		& & & & & &\\[-1em]
		1 & $(2^3,1)$ & $((2),\emptyset)$ & 1 & 1 & 45 & $+1$ \\
		1 & $(4,1^3)$ & ((1),(1)) & 2 & 1 & 63 & $-1$ \\
		& & & & & &\\[-1em]
		\hline
		& & & & & &\\[-1em]
		2 & $(2,1^3)$ & $((1),\emptyset)$ & 1 & 2 & 15 & $-1$ \\
		2 & (4,1) & $(\emptyset,(1))$ & 1 & 1 & 15 & $+1$\\
		& & & & & &\\[-1em]
		\hline
		& & & & & &\\[-1em]
		3 & (2,1) & $(\emptyset,\emptyset)$ & 1 & 3 & 3 & $+1$ \\
		\hline 
	\end{tabular}
	\caption{Partition data related to Example \ref{ex:coefficientsremainderpolynomial}.}
	\label{tab:coefficientsremainderpolynomial}
\end{table}

For the results in Section~\ref{subsec:coeffpoly}, it is convenient to rewrite~\eqref{eq:coefficients} using~\eqref{eq:R0}. One immediately obtains that the constant terms of the polynomials $(R_\lambda)_{\lambda\in\mathbb{Y}}$ carry all information about all other terms in the polynomials.

\begin{corollary}\label{cor:coefficientsR0}
Let $\lambda$ be a partition with quotient $(\mu,\nu)$. Then the coefficients of the polynomial~$R_\lambda$, defined in~\eqref{eq:WHdecomposition} and~\eqref{eq:Rcoeff}, are given by
\begin{equation}\label{eq:coefficientsR0}
	r_{\lambda,j} 
		= \binom{\lvert \mu \rvert + \lvert \nu \rvert}{j} \sum_{(\tilde{\mu},\tilde{\nu})<_j(\mu,\nu)} \frac{F^{(2)}_{\tilde{\mu},\tilde{\nu}} \,F^{(2)}_{(\mu,\nu)/(\tilde{\mu},\tilde{\nu})}}{F^{(2)}_{\mu,\nu}} \frac{R_\lambda(0)}{R_{\tilde{\lambda}}(0)}
\end{equation}
for $j=0,1,\dots,|\mu|+|\nu|$  where $\tilde{\lambda}$ has quotient $(\tilde{\mu},\tilde{\nu})$ and the same core as $\lambda$.
\end{corollary}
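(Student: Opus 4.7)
The plan is to derive \eqref{eq:coefficientsR0} directly from \eqref{eq:coefficients} by replacing the hook-length ratio $H_{\odd}(\lambda)/H_{\odd}(\tilde{\lambda})$ with the ratio $R_{\lambda}(0)/R_{\tilde{\lambda}}(0)$, using the explicit formula \eqref{eq:R0}. The key is that $\lambda$ and $\tilde{\lambda}$ share the same core $\bar{\lambda}$, so the factor $H(\bar{\lambda})$ appearing in $R_{\lambda}(0)$ and $R_{\tilde{\lambda}}(0)$ cancels out in the ratio, leaving exactly $(-1)^{h_{\lambda}-h_{\tilde{\lambda}}}\,H_{\odd}(\lambda)/H_{\odd}(\tilde{\lambda})$.

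First, I would verify the sign identity
\[
h_{\lambda}-h_{\tilde{\lambda}}\equiv j+\htt_2(\lambda/\tilde{\lambda}) \pmod{2}.
\]
To see this, recall that $h_{\lambda}=\htt_2(\lambda/\bar{\lambda})+(|\lambda|-|\bar{\lambda}|)/2$. Since $\tilde{\lambda}<_j(\mu,\nu)$ in the quotient lattice corresponds to removing $j$ dominoes, we have $|\lambda|-|\tilde{\lambda}|=2j$, which contributes the term $j$. For the height contribution, the parity of $\htt_2$ is additive under composition of domino sequences (because it is invariant under choice of tiling, see the discussion preceding Theorem~\ref{thm:WHdecomposition}), so $\htt_2(\lambda/\bar{\lambda})\equiv \htt_2(\lambda/\tilde{\lambda})+\htt_2(\tilde{\lambda}/\bar{\lambda})\pmod{2}$, giving the $\htt_2(\lambda/\tilde{\lambda})$ contribution.

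Next, I would combine this with \eqref{eq:R0} to obtain
\[
\frac{H_{\odd}(\lambda)}{H_{\odd}(\tilde{\lambda})}=(-1)^{j+\htt_2(\lambda/\tilde{\lambda})}\,\frac{R_{\lambda}(0)}{R_{\tilde{\lambda}}(0)}.
\]
Substituting this into \eqref{eq:coefficients}, the factors $(-1)^j$ and $(-1)^{\htt_2(\lambda/\tilde{\lambda})}$ each appear twice and therefore disappear, producing \eqref{eq:coefficientsR0} term-by-term.

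There is essentially no real obstacle here; the only subtle point is the additivity of $\htt_2$ modulo~$2$ under concatenation of domino-removal sequences, and this is already known (and invoked in the proof of Theorem~\ref{thm:WHdecomposition}). Everything else is a mechanical substitution with careful bookkeeping of signs.
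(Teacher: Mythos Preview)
Your argument is correct and is exactly the approach the paper takes: the paper simply says to rewrite \eqref{eq:coefficients} using \eqref{eq:R0}, and you have spelled out the sign bookkeeping that makes this immediate. In fact your write-up is more detailed than the paper's one-line justification.
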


\begin{remark}
It is also possible to rewrite~\eqref{eq:coefficients} using~\eqref{eq:F2withF1} and~\eqref{eq:Flambdaquotient}. This yields
\begin{equation*}
	r_{\lambda,j}
		= (-2)^{-j}
		\sum_{(\tilde{\mu},\tilde{\nu})<_{j}(\mu,\nu)}
		(-1)^{\htt_2(\lambda/\tilde{\lambda})}
		\frac{F_{\mu/\tilde{\mu}} \, F_{\nu/\tilde{\nu}}}{|\mu/\tilde{\mu}|! \, |\nu/\tilde{\nu}|!}
		\frac{H(\lambda)}{H(\tilde{\lambda})}
\end{equation*}
for each $j$. Though this formula is less susceptible to interpretation because the first fraction in the sum cannot be seen as a weight (cf. Remark~\ref{rem:weight}), it uses information about all the hook lengths and not just the odd ones.
\end{remark}

\subsection{Coefficients as polynomials in the length of the core}\label{subsec:coeffpoly}
For a partition $\lambda$ with quotient $(\mu,\nu)$ we have that the polynomial $R_\lambda$ has degree~${\lvert\mu\rvert+\lvert\nu\rvert}$. Therefore, one  naturally asks how the polynomial $R_\lambda$ changes if one fixes the quotient $(\mu,\nu)$ and varies the size $k$ of the core. 

\begin{theorem}\label{thm:coeffpoly}
Fix a pair of partitions $(\mu,\nu)$ and for all $k\geq 0$ let $\Phi(\mu,\nu,k)$ denote the partition with quotient $(\mu,\nu)$ and core $(k,k-1,\dots,2,1)$ as described in Section~\ref{sec:coresandquotients}. Then we have that for all $0\leq j \leq \lvert \mu \rvert + \lvert \nu \rvert$, the coefficient $r_{\Phi(\mu,\nu,k),j}$ of $R_{\Phi(\mu,\nu,k)}$, defined in~\eqref{eq:WHdecomposition} and~\eqref{eq:Rcoeff}, is a polynomial in $k$ of at most degree $j$.
\end{theorem}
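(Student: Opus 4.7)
The plan is to prove Theorem~\ref{thm:coeffpoly} by induction on $j$. The base case $j=0$ is immediate, since $r_{\lambda,0}=1$ is a constant polynomial in~$k$ for every $\lambda$.

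For the inductive step, I invoke the recurrence~\eqref{eq:recurrencecoefficients} for $r_{\lambda,j}$. Using~\eqref{eq:Flambdaquotient} to expand $F_{\lambda}$ and $F_{\tilde\lambda}$ and cancelling the factor $|\lambda|(|\lambda|-1)$ that appears in both the prefactor and in $F_{\tilde\lambda}/F_{\lambda}$, the recurrence becomes
\[
r_{\Phi(\mu,\nu,k),j}
=-\frac{1}{j}\sum_{(\tilde\mu,\tilde\nu)\lessdot(\mu,\nu)}(-1)^{\htt_2(\lambda/\tilde\lambda)}\,\frac{H(\mu)H(\nu)}{H(\tilde\mu)H(\tilde\nu)}\,\frac{H_{\odd}(\lambda)}{H_{\odd}(\tilde\lambda)}\,r_{\Phi(\tilde\mu,\tilde\nu,k),j-1},
\]
with $\lambda=\Phi(\mu,\nu,k)$ and $\tilde\lambda=\Phi(\tilde\mu,\tilde\nu,k)$. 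The prefactors $H(\mu)H(\nu)/(H(\tilde\mu)H(\tilde\nu))$ are $k$-independent, and the signs $(-1)^{\htt_2(\lambda/\tilde\lambda)}$ stabilise for $k$ large enough that the $2$-modular decomposition of the Maya diagram $M_{\lambda}$ has reached its asymptotic form. By the inductive hypothesis each $r_{\Phi(\tilde\mu,\tilde\nu,k),j-1}$ is a polynomial in $k$ of degree at most $j-1$.

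The key remaining step is to show that the sign-weighted sum
$\sum(-1)^{\htt_2(\lambda/\tilde\lambda)}\tfrac{H_{\odd}(\lambda)}{H_{\odd}(\tilde\lambda)}r_{\tilde\lambda,j-1}$
produces a polynomial in $k$ of degree at most $j$. This is subtle because the individual ratios $H_{\odd}(\lambda)/H_{\odd}(\tilde\lambda)$ are generally rational, not polynomial, in $k$: already for $(\mu,\nu)=((1),(1))$ and $(\tilde\mu,\tilde\nu)=((1),\emptyset)$ one finds $R_\lambda(0)/R_{\tilde\lambda}(0)=-(2k-1)(2k+3)/(2k+1)$, whose denominator is killed only by the matching contribution from $(\tilde\mu,\tilde\nu)=(\emptyset,(1))$. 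To make these cancellations explicit, I would parametrise the Maya diagram of $\Phi(\mu,\nu,k)$ uniformly in $k$ by taking the shifts $s=0$ and $s'=k+\ell(\mu)-\ell(\nu)$ for $k$ sufficiently large, so that the degree vector of $\lambda$ becomes $\{2m_i^\mu\}\cup\{2m_j^\nu+2s'+1\}\cup\{1,3,\dots,2s'-1\}$; the analogous description applies to every $\tilde\lambda$. Combined with $H_{\odd}(\lambda)=H(\lambda)/(2^{|\mu|+|\nu|}H(\mu)H(\nu))$ and $H(\lambda)=\prod_i n_i!/|\Delta(n_\lambda)|$, this expresses each ratio as an explicit rational function of~$k$.

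The main obstacle is then the combinatorial cancellation: showing that summing these explicit rational functions against the signs $(-1)^{\htt_2(\lambda/\tilde\lambda)}$ and the polynomial $r_{\tilde\lambda,j-1}$ clears all denominators and raises the polynomial degree by at most one. Once this is established for $k$ in the stable range, it follows for every $k\geq 0$: the recurrence gives $r_{\Phi(\mu,\nu,k),j}$ as the same rational expression in $k$ at every integer $k\geq 0$, and a rational function that agrees with a polynomial of degree at most $j$ on an infinite set is that polynomial everywhere.
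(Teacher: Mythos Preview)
Your approach has a genuine gap. You correctly identify that the recurrence reduces the problem to controlling the sign-weighted sum of ratios $H_{\odd}(\lambda)/H_{\odd}(\tilde\lambda)$ against the $r_{\tilde\lambda,j-1}$, and you correctly observe that the individual ratios are only rational in~$k$. But then you state the ``main obstacle''---proving that the summation clears all denominators---without supplying any argument. That cancellation is the entire content of the theorem; everything else in your write-up is bookkeeping. Your own example with $(\mu,\nu)=((1),(1))$ already shows the denominators are genuinely present term by term, so this is not a step one can leave as an exercise.

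The paper's proof sidesteps the cancellation by a different mechanism. First it shows (Proposition~\ref{prop:psi}) that the constant term $R_{\Phi(\mu,\nu,k)}(0)$ is an explicit polynomial $\Psi_{\mu,\nu}(k)$ of degree $|\mu|+|\nu|$, built directly from the Maya-diagram description of the odd hooks (Lemma~\ref{lem:formulahooklengths0}). Combined with Corollary~\ref{cor:coefficientsR0}, this writes $r_{\Phi(\mu,\nu,k),j}$ as a $k$-independent linear combination of ratios $\Psi_{\mu,\nu}(k)/\Psi_{\tilde\mu,\tilde\nu}(k)$---manifestly a rational function of $k$ of degree at most~$j$. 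The punchline is then purely arithmetic: since the Wronskian Hermite coefficients are known to be integers for every $k$ (from~\cite{Bonneux_Hamaker_Stembridge_Stevens}), Lemma~\ref{lem:rationalfunctionintegers} forces this rational function to be a polynomial. No cancellation is ever computed; integrality does all the work. Your Maya-diagram parametrisation of the degree vector is essentially what underlies $\Psi_{\mu,\nu}$, so you were close to the right object, but without either a direct cancellation argument or the integrality-plus-Lemma~\ref{lem:rationalfunctionintegers} trick, the proof does not close.
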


\begin{example}
Let $(\mu,\nu)=((4,1),(3))$. Then
\begin{align*}
	R_{\Phi(\mu,\nu,k)}(x)
	=&\,x^8
	+2(2k-15) \, x^7
	-2(2k-5)(2k+33) \, x^6
	+(-48k^3+24k^2+1404k-1230) \, x^5
	\\
	&+120k(2k-7)(2k+9) \, x^4
	+6(2k-7)(2k-5)(2k+1)(2k+7)(2k+9) \, x^3
	\\
	&+2(2k-7)(2k-5)(2k-3)(2k+1)(2k+5)(2k+9) \, x^2
	\\
	&-2(2k-7)(2k-5)(2k-3)(2k+1)(2k+3)(2k+5)(2k+9) \, x
	\\
	&-(2k-7)(2k-5)(2k-3)(2k+1)^2 (2k+3)(2k+5)(2k+9).
\end{align*}
See Table \ref{tab:exampleWH} for the corresponding Wronskian Hermite polynomials for $k=0,1,2,3,4$. 
\end{example}

\begin{table}[t]
	\centering
	\begin{scriptsize}
	\begin{tabular}{| c | c | l |}
		\hline
		& & \\[-0.2cm]
		$k$ & $\Phi((4,1),(3),k) $ & $\He_{\Phi((4,1),(3),k)}(x) $ \\
		\hline
		& & \\[-0.2cm]
		0 & $(7^2,1^2)$ & $x^0(x^{16}-30x^{14}+330x^{12}-1230x^{10}+13230x^6-9450x^4+28350x^2+14175)$\\[0.1cm]
		1 & $(7^2,1^3)$ & $x^1(x^{16}-26x^{14}+210x^{12}+150x^{10}-6600x^8+26730x^6-6930x^4+34650x^2+51975)$\\[0.1cm]
		2 & $(8,6,2,1^3)$ & $x^3(x^{16}-22x^{14}+74x^{12}+1290x^{10}-9360x^{8}+12870x^6+3510x^4-24570x^2-61425)$\\[0.1cm]
		3 & $(9,5,3,2,1^3)$ & $x^6(x^{16}-18x^{14}-78x^{12}+1902x^{10}-5400x^{8}-8190x^6-6930x^4+62370x^2+218295)$\\[0.1cm]
		4 & $(10,4^2,3,2,1^3)$ & $x^{10}(x^{16}-14x^{14}-246x^{12}+1698x^{10}+8160x^{8}+41310x^{6}+59670x^{4}-656370x^{2}-2953665)$\\[0.1cm]
		\hline
	\end{tabular}
	\end{scriptsize}
	\caption{The polynomials $\He_{\Phi((4,1),(3),k)}(x)$ for $k=0,1,2,3,4$. The coefficients are polynomial in~$k$, see Theorem~\ref{thm:coeffpoly}.}
	\label{tab:exampleWH}
\end{table}

\begin{remark}
We do not offer explicit expressions for all the coefficients in terms of $k$; the proof of Theorem~\ref{thm:coeffpoly} is existential instead of constructive. However, we can deduce enough properties of the polynomials to obtain the asymptotic result presented in Section~\ref{sec:asymptotics}.
\end{remark}

The rest of this section is dedicated to the rather technical proof of Theorem~\ref{thm:coeffpoly}. It relies on Corollary~\ref{cor:coefficientsR0}, which expresses the coefficients of the remainder polynomials in terms of their constants. In Proposition~\ref{prop:psi} we give explicit expressions for these constants as polynomials in $k$. First, we give  a useful way of describing the odd hooks of a partition in terms of the two Maya diagrams that represent the quotient. 

\begin{lemma}\label{lem:formulahooklengths0}
Take a pair of partitions $(\mu,\nu)$ and denote their Maya diagrams by $M_{\mu}$ and $M_{\nu}$. Let $s,s'\geq 0$ be integers such that $\ell(\mu)+s=\ell(\nu)+s'$. Then we have
\begin{multline}\label{eq:formulahooklengths0}
H_{\odd}(\Phi(\mu,\nu,0))=\left[\prod_{m\in M_\mu +s} \prod_{\substack{n<m \\ n \not\in M_\nu+s'}} \big( 2(m-n) -1 \big) \right] \\ \times \left[\prod_{n\in M_\nu +s'} \prod_{\substack{m\leq n \\ m \not\in M_\mu+s}} \big( 2(n-m) +1 \big) \right].
\end{multline}
\end{lemma}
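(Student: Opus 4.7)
The approach is to read the hook lengths of $\Phi(\mu,\nu,0)$ directly off a Maya diagram built from those of $\mu$ and $\nu$, and to split the contributions to $H_{\odd}$ by the parity of the two indices involved. I would first record the standard fact that for any Maya diagram $M$ equivalent to $M_\lambda$, the multiset of hook lengths of $\lambda$ coincides with $\{a-b : a\in M,\ b\notin M,\ a>b\}$. This is verified directly in the canonical case (where $a$ ranges over the degree vector $\{n_i\}$ and $b$ over the non-negative integers not in the degree vector) and is manifestly invariant under a uniform shift of origin, so it applies to every equivalent diagram.

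Since $\ell(\mu)+s=\ell(\nu)+s'$, the 2-modular decomposition~\eqref{eq:twomodulardecomposition} with $M^{(0)}=M_\mu+s$ and $M^{(1)}=M_\nu+s'$ produces a Maya diagram $M$ equivalent to $M_{\Phi(\mu,\nu,0)}$ by the very definition of $\Phi$ with $k=0$. A difference $a-b$ is odd precisely when $a$ and $b$ have opposite parities, so the odd hook lengths split into two disjoint groups. In the first case, $a=2m$ is even and $b=2n+1$ is odd; then $a\in M$ translates to $m\in M^{(0)}=M_\mu+s$, $b\notin M$ to $n\notin M^{(1)}=M_\nu+s'$, and $b<a$ to $n<m$, contributing the factor $2(m-n)-1$. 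In the second case, $a=2m+1$ and $b=2n$; then $m\in M_\nu+s'$, $n\notin M_\mu+s$, and $b<a$ becomes $n\leq m$, contributing $2(m-n)+1$. Taking the product and relabelling indices in the second case yields exactly the right-hand side of~\eqref{eq:formulahooklengths0}.

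The main subtlety is ensuring that the ostensibly infinite products are in fact finite. In the first case, the inner product ranges over $n<m$ with $n\notin M_\nu+s'$; since $M_\nu+s'$ contains every integer strictly less than $s'$, such an $n$ must satisfy $s'\leq n<m$, and in particular the inner product is empty as soon as $m\leq s'$. The outer product then has only finitely many non-trivial factors, because $M_\mu+s$ contains only finitely many integers $\geq s$, namely the shifted degree vector of $\mu$. The second case is symmetric. The most error-prone step is the bookkeeping around the strict versus non-strict inequality in $b<a$, which produces $n<m$ in the first case but $n\leq m$ in the second; beyond that, there is no real obstacle.
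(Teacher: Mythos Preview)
Your proof is correct and follows essentially the same approach as the paper: both interpret hook lengths as distances $a-b$ between filled and empty boxes in the Maya diagram of $\Phi(\mu,\nu,0)$, then use the $2$-modular decomposition to split the odd hook lengths according to the parities of $a$ and $b$. Your write-up is in fact more explicit than the paper's sketch, particularly in handling the strict versus non-strict inequalities and the finiteness of the products.
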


This formula is not explicitly in \cite{MacDonald}, but follows quite directly from the concepts therein. For the readers' convenience, we include the idea of the proof. First, note that the hook lengths of the boxes in a Young diagram can be indicated using the corresponding Maya diagram. Namely, for any fixed filled box in a Maya diagram, the set of distances between that box and all empty boxes to the left indicate the hook lengths of a row in the Young diagram, as indicated for an example in the middle image in Figure~\ref{fig:hooklengthsMayadiagram}. We note that it does not matter if we choose the canonical Maya diagram or an equivalent one, since this choice does not alter the relative distances.

Transferring this  interpretation to the Maya diagrams of the quotient, we see that the hook lengths naturally split into two classes. The even hook lengths are given by  twice the distances between an empty and a filled box within the same Maya diagram, while the odd hook lengths are in terms of a filled box in one Maya diagram and an empty box in the other one. This is visualized in the right image in Figure~\ref{fig:hooklengthsMayadiagram}. Writing down this interpretation directly yields the formula~\eqref{eq:formulahooklengths0}, which consists of two finite double products.

\begin{figure}[h]
	\centering
	\begin{tikzpicture}[scale=0.4]
	\footnotesize{\foreach \x in {0,2,4,6,8,10,12}
		{
			\draw[fill=black!20!white,draw=none] (-2+\x,1) rectangle (-1+\x,0);
		}
	\draw[very thin,color=gray] (-2.5,1) grid (11.5,0);
	\draw[thick,color=black] (0,1.4) -- (0,-0.4);
	\draw (-3,0.5) node {$\dots$};
	\draw (12,0.5) node {$\dots$};
	\foreach \x in {-2,-1,1,3,4,7,8}
	{
		\draw (\x+0.5,0.5) node {$\bullet$};
	}

	\draw (0.5,1) to[out=90,in=90] node[pos=0.5,above]{8} (8.5,1);
	\draw (2.5,1) to[out=90,in=90] node[pos=0.25, below]{6} (8.5,1);
	\draw (5.5,1) to[out=90,in=90] node[pos=0.07, above]{3} (8.5,1);
	\draw (6.5,1) to[out=90,in=90] node[pos=0.15,xshift=0.4cm]{2} (8.5,1);
	
	\draw[dashed] (0.5,0) to[out=-90,in=-90] node[pos=0.25,below]{7} (7.5,0);
	\draw[dashed] (2.5,0) to[out=-90,in=-90] node[pos=0.4, yshift=0.15cm]{5} (7.5,0);
	\draw[dashed] (5.5,0) to[out=-90,in=-90] node[pos=0.25,left]{2} (7.5,0);
	\draw[dashed] (6.5,0) to[out=-90,in=-90] node[pos=0.5,xshift=0.3cm,yshift=-0.02cm]{1} (7.5,0);
	
	%%%%%%%%%%%%%%%%%%%%%%%%%%%%%%%%%%%%%%%%%%%%%%%%%%%%%%%%%%%%%%%%%%%%%%%%%
	
	\draw[fill=black!20!white,draw=none] (15.5,3) rectangle (24.5,2);
	\draw[very thin,color=gray] (15.5,3) grid (24.5,2);
	\draw[thick,color=black] (18,3.4) -- (18,1.6);
	\draw (15,2.5) node {$\dots$};
	\draw (25,2.5) node {$\dots$};
	\foreach \x in {-2,-1,2,4}
	{
		\draw (\x+18.5,2.5) node {$\bullet$};
	}
	
	\draw[very thin,color=gray] (15.5,-1) grid (24.5,-2);
	\draw[thick,color=black] (18,-0.6) -- (18,-2.4);
	\draw (15,-1.5) node {$\dots$};
	\draw (25,-1.5) node {$\dots$};
	\foreach \x in {-2,-1,0,1,3}
	{
		\draw (\x+18.5,-1.5) node {$\bullet$};
	}

	\draw (18.5,3) to[out=90,in=90] node[pos=0.10, above]{8} (22.5,3);
	\draw (19.5,3) to[out=90,in=90] node[pos=0.30, xshift=0.1cm, yshift=-0.13cm]{6} (22.5,3);
	\draw (21.5,3) to[out=90,in=90] node[pos=0.10, xshift=-0.1cm,yshift=0.1cm]{2} (22.5,3);
	\draw (20.5,-1) to[out=90,in=-90] node[pos=0.80, xshift=0.05cm, yshift=-0.15cm]{3} (22.5,2);
	
	\draw[dashed] (20.5,-2) to[out=-90,in=-90] node[pos=0.50, yshift=-0.15cm]{2} (21.5,-2);
	\draw[dashed] (18.5,2) to[out=-90,in=90] node[pos=0.25, below]{7} (21.5,-1);
	\draw[dashed] (19.5,2) to[out=-90,in=90] node[pos=0.20, right]{5} (21.5,-1);
	\draw[dashed] (21.5,2) to[out=-90,in=90] node[pos=0.70, xshift=0.1cm]{1} (21.5,-1);
	}
	%%%%%%%%%%%%%%%%%%%%%%%%%%%%%%%%%%%%%%%%%%%%%%%%%%%%%%%%%%%%%%%%%%%%%%%%%%%
	
	\draw (-8,0.5) node 
		{
			\begin{ytableau}
				8 & 6 & 3 & 2 \\
				7 & 5 & 2 & 1 \\
				{\color{lightgray} 4} & {\color{lightgray} 2} \\
				{\color{lightgray} 3} & {\color{lightgray} 1} \\
				{\color{lightgray} 1}
			\end{ytableau}
		};
\end{tikzpicture}
\caption{The hook lengths of the first two rows of $(4^2,2^2,1)$ displayed in its Young diagram, in its canonical Maya diagram, and in the pair of Maya diagrams associated to the quotient.}
\label{fig:hooklengthsMayadiagram}
\end{figure}
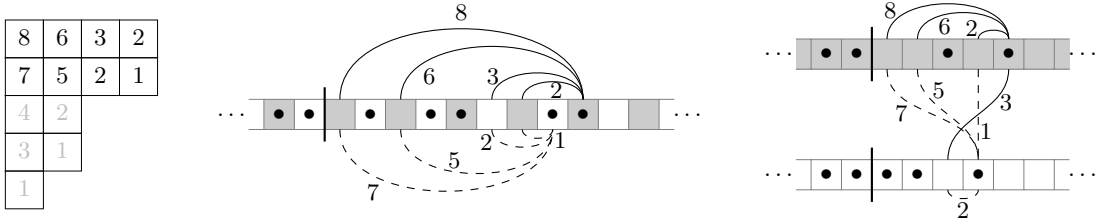 

Motivated by Lemma~\ref{lem:formulahooklengths0} we define the following polynomial.

\begin{definition}
For any pair of partitions $(\mu,\nu)$, with Maya diagrams $M_{\mu}$ and $M_{\nu}$ and integers $s,s'\geq 0$ such that $\ell(\mu)+s=\ell(\nu)+s'$, we set
\begin{multline}\label{eq:defPsi}
\Psi_{\mu,\nu}(z):=(-1)^{h_{\Phi(\mu,\nu,0)}} \left[\prod_{m\in M_\mu +s} \prod_{\substack{n<m \\ n \not\in M_\nu+s'}} \big( 2(m-n) -1-2z \big) \right] \\ \times \left[\prod_{n\in M_\nu +s'} \prod_{\substack{m\leq n \\ m \not\in M_\mu+s}} \big( 2(n-m) +1 +2z \big) \right]
\end{multline}
where $\Phi(\mu,\nu,0)$ represents the partition $\lambda$ with quotient $(\mu,\nu)$ and empty core such that $|\lambda|$ is even, see \eqref{eq:sizelambda}. By definition, we then have ${h_{\Phi(\mu,\nu,0)}=\htt_2(\lambda) +|\lambda|/2}$.
\end{definition}

By Lemma~\ref{lem:formulahooklengths0} and~\eqref{eq:R0} we immediately derive that
\begin{equation}\label{eq:Psi0}
	\Psi_{\mu,\nu}(0)
		= (-1)^{h_{\Phi(\mu,\nu,0)}} H_{\odd}(\Phi(\mu,\nu,0))=R_{\Phi(\mu,\nu,0)}(0).
\end{equation}
Moreover, the main reason for introducing the polynomial $\Psi_{\mu,\nu}(z)$ is that~\eqref{eq:Psi0} can be generalized.

\begin{proposition}\label{prop:psi}
For any pair of partitions $(\mu,\nu)$ and for any $k\geq 0$, we have
\begin{equation}\label{eq:psiR}
\Psi_{\mu,\nu}(k)=(-1)^{h_{\Phi(\mu,\nu,k)}} \frac{H_{\odd}(\Phi(\mu,\nu,k))}{H((k,k-1,\dots,2,1))} = R_{\Phi(\mu,\nu,k)}(0).
\end{equation}
Furthermore, $\Psi_{\mu,\nu}$ is a polynomial in $k$ of degree $\lvert \mu \rvert + \lvert \nu \rvert$, with leading coefficient $(-1)^{|\nu|} \, 2^{|\mu|+|\nu|}$.
\end{proposition}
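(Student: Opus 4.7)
The plan is to establish well-definedness of $\Psi_{\mu,\nu}(z)$, compute its degree and leading coefficient by counting factors in its defining product, and then prove the identity $\Psi_{\mu,\nu}(k)=R_{\Phi(\mu,\nu,k)}(0)$ by extending Lemma~\ref{lem:formulahooklengths0} to arbitrary $k\ge0$ and matching the resulting products term by term. Note that the second equality in~\eqref{eq:psiR} is immediate from~\eqref{eq:R0} in Theorem~\ref{thm:WHdecomposition}, so the real content is the first equality.

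Well-definedness is straightforward: replacing $(s,s')$ by $(s+1,s'+1)$ shifts both Maya diagrams in lockstep, preserving every difference $m-n$ and every set-membership condition inside the two products, so $\Psi_{\mu,\nu}(z)$ depends only on $(\mu,\nu)$. For the degree, each linear factor contributes $-2z$ (first product) or $+2z$ (second product), and the total number of factors equals the number of "cross" filled/empty pairs between $M_\mu+s$ and $M_\nu+s'$. By the pairing argument underlying Lemma~\ref{lem:formulahooklengths0}, this equals the number of odd hook lengths of $\Phi(\mu,\nu,0)$, which is $|\mu|+|\nu|$: the partition $\Phi(\mu,\nu,0)$ has empty core, size $2(|\mu|+|\nu|)$, and precisely $|\mu|+|\nu|$ even hooks (supplied by the quotient). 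So $\deg\Psi_{\mu,\nu}=|\mu|+|\nu|$, and the leading coefficient equals $(-1)^{h_{\Phi(\mu,\nu,0)}}(-2)^{f}\cdot 2^{|\mu|+|\nu|-f}=(-1)^{h_{\Phi(\mu,\nu,0)}+f}\,2^{|\mu|+|\nu|}$, where $f$ is the number of factors in the first product. A parity check in the Maya-diagram model, based on the standard description of $\htt_2$ as an inversion count in the interleaved Maya word, shows that $f+h_{\Phi(\mu,\nu,0)}\equiv|\nu|\pmod{2}$, giving the claimed $(-1)^{|\nu|}\,2^{|\mu|+|\nu|}$.

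For the identity $\Psi_{\mu,\nu}(k)=R_{\Phi(\mu,\nu,k)}(0)$, the key step is to extend Lemma~\ref{lem:formulahooklengths0} to the shifts $s$ and $s'+k$ (so $\ell(\nu)+(s'+k)-\ell(\mu)-s=k$): the same Maya-diagram pairing argument yields
\begin{equation*}
H_{\odd}(\Phi(\mu,\nu,k))=\Biggl[\prod_{m\in M_\mu+s}\prod_{\substack{n<m \\ n\notin M_\nu+s'+k}}(2(m-n)-1)\Biggr]\Biggl[\prod_{n\in M_\nu+s'+k}\prod_{\substack{m\le n \\ m\notin M_\mu+s}}(2(n-m)+1)\Biggr].
\end{equation*}
The substitution $\tilde n=n-k$ converts each factor into the form $2(m-\tilde n)-1-2k$ or $2(\tilde n-m)+1+2k$, matching the factors in $\Psi_{\mu,\nu}(k)$ except on the "boundary" ranges $m-k\le\tilde n<m$ in the first product and $\tilde n<m\le\tilde n+k$ in the second. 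Reindexing those boundary factors by $j\in\{0,1,\dots,k-1\}$ and using the charge identity $\ell(\mu)+s=\ell(\nu)+s'$ to compute the net multiplicity of each $(2j+1)$ as $-(k-j)$, the ratio $\Psi_{\mu,\nu}(k)/H_{\odd}(\Phi(\mu,\nu,k))$ collapses to $\pm 1/\prod_{j=0}^{k-1}(2j+1)^{k-j}=\pm 1/H((k,k-1,\dots,1))$, which combined with~\eqref{eq:R0} gives the first equality in~\eqref{eq:psiR} up to a global sign.

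The main obstacle will be sign bookkeeping: one must show that the parity of the total number of negative boundary factors (all coming from the first product) equals $h_{\Phi(\mu,\nu,k)}-h_{\Phi(\mu,\nu,0)}$ modulo $2$. Since the $(|\lambda|-|\bar\lambda|)/2$ contributions to $h_\lambda$ cancel in the difference, this reduces to matching the boundary-factor count with $\htt_2(\Phi(\mu,\nu,k)/\bar\lambda)-\htt_2(\Phi(\mu,\nu,0))$ modulo $2$. This can be handled by tracking the effect of shifting $M_\nu$ by $k$ on the inversion-count description of $\htt_2$ in the interleaved Maya word; the combinatorics are technical but essentially local, and once verified they yield the desired sign.
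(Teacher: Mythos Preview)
Your proposal is correct and follows essentially the same route as the paper: extend the odd-hook product formula of Lemma~\ref{lem:formulahooklengths0} to shifts $(s,s'+k)$, substitute $n\mapsto n-k$ to recognise $\Psi_{\mu,\nu}(k)$ up to a boundary ratio, identify that ratio with $H((k,k-1,\dots,1))$, and handle the sign and leading coefficient by parity arguments. The paper makes your two hand-waved steps fully rigorous---the sign match (your ``inversion-count'' claim) and the parity $f+h_{\Phi(\mu,\nu,0)}\equiv|\nu|$---via short inductions on $|(\mu,\nu)|$, and proves the boundary-ratio identity by a global factor count (forcing each $(2j+1)$ to appear with exponent $k-j$) rather than a direct charge computation, but the architecture is the same.
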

We note that the last equality in~\eqref{eq:psiR} is precisely~\eqref{eq:R0}. In particular,~\eqref{eq:psiR} proves that $R_{\Phi(\mu,\nu,k)}(0)$ is polynomial in $k$. A short argument, based on the following lemma, now takes us from Proposition~\ref{prop:psi} to Theorem~\ref{thm:coeffpoly}, after which we give the longer argument required to prove Proposition~\ref{prop:psi}. 

\begin{lemma}\label{lem:rationalfunctionintegers}
Suppose that $f$ is a rational function with rational coefficients. If $f(n)$ is an integer for all integers $n\geq 1$, then $f$ is a polynomial.
\end{lemma}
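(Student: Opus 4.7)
The plan is a standard ``polynomial division plus growth'' argument. First, using the Euclidean algorithm in $\mathbb{Q}[x]$, write
\begin{equation*}
	f(x) = g(x) + \frac{r(x)}{Q(x)}
\end{equation*}
where $g,r,Q\in\mathbb{Q}[x]$, $\gcd(r,Q)=1$, and $\deg r < \deg Q$. The goal becomes to show that $r\equiv 0$, for then $f=g$ is a polynomial.

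Next I would exploit two facts about the ``proper'' remainder $h(n):=r(n)/Q(n)$. On one hand, since $\deg r <\deg Q$, clearly $h(n)\to 0$ as $n\to\infty$. On the other hand, I need to control denominators on the polynomial side: let $D$ be a positive integer such that $D\cdot g\in\mathbb{Z}[x]$ (a common denominator of the coefficients of $g$). Then $Dg(n)\in\mathbb{Z}$ for every integer $n$. By hypothesis $f(n)\in\mathbb{Z}$ for $n\geq 1$ (in particular $Q(n)\neq 0$ for $n\geq 1$, since otherwise $f(n)$ would fail to be defined), so
\begin{equation*}
	D\,h(n) = D\,f(n)-D\,g(n)\in\mathbb{Z} \qquad \text{for all } n\geq 1.
\end{equation*}

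Combining the two observations, $Dh(n)$ is an integer that tends to $0$, so there exists $N_0$ with $|Dh(n)|<1$ for $n\geq N_0$, forcing $Dh(n)=0$, i.e.\ $r(n)=0$ for all $n\geq N_0$. A nonzero polynomial has only finitely many roots, hence $r\equiv 0$, which completes the proof.

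There is no real obstacle here; the only subtlety worth flagging is to justify that $Q$ has no positive integer zeros (handled by the coprimality of $r$ and $Q$ together with the existence of $f(n)$ for $n\geq 1$) and to remember that the denominator-clearing constant $D$ depends only on $g$, not on $n$, so the bound ``eventually less than $1$'' really does force vanishing at all sufficiently large integers.
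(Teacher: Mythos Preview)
Your proof is correct and follows essentially the same route as the paper: polynomial division to isolate a proper fractional part, clearing denominators of the polynomial quotient, then using that the scaled proper part is integer-valued yet tends to zero to force the numerator to vanish. You are slightly more careful than the paper in noting $\gcd(r,Q)=1$ and justifying $Q(n)\neq 0$ for $n\ge 1$, but the argument is the same.
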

\begin{proof}
Since $f$ is a rational function with rational coefficients, there are two polynomials $p,q\in~\mathbb{Q}[x]$ such that $f=\frac{p}{q}$. Then, there exist polynomials $r,\tilde{p}\in \mathbb{Q}[x]$ such that $f=r+\frac{\tilde{p}}{q}$ and $\deg(\tilde{p})<\deg(q)$. Hence, there is an integer $N$ such that $N \cdot r(x)\in \mathbb{Z}[x]$, whence $N \cdot r(n)$ is an integer for all integers $n\geq 0$. Combining this with the assumption on $f$, this means that $N \cdot \frac{\tilde{p}}{q}$ maps positive integers to integers. We also have that $N \cdot \frac{\tilde{p}}{q}(n) \to 0$ as $n\to\infty$ and so there are infinitely many integers $n$ such that $\tilde{p}(n)=0$. However, $\tilde{p}$ is a polynomial, so it  should be that $\tilde{p}\equiv0$. Hence $f=r$ and therefore $f$ is a polynomial.
\end{proof}

\begin{proof}[Proof of Theorem~\ref{thm:coeffpoly}]
Combining~\eqref{eq:psiR} with~\eqref{eq:coefficientsR0} yields 
\begin{equation*}
	r_{\Phi(\mu,\nu,k),j}=\binom{\lvert \mu \rvert + \lvert \nu \rvert}{j} \sum_{(\tilde{\mu},\tilde{\nu})<_j(\mu,\nu)} \frac{F^{(2)}_{\tilde{\mu},\tilde{\nu}} \,F^{(2)}_{(\mu,\nu)/(\tilde{\mu},\tilde{\nu})}}{F^{(2)}_{\mu,\nu}} \frac{\Psi_{\mu,\nu}(k)}{\Psi_{\tilde{\mu},\tilde{\nu}}(k)}
\end{equation*}
and since all $\Psi_{\mu,\nu}$ are polynomials of degree~$|\mu|+|\nu|$, the above expression is a rational function in $k$ of degree at most $j$. However, we know that for all $k$, the coefficient $r_{\Phi(\mu,\nu,k),j}$ is an integer by~\cite[Corollary~7.1]{Bonneux_Hamaker_Stembridge_Stevens}. By Lemma~\ref{lem:rationalfunctionintegers} we therefore conclude that $r_{\Phi(\mu,\nu,k),j}$ is in fact a polynomial in~$k$ of degree at most~$j$.
\end{proof} 

All that remains to be proven is Proposition~\ref{prop:psi} itself.

\begin{proof}[Proof of Proposition~\ref{prop:psi}]
In part 1 we prove identity~\eqref{eq:psiR}, and then in part 2 we derive the degree and the leading coefficient of $\Psi_{\mu,\nu}$. 
	
\textbf{Part 1.} Take integers $s,s'\geq0$ such that $\ell(\mu)+s=\ell(\nu)+s'$. The odd hooks lengths of the partition with quotient $(\mu,\nu)$ and core 
$k> 0$ may be found 
in the same way as in Lemma~\ref{lem:formulahooklengths0}. In fact  we simply 
need to  replace $s'$ by $s'+k$ to find 
\begin{multline*}
H_{\odd}(\Phi(\mu,\nu,k))=\left[\prod_{m\in M_\mu +s} \prod_{\substack{n<m \\ n \not\in M_\nu+s'+k}} \big( 2(m-n) -1 \big) \right] \\ \times \left[\prod_{n\in M_\nu +s'+k} \prod_{\substack{m\leq n \\ m \not\in M_\mu+s}} \big( 2(n-m) +1 \big) \right]
\end{multline*}
where $\Phi(\mu,\nu,k)$ represents the partition with quotient $(\mu,\nu)$ and core  length $k$. Replacing all~$n$ with $l+k$, we obtain
\begin{multline*}
H_{\odd}(\Phi(\mu,\nu,k))=\left[\prod_{m\in M_\mu +s} \prod_{\substack{l+k<m \\ l \not\in M_\nu+s'}} \big( 2(m-(l+k)) -1 \big) \right] \\ \times \left[\prod_{l\in M_\nu +s'} \prod_{\substack{m\leq l+k \\ m \not\in M_\mu+s}} \big( 2((l+k)-m) +1 \big)\right].
\end{multline*}
Now we recognize that this expression is  similar to $\Psi_{\mu,\nu}(k)$, defined in~\eqref{eq:defPsi}, except for certain factors. Namely,
\begin{equation}\label{eq:prooftechnical0}
H_{\odd}(\Phi(\mu,\nu,k)) = (-1)^{h_{\Phi(\mu,\nu,0)}} \, \Psi_{\mu,\nu}(k) \cdot \frac{\prod_{l\in M_\nu +s'} \prod_{\substack{l < m\leq l+k \\ m \not\in M_\mu+s}} \big( 2((l+k)-m) +1 \big)}{\prod_{m\in M_\mu +s} \prod_{\substack{m-k\leq l<m \\ l \not\in M_\nu+s'}} \big( 2(m-(l+k)) -1 \big)}.
\end{equation}
Now note that all the factors in the numerator are positive, whereas all factors in the denominator are negative. Multiplying the latter factors by $-1$ then yields
\begin{equation}\label{eq:prooftechnical1}
H_{\odd}(\Phi(\mu,\nu,k)) = (-1)^{\beta(\mu,\nu,k)} \, \Psi_{\mu,\nu}(k) \cdot \frac{\prod_{l\in M_\nu +s'} \prod_{\substack{l < m\leq l+k \\ m \not\in M_\mu+s}} \big( 2((l+k)-m) +1 \big)}{\prod_{m\in M_\mu +s} \prod_{\substack{m-k\leq l<m \\ l \not\in M_\nu+s'}} \big( 2((l+k)-m) +1 \big)}
\end{equation}
where
\begin{equation}\label{eq:defalpha}
\beta(\mu,\nu,k):=h_{\Phi(\mu,\nu,0)}+\# \left\{(m,l)\in \mathbb{Z}^2 \mid m\in M_\mu+s, \, l\not\in M_\nu+s', \, m-k \leq l <m\right\}.
\end{equation}
We now make the following two claims.
\begin{description}
	\item[Claim 1:] the parity of $\beta(\mu,\nu,k)$ and $h_{\Phi(\mu,\nu,k)}$ are the same.
	\item[Claim 2:] we have
	\begin{equation*}
	\frac{\prod_{l\in M_\nu +s'} \prod_{\substack{l < m\leq l+k \\ m \not\in M_\mu+s}} \big( 2((l+k)-m) +1 \big)}{\prod_{m\in M_\mu +s} \prod_{\substack{m-k\leq l<m \\ l \not\in M_\nu+s'}} \big( 2((l+k)-m) +1 \big)} = H((k,k-1,\dots,2,1)).
	\end{equation*}
\end{description}
Plugging these two claims into~\eqref{eq:prooftechnical1} yields the following alternative writing of~\eqref{eq:psiR}: 
\begin{equation*}
	H_{\odd}(\Psi(\mu,\nu,k)) = (-1)^{h_{\Phi(\mu,\nu,0)}} \, \Psi_{\mu,\nu}(k) H((k,k-1,\dots,2,1)).
\end{equation*}
We therefore only have to prove both claims to establish part 1.

\noindent \textbf{Proof of Claim 1.}
We use induction on $\lvert (\mu,\nu) \rvert$. If $\lvert (\mu,\nu) \rvert=0$, we have that $(\mu,\nu)=(\emptyset,\emptyset)$, so $\Phi(\mu,\nu,k)=(k,k-1,\dots,2,1)$ for all $k\geq0$. Hence $h_{\Phi(\mu,\nu,k)}=0$ for all $k$. One also  sees that
\begin{equation*}
	\# \left\{(m,l)\in \mathbb{Z}_{\geq 0}^2 \mid m\in M_\emptyset+s, \, l\not\in M_\emptyset+s', \, m-k \leq l <m\right\}=0
\end{equation*}
because $s'-s=k$. This establishes the induction basis.

Next take $\lvert (\mu,\nu) \rvert>0$ and fix $k\geq0$. Then there either exists a partition $\tilde{\mu}$ such that $(\tilde{\mu},\nu)\lessdot (\mu,\nu)$, or there exists a partition $\tilde{\nu}$ such that $(\mu,\tilde{\nu})\lessdot (\mu,\nu)$. As both situations can be proven similarly, we only give the proof in the first situation. In this case, $\mu$ is obtained from $\tilde{\mu}$ by moving a dot in the Maya diagram from some position $t$ to $t+1$. We then see that $\Phi(\mu,\nu,k)$ is obtained from $\Phi(\tilde{\mu},\nu,k)$ by adding a horizontal domino tile if and only if $t-k \not\in M_\nu + s'$. In other words, we have
\begin{equation}\label{eq:claim1proof1}
	h_{\Phi(\mu,\nu,k)} \equiv h_{\Phi(\tilde{\mu},\nu,k)}+ 1 \mod 2 \quad \Leftrightarrow \quad t-k\not\in M_\nu+s'
\end{equation}
for all $k$. Subsequently, define the numbers  
\begin{align*}
	&N_{(\tilde{\mu},\nu)}
		= \# \left\{(m,l)\in \mathbb{Z}^2 \mid m\in M_{\tilde{\mu}}+s, \, l\not\in M_\nu+s', \, m-k \leq l<m\right\} \\
	&N_{(\mu,\nu)}
		= \# \left\{(m,l)\in \mathbb{Z}^2 \mid m\in M_{\mu}+s, \, l\not\in M_\nu+s', \, m-k \leq l <m\right\}
\end{align*}
and observe that
\begin{equation}\label{eq:claim1proof2}
	N_{(\mu,\nu)} \equiv N_{(\tilde{\mu},\nu)} + 1 \mod 2 \quad \Leftrightarrow \quad t-k\not\in M_\nu+s' \text{ or } t \not\in M_\nu+s' \text{ but not both.}
\end{equation}
Next, reconsider \eqref{eq:defalpha} such that combining \eqref{eq:claim1proof1} for $k=0$ and \eqref{eq:claim1proof2} leads to 
\begin{equation}\label{eq:claim1proof3}
	\beta(\mu,\nu,k) \equiv \beta(\tilde{\mu},\nu,k)+ 1 \mod 2 \quad \Leftrightarrow \quad t-k\not\in M_\nu+s'.
\end{equation}
Hence, using the induction hypothesis that the parity of $h_{\Phi(\tilde{\mu},\nu,k)}$ and $\beta(\tilde{\mu},\nu,k)$ are the same, we see that $h_{\Phi(\mu,\nu,k)}$ and $\beta(\mu,\nu,k)$ have the same parity because of \eqref{eq:claim1proof1} and \eqref{eq:claim1proof3}. This proves Claim 1. $\blacktriangleright$

\noindent \textbf{Proof of Claim 2.}
We first need the following observation about counting factors. The number of boxes in the Young diagram of  $\Phi(\mu,\nu,k)$ is equal to $k(k+1)/2+2(\lvert \mu \rvert + \lvert \nu \rvert)$. There are $\lvert \mu \rvert + \lvert \nu \rvert$ boxes with an even hook length, while the other hook lengths are odd. 
By definition, $\Psi_{\mu,\nu}$ consists of $\lvert \mu \rvert + \lvert \nu \rvert$ factors because it only runs over the odd hook lengths of a partition with empty core. The expansion $H_{\odd}(\Phi(\mu,\nu,k))$ has $k(k+1)/2+ \lvert \mu \rvert + \lvert \nu \rvert$ factors. Hence, combining both results, the fraction
\begin{equation*}
\frac{\prod_{l\in M_\nu +s'} \prod_{\substack{l < m\leq l+k \\ m \not\in M_\mu+s}} \big( 2((l+k)-m) +1 \big)}{\prod_{m\in M_\mu +s} \prod_{\substack{m-k\leq l<m \\ l \not\in M_\nu+s'}} \big( 2((l+k)-m) +1 \big)}
\end{equation*}
must have $k(k+1)/2$ factors more in the numerator than in the denominator 
so that  the number of factors in \eqref{eq:prooftechnical0} is preserved. If we now interchange the products in both the numerator and denominator, we obtain
\begin{equation}\label{eq:afterinterchanging}
	\frac{\prod_{i=1}^k \prod_{\substack{l \in M_\nu +s' \\ l+i \not\in M_\mu+s}} \big( 2(k-i)+1\big)}{\prod_{i=1}^k \prod_{\substack{m \in M_\mu +s \\ m-i \not\in M_\nu+s'}} \big( 2(k-i)+1\big)}.
\end{equation}
The counting of factors given above  implies that for any $k$, we have that
\[\sum_{i=1}^k \left(\#\{  l \in M_\nu+s' \mid l+i \not\in M_\mu+s\} - \#\{  m \in M_\mu+s \mid m-i \not\in M_\nu+s'\}\right) = \frac{k(k+1)}{2}.\]
Since all terms in this sum are independent of $k$ and the above identity holds for all $k$, we conclude that
\[\#\{  l \in M_\nu+s' \mid l+i \not\in M_\mu+s\} - \#\{  m \in M_\mu+s \mid m-i \not\in M_\nu+s'\}=i\]
for all $i=1,2\dots,k$. Therefore, \eqref{eq:afterinterchanging} is equal to
\[\prod_{i=1}^k (2(k-i)+1)^i =  H((k,k-1,\dots,2,1))\]
and we have proven Claim 2.$\blacktriangleright$

\textbf{Part 2.} By construction, the degree of $\Psi_{\mu,\nu}$ is equal to the number of factors that appear in the double products in~\eqref{eq:defPsi}, which in turn is equal to the number of odd hooks in the partition $\Phi(\mu,\nu,0)$ by Lemma~\ref{lem:formulahooklengths0}. As described in the proof of Claim 2, there are $\lvert \mu \rvert + \lvert \nu \rvert$ such terms. This establish the degree statement. Next, one immediately observes that the leading coefficient of~$\Psi_{\mu,\nu}$ is $(-1)^\gamma \, 2^{\lvert \mu \rvert + \lvert \nu \rvert}$, with
\begin{equation}\label{eq:pwithnu}
	\gamma=h_{\Phi(\mu,\nu,0)} + \# \left\{ (m,n) \in \mathbb{Z}^2 \mid m\in M_\mu+s, n\not\in M_\nu + s', n<m\right\}.
\end{equation} 
Hence it is sufficient to show that $\gamma$ and $\lvert \nu \rvert$ are equal in parity. We prove this in a similar fashion as we proved Claim 1, that is by induction on $\lvert (\mu,\nu) \rvert$.

If $(\mu,\nu)=(\emptyset,\emptyset)$, then $\lvert \nu \rvert=\gamma=0$ and hence the result certainly holds. Now suppose that $(\tilde{\mu},\nu)\lessdot (\mu,\nu)$ and that the claim holds for $(\tilde{\mu},\nu)$. Then the transition from $(\tilde{\mu},\nu)$ to $(\mu,\nu)$ is represented by a dot moving from a position $t$ to a position $t+1$ in the Maya diagram of~$\mu$. It is easy to see that both terms in~\eqref{eq:pwithnu} change parity in this transition if and only if $t\not\in M_\nu+s'$. Hence $\gamma$ does not change in parity. Since $\lvert \nu \rvert$ does not change at all, the claim also holds for~$(\mu,\nu)$.

Now suppose that $(\mu,\tilde{\nu})\lessdot (\mu,\nu)$ and that the claim holds for $(\mu,\tilde{\nu})$. Clearly, $\lvert \tilde{\nu}\rvert$ and $\lvert \nu \rvert$ differ in parity. If the transition from $(\mu,\tilde{\nu})$ to $(\mu,\nu)$ is represented by a dot moving from position $t$ to $t+1$ in the second Maya diagram, then it is easy to see that the first term in~\eqref{eq:pwithnu} changes parity if and only if $t+1 \not\in M_\nu+s$, and the second term if and only if $t+1\in M_\nu+s$. Therefore~$\gamma$ changes parity, so again $\lvert \nu \rvert$ and $\gamma$ are equal in parity. 
\end{proof}

\subsection{The subleading coefficient in terms of the content}\label{sec:coeffsubleading}
By definition $R_{\lambda}$ is a monic polynomial of degree $|\mu|+|\nu|$. If we set $j=1$ in~\eqref{eq:coefficients}, we obtain a formula for the subleading coefficient $r_{\lambda,1}$. However, there is another appealing expression involving partition data. For this, we note that in the standard literature on integer partitions~\cite{MacDonald}, the \textbf{content} of each box $(i,j)$ in the Young diagram of $\lambda$ is defined as $j - i$. We denote the sum of contents over all boxes by
\begin{equation}\label{eq:contents}
c(\lambda) = \sum_{(i,j)\in\lambda}(j-i)=\frac{1}{2} \sum_{i=1}^{\ell(\lambda)} \lambda_i(\lambda_i-(2i-1))
\end{equation}  
where the last equality follows directly from summing over rows. We have the following result and visualize some examples in Figure~\ref{fig:r1}.

\begingroup
\setlength{\tabcolsep}{40pt}
\renewcommand{\arraystretch}{1.5}
\begin{figure}[t]
	\centering
	\begin{tabular}{c c c}
		
		$\lambda=(4,2,1)$
		&$\lambda=(5,4,2)$ 
		&$\lambda=(2,2,2)$
		\\
		
		\begin{ytableau}
			$0$ & $1$ & $2$ & $3$ \\
			$-1$ & $0$ \\
			
		\end{ytableau}
		&\begin{ytableau}
			$0$ & $1$ & $2$ & $3$ & $4$ \\
			$-1$ & $0$  & $1$ & $2$ \\
			$-2$ & $-1$ 
		\end{ytableau} 
		&\begin{ytableau}
			$0$ & $1$ \\
			$-1$ & $0$ \\
			$-2$  & $-1$ 
		\end{ytableau}
		\\
		
		$r_{(4,2,1),1} = -5$
		&$r_{(5,4,2),1} = -9$
		&$r_{(2,2,2),1} = 3$
	\end{tabular}
	\caption{Examples of the values $r_{\lambda,1}$ where the boxes of the Young diagrams contain the respective contents.}
	\label{fig:r1}
\end{figure}
\endgroup

\begin{proposition}\label{prop:subleadingcoeff}
For any partition $\lambda$ with non-empty quotient, we have that the subleading coefficient of the polynomial $R_\lambda$, defined in~\eqref{eq:WHdecomposition} and~\eqref{eq:Rcoeff}, is given by 
\begin{equation}\label{eq:subleadingcoeff}
	r_{\lambda,1}
		= -c(\lambda)
\end{equation}
where $c(\lambda)$ is the sum of contents defined in~\eqref{eq:contents}.
\end{proposition}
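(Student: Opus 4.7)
The plan is to read off $r_{\lambda,1}$ from Theorem~\ref{thm:WHcoeff} and then invoke the classical Frobenius formula for the character value of the symmetric group at a transposition. Writing out the decomposition $\He_\lambda(x)=x^{|\bar\lambda|}R_\lambda(x^2)$ and comparing with~\eqref{eq:WHcoeff}, the coefficient of $x^{|\lambda|-2}$ in $F_\lambda\He_\lambda(x)$ immediately yields
\begin{equation*}
F_\lambda\,r_{\lambda,1} \;=\; -\,\frac{|\lambda|(|\lambda|-1)}{2}\,a(\lambda,1),
\end{equation*}
which is valid precisely when $|\mu|+|\nu|\ge 1$, matching the hypothesis of the proposition (otherwise $R_\lambda$ is a constant).

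The key input is then the well-known identity
\begin{equation*}
a(\lambda,1)\;=\;\chi_\lambda(2,1^{|\lambda|-2})\;=\;\frac{F_\lambda\,c(\lambda)}{\binom{|\lambda|}{2}},
\end{equation*}
expressing the character at a transposition as (the sum of contents times) the dimension divided by $\binom{n}{2}$. This is a standard consequence of the Murnaghan--Nakayama rule, or equivalently a corollary of the fact that the Jucys--Murphy sum acts on the Specht module $S^\lambda$ by the scalar $c(\lambda)$; I would simply cite it from~\cite{MacDonald} or~\cite{Stanley_EC2}. Substituting this into the previous display instantly gives $r_{\lambda,1}=-c(\lambda)$, which is~\eqref{eq:subleadingcoeff}.

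There is no real obstacle in this approach beyond referencing the character formula. If one prefers a self-contained argument avoiding the Frobenius/Murnaghan--Nakayama identity, the alternative is induction on $|\mu|+|\nu|$ using the coefficient recurrence~\eqref{eq:recurrencecoefficients} at $j=1$: one must show that
\begin{equation*}
-\,\frac{|\lambda|(|\lambda|-1)}{2}\sum_{(\tilde\mu,\tilde\nu)\lessdot(\mu,\nu)} (-1)^{\htt_2(\lambda/\tilde\lambda)}\,\frac{F_{\tilde\lambda}}{F_\lambda}\;=\;-c(\lambda),
\end{equation*}
using the induction hypothesis $r_{\tilde\lambda,0}=1$ (which trivially holds). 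The content $c(\lambda)$ transforms cleanly under adding or removing a domino: if $\tilde\lambda$ is obtained from $\lambda$ by removing a horizontal domino in row $i$ at columns $j,j+1$, then $c(\lambda)-c(\tilde\lambda) = 2j-2i+1$, and similarly $-2i+1$ in the vertical case, so the identity reduces to a weighted count over domino removals. In either route, the crux is the same content-character coincidence, and citing it keeps the proof short and transparent.
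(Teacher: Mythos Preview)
Your primary approach is correct: reading off the $j=1$ term from Theorem~\ref{thm:WHcoeff} and then invoking the Frobenius formula $\chi_\lambda(2,1^{|\lambda|-2})=F_\lambda\,c(\lambda)/\binom{|\lambda|}{2}$ gives the result in one line. In fact the paper records exactly this argument, but only as an alternative in a remark following its main proof.

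The paper's actual proof takes a different, more self-contained route. It starts from the same recurrence~\eqref{eq:recurrencecoefficients} at $j=1$, arriving at
\[
r_{\lambda,1}=-\frac{|\lambda|(|\lambda|-1)}{2}\sum_{\tilde\lambda}(-1)^{\htt_2(\lambda/\tilde\lambda)}\frac{F_{\tilde\lambda}}{F_\lambda},
\]
but instead of citing the character formula it expands each ratio $F_{\tilde\lambda}/F_\lambda$ explicitly via~\eqref{eq:Flambdahooklengths} in terms of the degree vector $n_\lambda$, obtaining a closed form for the sum. It then matches this against the degree-vector expression for $c(\lambda)$ using a dedicated combinatorial identity (Lemma~\ref{lem:identity} in the appendix), which it proves from scratch by partial fractions and induction. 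So the paper effectively \emph{reproves} the Frobenius identity in this special case rather than citing it. Your approach is shorter and perfectly legitimate given standard references; the paper's buys independence from the representation-theoretic fact at the cost of an appendix lemma. Your sketched ``self-contained alternative'' via content differences under domino removal is not the paper's method and, as written, does not close the argument: the identity you would need is precisely the one the paper isolates as Lemma~\ref{lem:identity}.
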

\begin{proof}
Setting $j=1$ in~\eqref{eq:recurrencecoefficients}, 
dividing both sides by $F_{\lambda}$ and using 
 $r_{\tilde{\lambda},0}=1$ yields
\begin{equation}\label{eq:r1}
	r_{\lambda,1}
		= -\frac{|\lambda|(|\lambda|-1)}{2} \sum_{\tilde{\lambda}} (-1)^{\htt_2(\lambda/\tilde{\lambda})} \frac{F_{\tilde{\lambda}}}{F_{\lambda}}
\end{equation}
where  the sum runs over all partitions $\tilde{\lambda}$ that are obtained by removing a domino tile from $\lambda=(\lambda_1,\lambda_2,\dots,\lambda_{\ell(\lambda)})$. The main idea of the proof is to expand the fraction $F_{\tilde{\lambda}}/F_{\lambda}$ using the hook formulae in~\eqref{eq:Flambdahooklengths} written using 
the degree vector ${n_{\lambda}=(n_1,n_2,\dots,n_{\ell(\lambda)})}.$ 

If $\lambda_m\geq\lambda_{m+1}+2$, then a horizontal domino can be removed from row $m$ of $\lambda$. In this case 
\begin{equation}\label{eq:r1proof1}
	\frac{F_{\tilde{\lambda}}}{F_{\lambda}} 
		= \frac{1}{|\lambda|(|\lambda|-1)} n_m(n_m-1) \prod_{\substack{i=1 \\ i \neq m}}^{\ell(\lambda)} \frac{n_m-n_i-2}{n_m-n_i}
\end{equation}
and we note that $(-1)^{\htt(\lambda/\tilde{\lambda})}=1$. If $\lambda_m=\lambda_{m+1}>\lambda_{m+2}$, then a vertical domino can be removed from row $m$ and $m+1$ of $\lambda$. We then find
\begin{equation}\label{eq:r1proof2}
	\frac{F_{\tilde{\lambda}}}{F_{\lambda}} 
		= \frac{-1}{|\lambda|(|\lambda|-1)} n_m(n_m-1) \prod_{\substack{i=1 \\ i \neq m}}^{\ell(\lambda)} \frac{n_m-n_i-2}{n_m-n_i}
\end{equation}
and $(-1)^{\htt(\lambda/\tilde{\lambda})}=-1$. Hence if we plug~\eqref{eq:r1proof1} and~\eqref{eq:r1proof2} into~\eqref{eq:r1}, we get
\begin{equation}\label{eq:r1proof3}
	r_{\lambda,1}
		= -\frac{1}{2} \sum_{m=1}^{\ell(\lambda)} n_m(n_m-1) \prod_{\substack{i=1 \\ i \neq m}}^{\ell(\lambda)} \frac{n_m-n_i-2}{n_m-n_i}
\end{equation}
where it is possible to sum from $m=1$ until $\ell(\lambda)$ because we can either remove a horizontal domino in row $m$, or we can remove a vertical domino in rows $m$ and $m+1$, or the term in the sum vanishes because it is not possible to remove any dominoes. 

Finally, we argue that~\eqref{eq:r1proof3} equals $-c(\lambda)$. To show this, we use the last expression for $c(\lambda)$ in~\eqref{eq:contents} and write it in terms of the degree vector via $\lambda_i=n_i-\ell(\lambda)+i$ for $i=1,2,\dots,\ell(\lambda)$. We find
\begin{equation}\label{eq:contentdegreevector}
	-c(\lambda)
		= -\frac{1}{2} \sum_{i=1}^{\ell(\lambda)} (n_i-\ell(\lambda)+i)(n_i-\ell(\lambda)-i+1).
\end{equation}
We now use the combinatorial identity in Lemma~\ref{lem:identity} from the appendix, setting $(x_1,x_2,\dots,x_n)$ equal to the degree vector of $\lambda$. We therefore find that the expression for $r_{\lambda,1}$ in~\eqref{eq:r1proof3} equals the expression~$-c(\lambda)$ in \eqref{eq:contentdegreevector}.
\end{proof}

\begin{remark}
An alternative way to derive the result $r_{\lambda,1 }=-c(\lambda)$ is via the expression \eqref{eq:WHcoeff}. We set $j=1$ to obtain 
\begin{equation*}
	F_{\lambda} r_{\lambda,1}
		= - \binom{|\lambda|}{2} a(\lambda,1)
\end{equation*}
where $a(\lambda,1)$ represent the character value of the conjugacy class of the cycle type $(2,1^{|\lambda|-2})$. This character value can be expressed explicitly in terms of the partition entries (see for example~\cite[I.7,~Ex.~7]{MacDonald}). Combining that result with \cite[I.1,~Ex.~3]{MacDonald} yields the expression
\begin{equation*}
	a(\lambda,1) = \binom{|\lambda|}{2}^{-1} \,F_{\lambda} \,c(\lambda)
\end{equation*}
and so we again obtain $r_{\lambda,1 }=-c(\lambda)$.
\end{remark}

\begin{remark}
Box $(i,j)$ belongs to the Young diagram of $\lambda$ if and only if box $(j,i)$ belongs to the Young diagram of the conjugate partition $\lambda'$. Therefore,~\eqref{eq:subleadingcoeff} trivially shows  that if $\lambda=\lambda'$, then $r_{\lambda,1}=0$, in agreement with Corollary~\ref{cor:Rlambda}. The converse is not true: there are partitions $\lambda$ that are not self-conjugate but 
the corresponding Wronskian Hermite polynomial has  $r_{\lambda,1}=0$. The smallest examples of such partitions are of size 15, one of which is $\lambda=(5^2,2,1^3)$. 
\end{remark}

We now rewrite Proposition~\ref{prop:subleadingcoeff} in terms of the core and quotient of the partition. 

\begin{proposition}\label{prop:contentquotient}
For any partition $\lambda$ with non-empty quotient $(\mu,\nu)$ and core ${(k,k-1,\dots,2,1)}$, we have that the subleading coefficient of the polynomial $R_\lambda$, defined in~\eqref{eq:WHdecomposition} and~\eqref{eq:Rcoeff}, is given by
\begin{equation}\label{eq:contentquotient}
	r_{\lambda,1}
		= (|\mu|-|\nu|)(2k+1) + 4(c(\mu)+c(\nu))
\end{equation}
where $c(\mu)$ and $c(\nu)$ are the sum of the contents of the quotient partitions respectively. 
\end{proposition}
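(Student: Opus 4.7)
The plan is to combine Proposition~\ref{prop:subleadingcoeff}, which asserts $r_{\lambda,1}=-c(\lambda)$, with an explicit evaluation of $c(\lambda)$ in terms of the data $(\mu,\nu,k)$. Thus the task reduces to establishing a closed-form expression for $c(\lambda)$ as a linear combination of $c(\mu)+c(\nu)$ and $(|\mu|-|\nu|)(2k+1)$; substituting into $r_{\lambda,1}=-c(\lambda)$ then yields \eqref{eq:contentquotient}.

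The workhorse will be the Maya diagram. Shifting $M_\lambda$ so that it has charge $L$ with filled positions $N_1>\cdots>N_L$, one has
\[
c(\lambda)=\tfrac12\sum_i(N_i-L)(N_i-L+1)-\tfrac{L(L^2-1)}{6}.
\]
The staircase core $\bar\lambda$ is self-conjugate, hence $c(\bar\lambda)=0$. Since $\lambda$ is built from $\bar\lambda$ by $|\mu|+|\nu|$ domino additions, and each domino corresponds to moving one filled position $P\to P+2$ in the Maya diagram (with the charge $L$ preserved), a direct expansion of the formula above shows that each addition contributes
\[
\Delta c(\lambda)=2P-2L+3.
\]

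Under the 2-modular decomposition, each domino move falls into one of two classes: either it corresponds to a shift $Q\to Q+1$ in $M^{(0)}=M_\mu+s$, in which case $P=2Q$ and a box is added to $\mu$, or it corresponds to a shift $R\to R+1$ in $M^{(1)}=M_\nu+s'$, in which case $P=2R+1$ and a box is added to $\nu$. The charges $L_0=\ell(\mu)+s$ and $L_1=\ell(\nu)+s'$ satisfy $L_0+L_1=L$ and $L_1-L_0=k$. The new box added in a $\mu$-move has content $(Q+1)-L_0$, so summing over all such moves gives $c(\mu)=\sum(Q+1-L_0)$, which telescopes to $\sum_{\mu\text{-moves}}Q_{\text{before}}=c(\mu)+(L_0-1)|\mu|$. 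An analogous identity holds for the $\nu$-moves with $L_0$ replaced by $L_1$.

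Summing $\Delta c(\lambda)=4Q-2L+3$ over $\mu$-moves and $\Delta c(\lambda)=4R+2-2L+3$ over $\nu$-moves, substituting the two telescoping identities, and collapsing the dependence on $L_0,L_1,L$ via $2L_0=L-k$ and $2L_1=L+k$, produces a closed-form expression for $c(\lambda)$ in terms of $c(\mu),c(\nu),|\mu|,|\nu|,k$ alone. Combining with $r_{\lambda,1}=-c(\lambda)$ from Proposition~\ref{prop:subleadingcoeff} then yields the claim. The main technical hurdle will be the bookkeeping in the double sum (in particular the telescoping step relating $\sum Q_{\text{before}}$ to $c(\mu)$); the vanishing $c(\bar\lambda)=0$ is the essential simplification that makes the final cancellation clean.
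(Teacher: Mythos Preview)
Your argument is correct and complete as outlined; carrying it through gives
\[
c(\lambda)=4\bigl(c(\mu)+c(\nu)\bigr)-(2k+1)\bigl(|\mu|-|\nu|\bigr),
\]
whence $r_{\lambda,1}=(|\mu|-|\nu|)(2k+1)-4(c(\mu)+c(\nu))$. (This disagrees with \eqref{eq:contentquotient} in the sign of the last term; checking against the paper's own Table~\ref{tab:exampleWH}, where $r_{\Phi((4,1),(3),k),1}=2(2k-15)=4k-30$, confirms that the minus sign is the correct one, so the discrepancy is a typo in the stated formula rather than an error in your method.)

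Your route is genuinely different from the paper's. The paper argues \emph{statically}: it writes $-c(\lambda)$ via \eqref{eq:contentdegreevector} as a quadratic polynomial in the entries of the degree vector $n_\lambda$, expands to obtain \eqref{eq:contentdegreevectorexpanded}, then substitutes the explicit description of each $n_i$ in terms of $k$ and the degree vectors $n_\mu,n_\nu$ coming from the $2$-modular decomposition, and compares with the analogous expansions of $c(\mu)$ and $c(\nu)$. All the work is a single (somewhat opaque) polynomial simplification. Your argument is \emph{dynamic}: you build $\lambda$ from its core by domino additions, use the self-conjugacy of the staircase to get $c(\bar\lambda)=0$ for free, and track the increment $\Delta c(\lambda)=2P-2L+3$ per domino. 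The identification of the content of each box added to $\mu$ (resp.\ $\nu$) as $Q+1-L_0$ (resp.\ $R+1-L_1$) is exactly right and is what makes the dependence on $L_0,L_1,L$ cancel cleanly via $2L_0=L-k$, $2L_1=L+k$. What you call ``telescoping'' is really just the observation that $c(\mu)=\sum_{\text{moves}}(\text{content of new box})$, so $\sum Q_{\text{before}}=c(\mu)+(L_0-1)|\mu|$; no further subtlety is needed there. Your approach is more conceptual and avoids the full quadratic expansion; the paper's has the advantage of reusing \eqref{eq:contentdegreevector} already set up for Proposition~\ref{prop:subleadingcoeff} without introducing the Maya-diagram content formula.
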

\begin{proof}
Proposition~\ref{prop:subleadingcoeff} states that $r_{\lambda,1}=-c(\lambda)$. We express $-c(\lambda)$ in terms of the degree vector, see \eqref{eq:contentdegreevector}, and then expand to obtain 
\begin{equation}\label{eq:contentdegreevectorexpanded}
	r_{\lambda,1}
		=-\frac{1}{2} \sum_{i=1}^{\ell(\lambda)} n_i^2
		+ \left(\ell(\lambda)-\frac{1}{2}\right) \sum_{i=1}^{\ell(\lambda)} n_i		
		- \frac{\ell(\lambda)(\ell(\lambda)-1)(2\ell(\lambda)-1)}{6}.
\end{equation}
By the definition of the core and quotient as given in Section~\ref{sec:coresandquotients}, one can easily write the elements of the degree vector~$n_\lambda$ explicitly in terms of $k$ and the elements in the degree vectors~$n_\mu$ and~$n_\nu$. Doing this for all $n_i$ in \eqref{eq:contentdegreevectorexpanded} yields an expression for $r_{\lambda,1}$ in terms of the quotient $(\mu,\nu)$ and the integer $k$. If one also uses \eqref{eq:contentdegreevector} to expand $c(\mu)$ and $c(\nu)$ in the right-hand side of \eqref{eq:contentquotient}, then the result follows from elementary rewriting. 
\end{proof}

\begin{remark}
Note that identity~\eqref{eq:contentquotient} is invariant under replacing $(\mu,\nu,k)$ with $(\nu,\mu,-k-1)$. This is a direct consequence of the construction of the core and quotient in Section~\ref{sec:coresandquotients}. Also observe that the coefficient $r_{\lambda,1}$ is linear in $k$ and is  constant if $|\mu|=|\nu|$. This agrees with Theorem \ref{thm:coeffpoly}. Moreover, numerical calculations suggest that if $|\mu|=|\nu|$, then $r_{\lambda,|\mu|+|\nu|-1}=0$ for all $k$.
\end{remark}

\begin{remark}
The result of Proposition~\ref{prop:subleadingcoeff} can be extended to the Wronskian Appell polynomials introduced in~\cite{Bonneux_Hamaker_Stembridge_Stevens}. The details are given in Appendix~\ref{sec:leadingcoefficientsWAP}.
\end{remark}
  
%%% Local Variables: 
%%% mode: latex
%%% TeX-master: "../main"
%%% End: 

\section{Asymptotic behaviour for a fixed quotient}\label{sec:asymptotics}
If one fixes the quotient $(\mu,\nu)$ and  writes $\Phi(\mu,\nu,k)$ for the partition with quotient $(\mu,\nu)$ and core $(k,k-1,\dots,2,1)$, then Theorem~\ref{thm:coeffpoly} states that the coefficients of the remainder polynomial~$R_{\Phi(\mu,\nu,k)}$ are polynomials in $k$. 
In Section~\ref{sec:asymptoticpolynomial} this allows us  to deduce the asymptotic behaviour of the remainder polynomial. In turn, this leads to the investigation of the asymptotic behaviour of the zeros given in Section~\ref{sec:asymptoticzeros}.

\subsection{Asymptotic behaviour of the remainder polynomial}\label{sec:asymptoticpolynomial}
If one fixes the quotient of a partition and lets the size of the core grow, one has the following asymptotic behaviour of the associated remainder polynomial.

\begin{theorem}\label{thm:asymptotics}
Fix a pair of partitions $(\mu,\nu)$ and let $\Phi(\mu,\nu,k)$ denote the partition with quotient $(\mu,\nu)$ and core $(k,k-1,\dots,2,1)$. Then
\begin{equation}\label{eq:asymptotics}
	\lim_{k \to + \infty} \frac{R_{\Phi(\mu,\nu,k)}(2kx)}{(2k)^{|\mu|+|\nu|}}
		= (x+1)^{|\mu|} \, (x-1)^{|\nu|}
\end{equation}
uniformly for $x$ in compact subsets of the complex plane, where the speed of convergence is~$O(k^{-1})$.
\end{theorem}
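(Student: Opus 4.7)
The plan is to exploit Theorem~\ref{thm:coeffpoly} in combination with the explicit description of the polynomial $\Psi_{\mu,\nu}$ from Proposition~\ref{prop:psi} to write $R_{\Phi(\mu,\nu,k)}(2kx)/(2k)^{|\mu|+|\nu|}$ as a polynomial in $x$ of fixed degree $|\mu|+|\nu|$ whose coefficients are rational functions of $k$, compute their limits, and recognise the resulting polynomial. Since the degree is fixed, uniform convergence on compact subsets of $\mathbb{C}$ will follow automatically from coefficient-wise convergence, and the $O(k^{-1})$ rate will follow because each coefficient, being the ratio of polynomials in $k$ of matching degrees, differs from its leading behaviour by a term of order $1/k$.

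First I would write
\begin{equation*}
\frac{R_{\Phi(\mu,\nu,k)}(2kx)}{(2k)^{|\mu|+|\nu|}} = \sum_{j=0}^{|\mu|+|\nu|} \frac{r_{\Phi(\mu,\nu,k),j}}{(2k)^j}\, x^{|\mu|+|\nu|-j}.
\end{equation*}
By Theorem~\ref{thm:coeffpoly} the coefficient $r_{\Phi(\mu,\nu,k),j}$ is a polynomial in $k$ of degree at most $j$, so each summand's prefactor converges to a finite value with an $O(k^{-1})$ error. To identify the limit, I would invoke the representation from Corollary~\ref{cor:coefficientsR0}, which expresses $r_{\Phi(\mu,\nu,k),j}$ as a weighted sum of the ratios $\Psi_{\mu,\nu}(k)/\Psi_{\tilde{\mu},\tilde{\nu}}(k)$. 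Proposition~\ref{prop:psi} gives $\Psi_{\mu,\nu}(k) = (-1)^{|\nu|}2^{|\mu|+|\nu|} k^{|\mu|+|\nu|} + O(k^{|\mu|+|\nu|-1})$, so for each $(\tilde{\mu},\tilde{\nu}) <_j (\mu,\nu)$ one has
\begin{equation*}
\lim_{k\to\infty} \frac{\Psi_{\mu,\nu}(k)}{\Psi_{\tilde{\mu},\tilde{\nu}}(k)\, (2k)^{j}} = (-1)^{|\nu|-|\tilde{\nu}|},
\end{equation*}
since $j = |\mu|+|\nu|-|\tilde{\mu}|-|\tilde{\nu}|$. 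Consequently the limiting coefficient of $x^{|\mu|+|\nu|-j}$ equals
\begin{equation*}
\binom{|\mu|+|\nu|}{j} \sum_{(\tilde{\mu},\tilde{\nu})<_j(\mu,\nu)} (-1)^{|\nu|-|\tilde{\nu}|}\, \frac{F^{(2)}_{\tilde{\mu},\tilde{\nu}}\, F^{(2)}_{(\mu,\nu)/(\tilde{\mu},\tilde{\nu})}}{F^{(2)}_{\mu,\nu}}.
\end{equation*}

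Next I would simplify this sum. Group the terms according to $a=|\mu/\tilde{\mu}|$ and $b=|\nu/\tilde{\nu}|$ with $a+b=j$, so that the sign $(-1)^{|\nu|-|\tilde{\nu}|}=(-1)^b$. Expanding $F^{(2)}$ via~\eqref{eq:F2withF1} and using the standard path-counting identities $\sum_{\tilde{\mu}\leq\mu,\,|\mu/\tilde{\mu}|=a} F_{\tilde{\mu}} F_{\mu/\tilde{\mu}} = F_\mu$ and the analogous one for $\nu$, the double sum over $\tilde{\mu}$ and $\tilde{\nu}$ telescopes to a quotient of binomial coefficients, and a short binomial manipulation then yields
\begin{equation*}
\binom{|\mu|+|\nu|}{j} \cdot \frac{\binom{|\mu|+|\nu|-j}{|\mu|-a}\binom{j}{a}}{\binom{|\mu|+|\nu|}{|\mu|}} = \binom{|\mu|}{a}\binom{|\nu|}{b}.
\end{equation*}
Hence the limit of the coefficient of $x^{|\mu|+|\nu|-j}$ is $\sum_{a+b=j}(-1)^b\binom{|\mu|}{a}\binom{|\nu|}{b}$, which is precisely the coefficient of $x^{|\mu|+|\nu|-j}$ in $(x+1)^{|\mu|}(x-1)^{|\nu|}$ by the binomial theorem.

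The main obstacle is the combinatorial collapse in the third paragraph: one has to notice that the specific weighting in Corollary~\ref{cor:coefficientsR0} factors across the two Young lattices, and that after grouping by $(a,b)$ the sums over $\tilde{\mu}$ and $\tilde{\nu}$ decouple so that the path-counting identities can be applied independently. Everything else is bookkeeping: the uniform convergence on compact sets is immediate from coefficient-wise convergence of polynomials of bounded degree, and the $O(k^{-1})$ rate follows because $\Psi_{\mu,\nu}(k)/\Psi_{\tilde{\mu},\tilde{\nu}}(k) = (-1)^{|\nu|-|\tilde{\nu}|}(2k)^j(1+O(k^{-1}))$.
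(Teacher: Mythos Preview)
Your proposal is correct and follows essentially the same route as the paper: expand $R_{\Phi(\mu,\nu,k)}(2kx)/(2k)^{|\mu|+|\nu|}$ coefficient-wise, feed in Corollary~\ref{cor:coefficientsR0} together with the leading behaviour of $\Psi_{\mu,\nu}$ from Proposition~\ref{prop:psi}, decouple the sum over $(\tilde{\mu},\tilde{\nu})$ into independent sums over $\tilde{\mu}$ and $\tilde{\nu}$ via~\eqref{eq:F2withF1}, and collapse each using $\sum_{\tilde{\mu}<_a\mu}F_{\tilde{\mu}}F_{\mu/\tilde{\mu}}=F_\mu$. The only cosmetic difference is that the paper isolates the computation of the leading $k^j$-coefficient of $r_{\Phi(\mu,\nu,k),j}$ as a separate lemma (Lemma~\ref{lem:leadingcoeffr}) before assembling the limit, whereas you do it inline.
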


The main ingredient required for the proof of this asymptotic result is the following lemma. For this, recall from Theorem \ref{thm:coeffpoly} that the coefficient $r_{\Phi(\mu,\nu,k),j}$ is a polynomial in $k$ of degree at most $j$. 

\begin{lemma}\label{lem:leadingcoeffr}
Take a pair of partitions $(\mu,\nu)$ and let $0\leq j \leq \lvert \mu \rvert +\lvert \nu \rvert$. Then the coefficient of~$k^j$ in $r_{\Phi(\mu,\nu,k),j}$ is equal to
\begin{equation}\label{eq:leadingcoeffr}
	2^j \sum_{l=0}^j (-1)^l \binom{\lvert \mu \rvert}{j-l} \binom{\lvert \nu \rvert}{l}.
\end{equation}
\end{lemma}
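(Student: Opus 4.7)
The plan is to read off the leading term in $k$ directly from the closed form for $r_{\Phi(\mu,\nu,k),j}$ given in Corollary~\ref{cor:coefficientsR0}, using the explicit degree and leading coefficient of $\Psi_{\mu,\nu}$ from Proposition~\ref{prop:psi}, and then simplify the resulting sum via a standard identity for $F_\mu$ that counts standard Young tableaux by their truncation.

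The starting point is the identity
\begin{equation*}
	r_{\Phi(\mu,\nu,k),j}
		= \binom{|\mu|+|\nu|}{j} \sum_{(\tilde{\mu},\tilde{\nu})<_j(\mu,\nu)} \frac{F^{(2)}_{\tilde{\mu},\tilde{\nu}} \,F^{(2)}_{(\mu,\nu)/(\tilde{\mu},\tilde{\nu})}}{F^{(2)}_{\mu,\nu}} \cdot \frac{\Psi_{\mu,\nu}(k)}{\Psi_{\tilde{\mu},\tilde{\nu}}(k)},
\end{equation*}
combined with the fact, from Proposition~\ref{prop:psi}, that $\Psi_{\mu,\nu}$ is a polynomial in $k$ of degree $|\mu|+|\nu|$ with leading coefficient $(-1)^{|\nu|}2^{|\mu|+|\nu|}$. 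First I would observe that the ratio $\Psi_{\mu,\nu}(k)/\Psi_{\tilde{\mu},\tilde{\nu}}(k)$ grows like $(-1)^{|\nu|-|\tilde{\nu}|}2^{j}\,k^{j}$ as $k\to\infty$, since the degree difference is exactly $j$. All other factors in the sum are independent of $k$, so the coefficient of $k^j$ in $r_{\Phi(\mu,\nu,k),j}$ is
\begin{equation*}
	2^j \binom{|\mu|+|\nu|}{j} \sum_{(\tilde{\mu},\tilde{\nu})<_j(\mu,\nu)} (-1)^{|\nu|-|\tilde{\nu}|}\, \frac{F^{(2)}_{\tilde{\mu},\tilde{\nu}} \,F^{(2)}_{(\mu,\nu)/(\tilde{\mu},\tilde{\nu})}}{F^{(2)}_{\mu,\nu}}.
\end{equation*}

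Next, I would group the inner sum by $l:=|\nu/\tilde{\nu}|$, so that $|\mu/\tilde{\mu}|=j-l$ and $(-1)^{|\nu|-|\tilde{\nu}|}=(-1)^l$. Expanding the three $F^{(2)}$-terms through \eqref{eq:F2withF1} separates the sums over $\tilde{\mu}$ and $\tilde{\nu}$, and leaves the combinatorial identity
\begin{equation*}
	\sum_{\substack{\tilde{\mu}\le\mu \\ |\mu/\tilde{\mu}|=s}} F_{\tilde{\mu}}\,F_{\mu/\tilde{\mu}} = F_\mu,
\end{equation*}
which is immediate from counting standard Young tableaux of shape $\mu$ by the intermediate shape occurring at step $|\mu|-s$. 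Applying this to both the $\mu$- and $\nu$-sums collapses all tableau counts, leaving only binomials.

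The remaining step is a routine rearrangement: using $|\tilde{\mu}|=|\mu|-j+l$ and $|\tilde{\nu}|=|\nu|-l$, the product of binomials simplifies to
\begin{equation*}
	\binom{|\mu|+|\nu|}{j}\cdot\frac{\binom{|\mu|+|\nu|-j}{|\mu|-j+l}\binom{j}{l}}{\binom{|\mu|+|\nu|}{|\mu|}} = \binom{|\mu|}{j-l}\binom{|\nu|}{l},
\end{equation*}
as one checks by expanding all factorials. Summing over $l$ gives exactly~\eqref{eq:leadingcoeffr}. I do not foresee a real obstacle: the only non-trivial input is Proposition~\ref{prop:psi}, which pins down both the degree and the sign of the leading coefficient of $\Psi_{\mu,\nu}$; everything else is bookkeeping.
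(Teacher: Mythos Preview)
Your proposal is correct and follows essentially the same approach as the paper: both start from the formula $r_{\Phi(\mu,\nu,k),j}=\binom{|\mu|+|\nu|}{j}\sum_{(\tilde\mu,\tilde\nu)<_j(\mu,\nu)}\frac{F^{(2)}_{\tilde\mu,\tilde\nu}F^{(2)}_{(\mu,\nu)/(\tilde\mu,\tilde\nu)}}{F^{(2)}_{\mu,\nu}}\cdot\frac{\Psi_{\mu,\nu}(k)}{\Psi_{\tilde\mu,\tilde\nu}(k)}$, extract the leading $k^j$-behaviour of the $\Psi$-ratio via Proposition~\ref{prop:psi}, split the sum according to $l=|\nu/\tilde\nu|$, expand the $F^{(2)}$-factors using~\eqref{eq:F2withF1}, and collapse the tableau sums with $\sum_{\tilde\mu<_s\mu}F_{\tilde\mu}F_{\mu/\tilde\mu}=F_\mu$ before simplifying the resulting product of binomials. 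The paper makes explicit at the end that Theorem~\ref{thm:coeffpoly} is what justifies reading off the coefficient of $k^j$ from the leading asymptotic term; you might add a sentence to that effect, but otherwise the arguments coincide.
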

\begin{proof}
If we combine~\eqref{eq:coefficientsR0} with~\eqref{eq:psiR}, we obtain that
\begin{equation}\label{eq:rpsi}
	r_{\Phi(\mu,\nu,k),j}
		=
		\binom{|\mu|+|\nu|}{j} 
		\sum_{(\tilde{\mu},\tilde{\nu})<_{j}(\mu,\nu)}
		\frac{F^{(2)}_{\tilde{\mu},\tilde{\nu}} F^{(2)}_{\mu/\tilde{\mu},\nu/\tilde{\nu}}}{F^{(2)}_{\mu,\nu}} 
		\,
		\frac{\Psi_{\mu,\nu}(k)}{\Psi_{\tilde{\mu},\tilde{\nu}}(k)}.
\end{equation}
Since $\Psi_{\mu,\nu}$ is a polynomial of degree $\lvert \mu \rvert + \lvert \nu \rvert$ with leading coefficient $(-1)^{\lvert \nu \rvert} 2^{\lvert \mu \rvert +\lvert \nu \rvert}$, see Proposition~\ref{prop:psi}, we conclude that if $(\tilde{\mu},\tilde{\nu})<_{j}(\mu,\nu)$, then
\begin{equation}\label{eq:fracpsi}
\frac{\Psi_{\mu,\nu}(k)}{\Psi_{\tilde{\mu},\tilde{\nu}}(k)}=(-1)^{|\nu|-|\tilde{\nu}|} (2k)^j + O(k^{j-1}).
\end{equation}
Note that when $(\tilde{\mu},\tilde{\nu})<_{j}(\mu,\nu)$, there is an $0\leq l\leq j$ such that $\tilde{\mu} <_{j-l} \mu$ and $\tilde{\nu} <_{l} \nu$. Combining this with~\eqref{eq:fracpsi}, we expand~\eqref{eq:rpsi} as
\begin{align*}
r_{\Phi(\mu,\nu,k),j}
	&= \binom{|\mu|+|\nu|}{j} \sum_{l=0}^j \sum_{\tilde{\mu}<_{j-l} \mu} \sum_{\tilde{\nu}<_l \nu} \frac{\binom{\lvert \tilde{\mu} \rvert + \lvert \tilde{\nu} \rvert}{\lvert \tilde{\mu} \rvert} \binom{j}{l}}{\binom{\lvert \mu\rvert +\lvert \nu \rvert}{\lvert \mu \rvert}} \frac{F_{\tilde{\mu}} F_{\tilde{\nu}} F_{\mu/\tilde{\mu}} F_{\nu/\tilde{\nu}}}{F_\mu F_\nu} \left((-1)^l (2k)^j +O(k^{j-1})\right), \\
	&=\sum_{l=0}^j \left(\sum_{\tilde{\mu}<_{j-l} \mu} \frac{F_{\tilde{\mu}} F_{\mu/\tilde{\mu}}}{F_\mu}\right) \left(\sum_{\tilde{\nu}<_{l} \nu} \frac{F_{\tilde{\nu}} F_{\nu/\tilde{\nu}}}{F_\nu}\right) \binom{\lvert \mu\rvert}{j-l} \binom{\lvert \nu\rvert}{l} \left((-1)^l (2k)^j +O(k^{j-1})\right), \\
	&=\sum_{l=0}^j \binom{\lvert \mu\rvert}{j-l} \binom{\lvert \nu\rvert}{l} \left((-1)^l (2k)^j +O(k^{j-1})\right)
\end{align*}
where in the first equality we have used~\eqref{eq:F2withF1} three times, in the second equality we have rearranged factors and combined the four binomial coefficients into two, and in the last equality we observed that the sums over $\tilde{\mu}$ and $\tilde{\nu}$ are both trivially equal to one. Now using the fact that $r_{\Phi(\mu,\nu,k),j}$ is a polynomial in $k$ by Theorem~\ref{thm:coeffpoly}, we find that the coefficient of $k^j$ in $r_{\Phi(\mu,\nu,k),j}$ is indeed given by~\eqref{eq:leadingcoeffr}.
\end{proof}

We now come to the proof of the asymptotic result.

\begin{figure}[t]
	\centering
	\includegraphics[height=6cm]{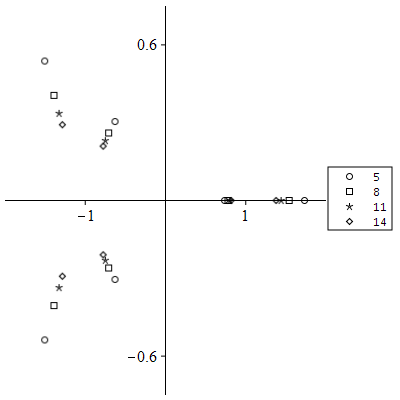}
	\qquad
	\includegraphics[height=6cm]{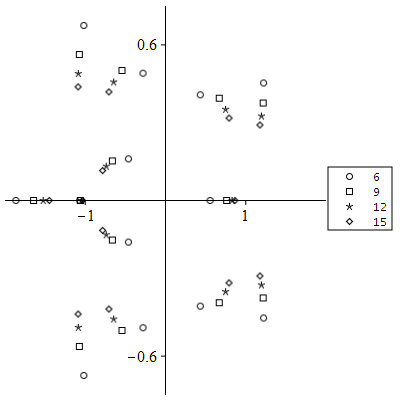}
	\caption{Zeros of the polynomials $R_{\Phi(\mu,\nu,k)}(2kx)$ with $\mu=(2^2)$, $\nu=(2)$ on the left and $\mu=(4,3,1)$, $\nu=(2^2,1)$ on the right for various core sizes. The core sizes $k$ are indicated in the legends.}
	\label{fig:AsymptoticBehaviorExample}
\end{figure}

\begin{proof}[Proof of Theorem~\ref{thm:asymptotics}]
Expanding the remainder polynomial using~\eqref{eq:Rcoeff} gives
\begin{equation}\label{eq:proofasymptotics}
	\frac{R_{\Phi(\mu,\nu,k)}(2kx)}{(2k)^{|\mu|+|\nu|}}
		= \sum_{j=0}^{|\mu|+|\nu|} \frac{r_{\Phi(\mu,\nu,k),j}}{(2k)^j} x^{|\mu|+|\nu|-j}.
\end{equation}
By Theorem~\ref{thm:coeffpoly} we know that $r_{\Phi(\mu,\nu,k),j}$ is a polynomial in $k$ of degree at most $j$, and Lemma~\ref{lem:leadingcoeffr} established that the coefficient of $k^j$ is given by~\eqref{eq:leadingcoeffr}. Plugging this into~\eqref{eq:proofasymptotics} yields 
\begin{equation*}
	\frac{R_{\Phi(\mu,\nu,k)}(2kx)}{(2k)^{|\mu|+|\nu|}}
		= \sum_{j=0}^{|\mu|+|\nu|} \left(\sum_{l=0}^j (-1)^l \binom{\lvert \mu \rvert}{j-l} \binom{\lvert \nu \rvert}{l}\right) x^{|\mu|+|\nu|-j} + O(k^{-1}).
\end{equation*}
Hence
\begin{equation*}
	\lim_{k\rightarrow \infty} \frac{R_{\Phi(\mu,\nu,k)}(2kx)}{(2k)^{|\mu|+|\nu|}}
		= \sum_{j=0}^{|\mu|+|\nu|} \left(\sum_{l=0}^j (-1)^l \binom{\lvert \mu \rvert}{j-l} \binom{\lvert \nu \rvert}{l}\right) x^{|\mu|+|\nu|-j}
\end{equation*}
uniformly for $x$ in compact subsets of the complex plane. The required results follows from noting that the right-hand side is the expansion of the polynomial $(x+1)^{\lvert \mu \rvert}(x-1)^{\lvert \nu \rvert}$.
\end{proof}

\begin{remark}
Recall that the map $\Phi$ is defined for all tuples $(\mu,\nu,k)\in \mathbb{Y}\times \mathbb{Y} \times \mathbb{Z}$ in \eqref{eq:Phi} including negative values of $k.$  This gives a way to consider `negative core lengths', which is best understood in terms of the relation $\Phi(\mu,\nu,k)=\Phi(\nu,\mu,-k-1)$. This also provides an asymptotic result for $k\rightarrow -\infty$, via Theorem~\ref{thm:asymptotics}. 
\end{remark} 

\begin{figure}[t]
	\centering
	\includegraphics[height=6cm]{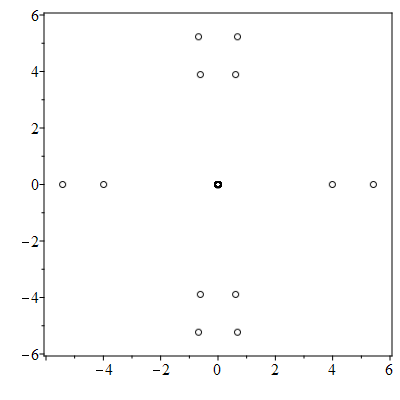}
	\qquad
	\includegraphics[height=6cm]{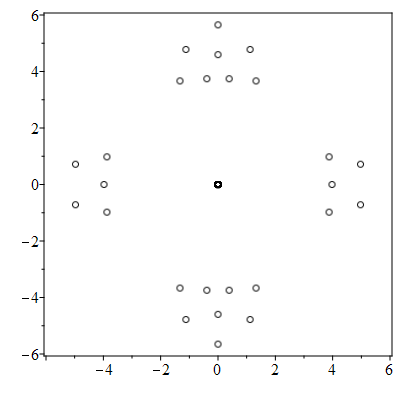}
	\caption{Zeros of the polynomials $\He_{\Phi(\mu,\nu,10)}(x)$ with $\mu=(2^2)$, $\nu=(2)$ on the left and $\mu=(4,3,1)$, $\nu=(2^2,1)$ on the right.}
	\label{fig:AsymptoticBehaviorExamplebis}
\end{figure}

\subsection{Asymptotic behaviour of the zeros}
\label{sec:asymptoticzeros}
The asymptotic behaviour~\eqref{eq:asymptotics} implies that there are $|\mu|$ zeros attracted to $-1$ and $|\nu|$ zeros attracted to 1 as the size of the core tends to infinity. In Figure~\ref{fig:AsymptoticBehaviorExample} we see the zeros of the rescaled remainder polynomial approaching $-1$ and $1$ as $k$ grows for two fixed quotients. Also for fixed finite (but large enough) $k$, we observe that the zeros exhibit a suprisingly-simple structure. Namely, if we take a quotient $(\mu,\nu)$ and fix $k\geq \ell(\mu')+\ell(\nu)-1$, then the $\lvert \mu \rvert + \lvert \nu \rvert$ zeros of $R_{\Phi(\mu,\nu,k)}$ can be grouped in `vertical strings', which is best explained by considering an example. Take $\mu=(4,3,1)$ and $\nu=(2^2,1)$ such that $\mu'=(3,2^2,1)$ and $\nu'=(3,2)$. Then, if $k\geq \ell(\mu')+\ell(\nu)-1=6$, we observe that the zeros in the left half-plane come in $\ell(\mu)=3$ vertical strings; groups that roughly have the same real part. The numbers of zeros within these strings are precisely 4, 3, and 1, i.e., the parts of $\mu$, see Figure \ref{fig:AsymptoticBehaviorExample} on the right where the zeros are plotted for several large enough values of $k$. Similarly, in the right half-plane, we see $\ell(\nu')=2$ such vertical strings. One of these strings contains 3 zeros, the other 2; these are precisely the parts of $\nu'$.

We conjecture that for a general quotient $(\mu,\nu)$ and $k\geq \ell(\mu')+\ell(\nu)-1$, the zeros of $R_{\Phi(\mu,\nu,k)}$ exhibit the same behaviour, i.e., there are $\ell(\mu)$ vertical strings of zeros that are attracted to~$-1$, with the $i^\mathrm{th}$ string containing $\mu_i$ zeros for $1 \leq i \leq \ell(\mu)$, and $\ell(\nu')$ vertical strings of zeros that are attracted to 1, with the $i^\mathrm{th}$ string  containing $\nu'_i$ zeros for $1\leq i \leq \ell(\nu')$. We observe that the ordering of the strings may not match the ordering of the parts in the partitions.

Our trade-off value $k=\ell(\mu')+\ell(\nu)-1$ is found by examining several examples and we offer the following heuristic justification. If one fixes a quotient $(\mu,\nu)$, then the  Maya diagrams $M_\mu$ and $M_\nu$ are fixed. However, if $k$ increases, then this corresponds to shifting the Maya diagram of $\nu$ to the right. For the specific value $k=\ell(\mu')+\ell(\nu)-1$, the first empty box of $M_\nu$ is precisely below the last filled box of $M_\mu$. From this point onwards, the contributions of the quotient $(\mu,\nu)$ to the partition seem to become `independent'. We do not have a rigorous proof for this justification, but the checked examples do confirm this trade-off value.

The above conjecture translates to a conjecture about the zeros of Wronskian Hermite polynomials by the relation in Theorem \ref{thm:WHdecomposition}. Figure~\ref{fig:AsymptoticBehaviorExamplebis} shows the unscaled zeros of the Wronskian Hermite polynomials corresponding to the examples in Figure \ref{fig:AsymptoticBehaviorExample} where $k=10$. By Theorem~\ref{thm:WHdecomposition}, the number of zeros doubles with respect to Figure \ref{fig:AsymptoticBehaviorExample}, but the behaviour is similar. The zeros near the positive imaginary axes form $\ell(\mu)$ `horizontal' strings with each string having $\mu_i$ approximately-equal imaginary parts, while the zeros near the positive real axis form vertical strings with $\nu'_i$ parts. We intend to explore this curious `physical' appearance of the Young diagrams of the quotient partitions in such zero plots in future work.

We also note that a different qualitative relationship between the Young diagram of specific even partitions and the zero distributions of $\He_{\lambda}$ was noted in~\cite[Section~3]{Felder_Hemery_Veselov}. The authors considered Wronskian Hermite polynomials associated to even partitions of the form $\lambda=((2\lambda_1)^2, (2\lambda_2)^2, \dots, (2\lambda_r)^2)$. However, such partitions have empty core $(k=0)$, so our asymptotic result and large-$k$ observations do not add further insight.

The question of the location of the zeros of the Wronskian Hermite polynomials is both interesting in its own right, as well as for applications, and dates back to at least the 1960s~\cite{Karlin_Szego}. More recently, the asymptotic behaviour of the zeros of the exceptional Hermite polynomials has been studied in~\cite{Kuijlaars_Milson}. A further motivation for studying the locations of the zeros  is that the rational solutions of the fourth Painlev\'e equation are written as ratios of certain Wronskian Hermite polynomials \cite{Airault,Bassom_Clarkson_Hicks,Murata,Noumi_Yamada,Okamoto}. The zeros in the generalized Okamoto and generalized Hermite cases, which feature in the Painlev\'e IV rational solutions, were shown by Clarkson~\cite{Clarkson-zeros,Clarkson-PIV} to form highly-regular patterns in the complex plane. Some rigorous results on the distribution of the zeros of the generalized Hermite polynomials and the generalized Okamoto polynomials in various asymptotic regimes have appeared recently~\cite{Buckingham,Masoero_Roffelsen,Masoero_Roffelsen-2019,Novokshenov_Shchelkonogov}, complementing the asymptotic results for small-length Wronskians obtained in~\cite{Felder_Hemery_Veselov,GarciaFerrero_GomezUllate}.  

Felder et al.~\cite{Felder_Hemery_Veselov} conjectured the Wronskian Hermite polynomials have no real zeros if and only if the associated partition is even, and made a similar statement about the number of purely imaginary zeros. The result about the real zeros was (already) proven in a more general context independently by Krein~\cite{Krein} and Adler~\cite{Adler}, and recently generalized in~\cite{GarciaFerrero_GomezUllate}. In addition, from the latter paper it follows that if we assume that the Hermite setting is semi-degenerate, specifically that common zeros between certain Wronskian polynomials only occur at the origin, then the number of real zeros can be explicitly stated in terms of the associated partition.

%%%% Local Variables: 
%%%% mode: latex
%%%% TeX-master: "../main"
%%%% End: 

\section{Connection with Laguerre polynomials}\label{sec:connectionwithlaguerre}
In this section we explain how Wronskian Hermite polynomials can be seen as discrete versions of Wronskians involving Laguerre polynomials, generalizing the well-known relation between Hermite and Laguerre polynomials. 
For all $n\geq0$, we note from~\eqref{eq:recurrenceHermite} that 
\begin{equation*}
	\He_n(x) 
		:= 2 ^{-\frac{n}{2}} H_n\left(\frac{x}{\sqrt{2}}\right)
\end{equation*}
where $H_n$ denotes the classical definition for Hermite polynomials, as, for example, given in~\cite{Szego}. Likewise, we use modified Laguerre polynomials $\hat{L}_n^{(\alpha)}$, which we define as 
\begin{equation}\label{eq:monicLaguerre}
	\hat{L}_n^{(\alpha)}(x)
		:= (-1)^n \, n! \, L_n^{(\alpha)}(x)
\end{equation}
for all $n\geq 0$ and where $L_n^{(\alpha)}$ denotes the classical definition of the Laguerre polynomial~\cite{Szego}. In this way, both $\He_n$ and $\hat{L}_n^{(\alpha)}$ are monic polynomials of degree $n$, and they are related according to
\begin{equation}\label{eq:HermiteLaguerre}
\He_{2n}(x)
= 2^n \, \hat{L}_n^{(-\frac{1}{2})}\left(\frac{x^2}{2}\right)
\qquad
\qquad
\He_{2n+1}(x)
= 2^n \, x \, \hat{L}_n^{(\frac{1}{2})}\left(\frac{x^2}{2}\right)	
\end{equation}
for all $n\geq0$; see~\cite[Formula~(5.6.1)]{Szego} for the identities in terms of the classical definitions.

We now introduce Wronskians involving Laguerre polynomials following~\cite{Bonneux_Kuijlaars,Duran-Laguerre,Duran_Perez,GomezUllate_Grandati_Milson-L+J}. For any two partitions $\mu$ and $\nu$ with degree vectors $n_{\mu}$ and $m_{\nu}$, and for any parameter $\alpha\in\mathbb{R}$ such that the values 
\begin{equation}\label{eq:parameter}
	n_1,n_2,\dots,n_{\ell(\mu)},m_1-\alpha,m_2-\alpha,\dots,m_{\ell(\nu)}-\alpha
\end{equation}
are pairwise different, we define the polynomial
\begin{equation}\label{eq:WL}
	\hat{L}_{\mu,\nu}^{(\alpha)}
		:=  \frac{1}{\Delta(n_{\mu},m_\nu-\alpha)} 
		x^{(\ell(\mu)+\alpha)\ell(\nu)} \Wr[f_1,f_2,\dots,f_{\ell(\mu)},g_1,g_2,\dots,g_{\ell(\nu)}]
\end{equation}
where
\begin{align*}
f_j(x)
&= \hat{L}_{n_j}^{(\alpha)}(x),
&& j=1,2,\dots,\ell(\mu), 
\\
g_{j}(x)
&= x^{-\alpha}\hat{L}_{m_j}^{(-\alpha)}(x),
&& j=1,2,\dots,\ell(\nu).
\end{align*}
Here $\Delta(n_{\mu},m_\nu-\alpha)$ denotes the Vandermonde determinant of the elements given in~\eqref{eq:parameter}. We have that $\hat{L}_{\mu,\nu}^{(\alpha)}$ is a monic polynomial of degree $|\mu|+|\nu|$ by~\cite[Proposition~3.1]{Bonneux}. The polynomial  $\hat{L}_{\mu,\nu}^{(\alpha)}$ may be defined at each of the points disallowed by \eqref{eq:parameter} by taking a relevant limit as a function of~$\alpha$.

The relations in~\eqref{eq:HermiteLaguerre}  extend to Wronskian polynomials, and it is now natural to express such identities in terms of cores and quotients. In particular, if $\lambda=(n)$ then using its core and quotient as stated in Lemma~\ref{lem:CoreAndQuotientTrivialPartition}, we find~\eqref{eq:WHermiteWLaguerre}  reduces to~\eqref{eq:HermiteLaguerre}.

\begin{proposition}\label{prop:WHermiteWLaguerre}
Let $\lambda$ be a partition with core $(k,k-1,\dots,2,1)$ and quotient $(\mu,\nu)$. Then 
\begin{equation}\label{eq:RLaguerre}
	R_{\lambda}(x)	
		= 2^{|\mu|+|\nu|} \, \hat{L}_{\mu,\nu}^{(\alpha_k)}\left(\frac{x}{2}\right)
\end{equation}
with $\alpha_k:=-1/2-\ell(\mu)+\ell(\nu)-k.$ In other words, we have
\begin{equation}\label{eq:WHermiteWLaguerre}
	\He_{\lambda}(x)
		= 2^{|\mu|+|\nu|}  x^{\frac{k(k+1)}{2}} \hat{L}_{\mu,\nu}^{(\alpha_k)}\left(\frac{x^2}{2}\right).
\end{equation}
\end{proposition}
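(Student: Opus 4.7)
The plan is to substitute the Hermite--Laguerre relations \eqref{eq:HermiteLaguerre} into the defining Wronskian of $\He_\lambda$, perform the change of variable $t=x^2/2$, and identify the result with the Laguerre Wronskian defining $\hat L^{(\alpha_k)}_{\mu,\nu}$.

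I would first use the $2$-modular decomposition of the Maya diagram (Section~\ref{sec:coresandquotients}) to split the degree vector $n_\lambda$ by parity. Assuming $s':=k+\ell(\mu)-\ell(\nu)\geq 0$ (the opposite case follows from the involution \eqref{eq:kand-1-k} swapping $\mu$ and $\nu$), the shifts $s=0$ and $s'$ work, giving even entries of $n_\lambda$ as $\{2n_j^{(\mu)}:1\leq j\leq\ell(\mu)\}$ and odd entries as $\{2(n_j^{(\nu)}+s')+1:1\leq j\leq\ell(\nu)\}\cup\{1,3,\dots,2s'-1\}$. After permuting the columns of $\Wr_x[\He_{n_1},\dots,\He_{n_\ell}]$ to place the even-indexed Hermite polynomials first (absorbing a sign $\epsilon$), I apply \eqref{eq:HermiteLaguerre} column by column and factor the numerical constants $2^{n_j^{(\mu)}}$ and $2^{b+1/2}$ out of each column. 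The Wronskian change-of-variable formula $\Wr_x[G(t(x))]=x^{\binom{\ell}{2}}\Wr_t[G(t)]$ then converts the derivatives from $x$ to $t$.

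At this stage the problem reduces to proving a Laguerre Wronskian identity relating
\[
\Wr_t\!\bigl[\hat L^{(-1/2)}_{n_j^{(\mu)}},\ t^{1/2}\hat L^{(1/2)}_{c}\bigr]
\quad\text{to}\quad
\Wr_t\!\bigl[\hat L^{(\alpha_k)}_{n_j^{(\mu)}},\ t^{-\alpha_k}\hat L^{(-\alpha_k)}_{n_j^{(\nu)}}\bigr],
\]
where $c$ ranges over $\{n_j^{(\nu)}+s':1\leq j\leq\ell(\nu)\}\cup\{0,1,\dots,s'-1\}$ and $\alpha_k=-1/2-s'$. This identity, where the extra ``shift'' enters, would be established by combining two consequences of the Laguerre Rodrigues formula---the parameter-shift identity $\frac{d^{s'}}{dt^{s'}}[t^{\alpha+s'}\hat L_n^{(\alpha+s')}(t)]=\prod_{j=0}^{s'-1}(n+\alpha+s'-j)\cdot t^\alpha\hat L_n^{(\alpha)}(t)$ and the degree-shift identity $\frac{d^{s'}}{dt^{s'}}\hat L_n^{(\alpha)}(t)=\frac{n!}{(n-s')!}\hat L_{n-s'}^{(\alpha+s')}(t)$---together with the block-diagonal determinant identity $\Wr_t[p_0,\dots,p_{s'-1},f_1,\dots,f_r]=\prod_{i=0}^{s'-1}i!\cdot\Wr_t[f_1^{(s')},\dots,f_r^{(s')}]$ valid for monic polynomials $p_i$ of degree $i$. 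The $s'$ extra ``small'' columns $t^{1/2}\hat L_c^{(1/2)}$ with $c<s'$ play the role of the $p_i$'s, simultaneously collapsing out of the Wronskian and shifting the remaining Laguerre parameters by $s'$ in absolute value.

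Finally all numerical prefactors must be reconciled using $|\lambda|=k(k+1)/2+2(|\mu|+|\nu|)$ and $\ell=2\ell(\mu)+k$: the total power of $2$ collapses to $2^{|\mu|+|\nu|}$, the factor $x^{\binom{\ell}{2}}$ contributes the required $x^{k(k+1)/2}$ together with the $t^{(\ell(\mu)+\alpha_k)\ell(\nu)}$ appearing in the Laguerre definition, and the Vandermonde ratio $\Delta(n_\lambda)/\Delta(n_\mu,m_\nu-\alpha_k)$ combines with the Pochhammer products from the Darboux identities to yield the correct overall constant. The main obstacle is expected to be this final bookkeeping step, particularly showing that the Vandermonde quotient matches the Pochhammer products produced by the parameter-shift identity; this should reduce to a straightforward telescoping once all terms are written in Maya-diagram coordinates.
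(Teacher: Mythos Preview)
Your approach is essentially the paper's proof run in the opposite direction. The paper starts from the Laguerre side: it invokes Theorem~1 of \cite{Bonneux_Kuijlaars} to rewrite $\hat L^{(\alpha_k)}_{\mu,\nu}$ as a larger Wronskian (with extra low-degree entries corresponding to the Maya shifts $t_1,t_2\ge 0$), all of whose entries carry parameter $\pm\tfrac12$; then applies \eqref{eq:HermiteLaguerre} entry-wise and the change of variable \eqref{eq:WronskianProperty1} to recognise $\He_\lambda$. You instead start from $\He_\lambda$, split by parity via the $2$-modular decomposition, convert to Laguerre, and then must \emph{prove} the same Maya-shift identity from scratch. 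What the paper buys is brevity: the hard step is outsourced to a single citation. What your approach buys is self-containment, and it also makes the mechanism (Rodrigues/parameter-shift plus collapsing trivial columns) visible rather than hidden in a reference. Note too that the paper keeps both shifts $t_1,t_2\geq 0$, so no case distinction is needed; your choice $s=0$ forces the involution argument when $s'<0$.

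There is one genuine gap in your sketch of the intermediate Laguerre identity. The block-triangular Wronskian identity you quote requires the $p_i$ to be polynomials, so that their higher derivatives vanish; but your ``small'' columns $t^{1/2}\hat L_c^{(1/2)}$ are not polynomials (no derivative of $t^{c+1/2}$ vanishes), and the block structure does not appear. The fix is to first apply the product rule $\Wr[\phi g_1,\dots,\phi g_n]=\phi^{n}\Wr[g_1,\dots,g_n]$ with $\phi=t^{-1/2}$ to all columns: the small columns become the genuine polynomials $\hat L_c^{(1/2)}$ of degree $c<s'$, the block-triangular collapse now applies, and the $s'$-fold differentiation it produces acts on the remaining columns exactly via your two Rodrigues identities (sending $t^{-1/2}\hat L_n^{(-1/2)}$ to a multiple of $t^{\alpha_k}\hat L_n^{(\alpha_k)}$ and $\hat L_{n+s'}^{(1/2)}$ to a multiple of $\hat L_n^{(-\alpha_k)}$). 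A final application of the product rule with $\phi=t^{\alpha_k}$ then yields the Wronskian defining $\hat L^{(\alpha_k)}_{\mu,\nu}$. With this correction your argument goes through; it is, in effect, a reconstruction of the proof of the theorem the paper cites.
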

\begin{proof}
It is sufficient to prove~\eqref{eq:WHermiteWLaguerre}, since~\eqref{eq:RLaguerre} then immediately follows from~\eqref{eq:WHdecomposition}. 

From the definition of $(\mu,\nu)$ as the quotient of $\lambda$, there exist $t_1\geq 0$ and $t_2\geq 0$ such that the Maya diagram $M_\lambda$ is equivalent to the 2-modular decomposition of the Maya diagrams $\widetilde{M}_\mu$ and $\widetilde{M}_\nu$, where $\widetilde{M}_\mu=M_\mu + t_1$ and $\widetilde{M}_\nu=M_\nu + t_2$ are equivalent to $M_\mu$ and $M_\nu$, respectively. From~\eqref{eq:shift} we have 
\begin{equation*}
	\ell(\nu)+t_2-\ell(\mu)-t_1 = k.
\end{equation*}
This implies that $\alpha_k=-1/2+t_1-t_2$. By Theorem 1 in~\cite{Bonneux_Kuijlaars}, we then have that (up to the normalization constant) $\hat{L}_{\mu,\nu}^{(\alpha_k)}(x)$ is equal to
\begin{equation}\label{eq:WLshiftedMaya}
	x^{(\ell(\mu)+t_1-\frac{1}{2})(\ell(\nu)+t_2)} \Wr[\tilde{f}_1,\tilde{f}_2,\dots,\tilde{f}_{\ell(\mu)+t_1},\tilde{g}_1,\tilde{g}_2,\dots,\tilde{g}_{\ell(\nu)+t_2}]
\end{equation}
where
\begin{align*}
\tilde{f}_j(x)
&= \hat{L}_{n_j+t_1}^{(-\frac{1}{2})}(x),
&& j=1,2,\dots,\ell(\mu), 
\\
\tilde{f}_j(x)
&= \hat{L}_{\ell(\mu)+t_1-j}^{(-\frac{1}{2})}(x),
&& j=\ell(\mu)+1,\ell(\mu)+2,\dots,\ell(\mu)+t_1,
\\
\tilde{g}_{j}(x)
&= x^{\frac{1}{2}}\hat{L}_{m_j+t_2}^{(\frac{1}{2})}(x),
&& j=1,2,\dots,\ell(\nu), 
\\
\tilde{g}_{j}(x)
&= x^{\frac{1}{2}}\hat{L}_{\ell(\nu)+t_2-j}^{(\frac{1}{2})}(x),
&& j=\ell(\nu)+1,\ell(\nu)+2,\dots,\ell(\nu)+t_2.
\end{align*}
Note that the degrees that appear for the functions $\tilde{f}_j$ and $\tilde{g}_j$ are precisely the non-negative locations of the dots in the Maya diagrams $\widetilde{M}_\mu$ and $\widetilde{M}_\nu$, respectively. In this way, Theorem 1 in~\cite{Bonneux_Kuijlaars} tells us precisely how to account for shifting the origin of Maya diagrams when working with Wronskians involving Laguerre polynomials. If we evaluate the functions $\tilde{f}_j$ and $\tilde{g}_j$ at $x^2/2$ and use~\eqref{eq:HermiteLaguerre}, we obtain 
\begin{align*}
\tilde{f}_j\left(\frac{x^2}{2}\right)
&= 2^{-n_j-t_1} \He_{2(n_j+t_1)}(x),
&& j=1,2,\dots,\ell(\mu), 
\\
\tilde{f}_j\left(\frac{x^2}{2}\right)
&= 2^{-\ell(\mu)-t_1+j} \He_{2(\ell(\mu)+t_1-j)}(x),
&& j=\ell(\mu)+1,\ell(\mu)+2,\dots,\ell(\mu)+t_1,
\\
\tilde{g}_j\left(\frac{x^2}{2}\right)
&= 2^{-m_j-t_2-\frac{1}{2}} \He_{2(m_j+t_2)+1}(x),
&& j=1,2,\dots,\ell(\nu), 
\\
\tilde{g}_j\left(\frac{x^2}{2}\right)
&= 2^{-\ell(\nu)-t_2+j-\frac{1}{2}} \He_{2(\ell(\nu)+t_2-j)+1}(x),
&& j=\ell(\nu)+1,\ell(\nu)+2,\dots,\ell(\nu)+t_2.
\end{align*}
This evaluation turns~\eqref{eq:WLshiftedMaya} evaluated at $x^2/2$ into a Wronskian of Hermite polynomials evaluated in~$x$. In fact, the degrees that appear are precisely all the dots at non-negative locations of a Maya diagram~$\widetilde{M}_\lambda$, which is equivalent to the canonical Maya diagram~$M_\lambda$. Moreover, it is well-known that shifting the origin of a Maya diagram does not change the corresponding Wronskian Hermite polynomial~\cite{Bonneux_Stevens,GomezUllate_Grandati_Milson-durfee}. Then, using the standard Wronskian identity
\begin{equation}\label{eq:WronskianProperty1}
	\Wr[y_1,y_2,\dots,y_r](h(x))
		= \left(h'(x)\right)^{-\frac{r(r-1)}{2}} \Wr[y_1\circ h,y_2\circ h,\dots,y_r\circ h](x)
\end{equation}
for $h(x)=x^2/2$, and  keeping careful track of the factors of $x$ and $2$, we obtain~\eqref{eq:WHermiteWLaguerre}.
\end{proof}

The Wronskian involving Laguerre polynomials~\eqref{eq:WL} appears in the setting of exceptional Laguerre polynomials~\cite{Bonneux_Kuijlaars,Duran-Laguerre,Duran_Perez,GomezUllate_Grandati_Milson-L+J}. In both papers, polynomials $\Omega_{\mu,\nu}^{(\alpha)}$ of degree $|\mu|+|\nu|$ were introduced. Likewise, we define their monic variant as
\begin{equation}\label{eq:omega}
	\hat{\Omega}_{\mu,\nu}^{(\alpha)}
		:=  \frac{(-1)^{\sum_j m_j}}{\Delta(n_{\mu}) \Delta(m_\nu)} e^{-\ell(\nu) x}
		\Wr[f_1,f_2,\dots,f_{\ell(\mu)},h_1,h_2,\dots,h_{\ell(\nu)}]
\end{equation}
for any parameter $\alpha$ with 
\begin{align*}
	f_j(x)
		&= \hat{L}_{n_j}^{(\alpha)}(x),
		&& j=1,2,\dots,\ell(\mu), 
	\\
	h_{j}(x)
		&= e^{x}\hat{L}_{m_j}^{(\alpha)}(-x),
		&& j=1,2,\dots,\ell(\nu).
\end{align*}
Proposition 1 in~\cite{Bonneux_Kuijlaars} states that $\hat{\Omega}_{\mu,\nu}^{(\alpha)}$ is a monic polynomial of degree $|\mu|+|\nu|$ and applying Theorem 1 in~\cite{Bonneux_Kuijlaars} yields the identity
\begin{equation}\label{eq:OmegaWL}
	\hat{L}_{\mu,\nu}^{(\alpha)}(x)
		= \hat{\Omega}_{\mu,\nu'}^{(\alpha-\ell(\nu)-\ell(\nu'))}(x)
\end{equation}
for any partitions $\mu$ and $\nu$ such that the elements in~\eqref{eq:parameter} are pairwise different. More identities between Wronskians involving Laguerre polynomials may be found in~\cite{GomezUllate_Grandati_Milson-L+J}.

Translating the identity in Proposition~\ref{prop:WHermiteWLaguerre} using~\eqref{eq:OmegaWL} gives the following result.

\begin{corollary}\label{cor:WHermiteOmega}
Let $\lambda$ be a partition with core $(k,k-1,\dots,2,1)$ and quotient $(\mu,\nu)$. Then
\begin{equation}\label{eq:ROmega}
	R_{\lambda}(x)	
		= 2^{|\mu|+|\nu|} \, \hat{\Omega}_{\mu,\nu'}^{(\alpha_k)}\left(\frac{x}{2}\right)
\end{equation}
where $\alpha_k:=-1/2-\ell(\mu)-\ell(\nu')-k$ and $\nu'$ denotes the conjugate partition to $\nu$. Similarly,
\begin{equation}\label{eq:WHermiteOmega}
	\He_{\lambda}(x)
		= 2^{|\mu|+|\nu|}  x^{\frac{k(k+1)}{2}} \hat{\Omega}_{\mu,\nu'}^{(\alpha_k)}\left(\frac{x^2}{2}\right).
\end{equation}
\end{corollary}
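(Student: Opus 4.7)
The plan is to obtain~\eqref{eq:ROmega} as a direct algebraic consequence of Proposition~\ref{prop:WHermiteWLaguerre} combined with the conversion identity~\eqref{eq:OmegaWL}, and then to deduce~\eqref{eq:WHermiteOmega} from~\eqref{eq:ROmega} via the factorization in Theorem~\ref{thm:WHdecomposition}. Concretely, I would first rename the parameter appearing in Proposition~\ref{prop:WHermiteWLaguerre} by writing $\beta_k := -1/2 - \ell(\mu) + \ell(\nu) - k$, to avoid collision with the symbol $\alpha_k$ used in the statement of the corollary. That proposition then gives
\begin{equation*}
	R_{\lambda}(x) \;=\; 2^{|\mu|+|\nu|}\,\hat{L}_{\mu,\nu}^{(\beta_k)}\!\left(\tfrac{x}{2}\right).
\end{equation*}

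Next, I would apply~\eqref{eq:OmegaWL} at parameter $\beta_k$ to obtain $\hat{L}_{\mu,\nu}^{(\beta_k)}(y) = \hat{\Omega}_{\mu,\nu'}^{(\beta_k-\ell(\nu)-\ell(\nu'))}(y)$, and then verify the parameter shift: a direct computation yields
\begin{equation*}
	\beta_k - \ell(\nu) - \ell(\nu') \;=\; -\tfrac{1}{2} - \ell(\mu) - \ell(\nu') - k \;=\; \alpha_k,
\end{equation*}
which is exactly the parameter appearing in the corollary. Substituting this back into the expression for $R_\lambda$ establishes~\eqref{eq:ROmega}. For~\eqref{eq:WHermiteOmega}, I would simply invoke the factorization $\He_{\lambda}(x) = x^{k(k+1)/2} R_\lambda(x^2)$ from Theorem~\ref{thm:WHdecomposition}, replace $x$ by $x^2$ in~\eqref{eq:ROmega} and multiply through by $x^{k(k+1)/2}$; the two identities are visibly equivalent and so no separate argument is needed.

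The proof is thus essentially a single substitution and there is no genuinely hard step. The only real obstacle is bookkeeping: one must be careful not to conflate the two different uses of the symbol $\alpha_k$ in Proposition~\ref{prop:WHermiteWLaguerre} and Corollary~\ref{cor:WHermiteOmega}, since they differ by $\ell(\nu)+\ell(\nu')$, which is precisely the shift supplied by~\eqref{eq:OmegaWL}. A minor technical point worth noting is that~\eqref{eq:OmegaWL} is stated under the genericity hypothesis that the quantities in~\eqref{eq:parameter} are pairwise distinct; if a particular $k$ violates this condition, one appeals to the limiting convention mentioned after~\eqref{eq:WL}, which makes both sides continuous functions of $\alpha$ and so allows the identity to be extended unambiguously to all $k$.
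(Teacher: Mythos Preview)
Your proposal is correct and matches the paper's approach exactly: the paper simply states that the corollary follows by ``translating the identity in Proposition~\ref{prop:WHermiteWLaguerre} using~\eqref{eq:OmegaWL}'', which is precisely the substitution you carry out, including the parameter shift $\beta_k-\ell(\nu)-\ell(\nu')=\alpha_k$. Your added remarks on disambiguating the two uses of $\alpha_k$ and on the genericity hypothesis for~\eqref{eq:OmegaWL} are accurate and go slightly beyond what the paper spells out.
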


The Wronskian polynomials~\eqref{eq:WL} and~\eqref{eq:omega} involving Laguerre polynomials were defined above for almost-all values of $\alpha$ and may be obtained for all $\alpha$ by analytical continuation. Hence Proposition~\ref{prop:WHermiteWLaguerre} and Corollary~\ref{cor:WHermiteOmega} connect the discrete parameter $k$ in the Hermite setting to the continuous parameter $\alpha$ in the Laguerre setting.

The following limits involving Laguerre polynomials are well-known: 
\begin{equation}\label{eq:Laguerrelimit}
	\lim_{\alpha\to\pm\infty} \frac{L_n^{(\alpha)}(\alpha x)}{L^{(\alpha)}_n(0)}
		= (1-x)^n
	\quad
	\text{or equivalently}
	\quad
	\lim_{\alpha\to\pm\infty} \frac{\hat{L}_n^{(\alpha)}(\alpha x)}{\alpha^n}
	= (x-1)^n.
\end{equation}
The first limit  can be derived from (5.1.6) in~\cite{Szego}, and the equivalent statement follows from~\eqref{eq:monicLaguerre} and  $L^{(\alpha)}_n(0)=(\alpha+1)(\alpha+2)\cdots(\alpha+n)/n!$. The asymptotic behaviour of the Laguerre polynomials can be generalized to the following asymptotic behaviour of the Wronskian Laguerre polynomials. 

\begin{proposition}\label{prop:asymptoticsWL}
For any pair of partitions $\mu$ and $\nu$ we have
\begin{align}
	&\lim_{\alpha \to \pm\infty} \frac{\hat{L}_{\mu,\nu}^{(\alpha)}(\alpha x)}{\alpha^{|\mu|+|\nu|}}
		= (x-1)^{|\mu|} \, (x+1)^{|\nu|},
	\label{eq:asymptoticsWL} \\
	&\lim_{\alpha \to \pm\infty} \frac{\hat{\Omega}_{\mu,\nu}^{(\alpha)}(\alpha x)}{\alpha^{|\mu|+|\nu|}}
		= (x-1)^{|\mu|} \, (x+1)^{|\nu|}.
	\label{eq:asymptoticsOmega}
\end{align}
\end{proposition}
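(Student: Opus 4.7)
The plan is to prove \eqref{eq:asymptoticsWL} first and then deduce \eqref{eq:asymptoticsOmega} via \eqref{eq:OmegaWL}. The core idea for \eqref{eq:asymptoticsWL} is to combine Theorem~\ref{thm:asymptotics} with Proposition~\ref{prop:WHermiteWLaguerre} along the discrete sequence $\alpha_k := -\tfrac{1}{2}-\ell(\mu)+\ell(\nu)-k$ (so $\alpha_k\to-\infty$ as $k\to\infty$), and then to promote this subsequence asymptotic to the full continuous limit $\alpha\to\pm\infty$ by a rational-function argument.

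The preparatory step is to check that every coefficient $c_j(\alpha)$ of $x^{|\mu|+|\nu|-j}$ in $\hat{L}_{\mu,\nu}^{(\alpha)}(x)$ is a rational (in fact polynomial) function of $\alpha$. The cleanest route is through the identity $\hat{L}_{\mu,\nu}^{(\alpha)}(x) = \hat{\Omega}_{\mu,\nu'}^{(\alpha-\ell(\nu)-\ell(\nu'))}(x)$ from \eqref{eq:OmegaWL}: inspecting the defining formula \eqref{eq:omega}, the $\ell(\nu)$ exponential factors of the $h_j$'s pull out of the Wronskian as $e^{\ell(\nu)x}$ and cancel the prefactor $e^{-\ell(\nu)x}$, the Laguerre polynomials involved have coefficients polynomial in $\alpha$, and the Vandermonde denominator does not depend on $\alpha$. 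Hence $\hat{\Omega}_{\mu,\nu}^{(\alpha)}(x)$, and therefore also $\hat{L}_{\mu,\nu}^{(\alpha)}(x)$, have coefficients polynomial in $\alpha$. Next, using Proposition~\ref{prop:WHermiteWLaguerre} to rewrite $R_{\Phi(\mu,\nu,k)}(2ky) = 2^{|\mu|+|\nu|}\hat{L}_{\mu,\nu}^{(\alpha_k)}(ky)$ and substituting $y=y_k:=\alpha_k x/k \to -x$ into Theorem~\ref{thm:asymptotics} (using uniform convergence on compact sets together with $(k/\alpha_k)^{|\mu|+|\nu|}\to (-1)^{|\mu|+|\nu|}$) yields
\begin{equation*}
\lim_{k\to\infty}\frac{\hat{L}_{\mu,\nu}^{(\alpha_k)}(\alpha_k x)}{\alpha_k^{|\mu|+|\nu|}} = (-1)^{|\mu|+|\nu|}(-x+1)^{|\mu|}(-x-1)^{|\nu|} = (x-1)^{|\mu|}(x+1)^{|\nu|}.
\end{equation*}

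The coefficient of $x^{|\mu|+|\nu|-j}$ in $\hat{L}_{\mu,\nu}^{(\alpha)}(\alpha x)/\alpha^{|\mu|+|\nu|}$ is the rational function $c_j(\alpha)/\alpha^j$; since it has a finite limit along $\alpha_k \to -\infty$, standard properties of rational functions guarantee that $\lim_{\alpha\to-\infty} c_j(\alpha)/\alpha^j$ exists and equals that subsequence value, and moreover $\lim_{\alpha\to+\infty} c_j(\alpha)/\alpha^j$ must equal the same value (the two infinite limits of a rational function at $\pm\infty$ coincide whenever either is finite). Matching coefficients therefore gives \eqref{eq:asymptoticsWL} for both signs of $\alpha$. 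Finally, rewriting \eqref{eq:OmegaWL} as $\hat{\Omega}_{\mu,\nu}^{(\alpha)}(x) = \hat{L}_{\mu,\nu'}^{(\beta)}(x)$ with $\beta=\alpha+\ell(\nu)+\ell(\nu')$ and noting that $\beta/\alpha \to 1$, $\alpha x/\beta \to x$ and $|\nu'|=|\nu|$, applying \eqref{eq:asymptoticsWL} to the pair $(\mu,\nu')$ at parameter $\beta$ and argument $\alpha x/\beta$ delivers \eqref{eq:asymptoticsOmega}. The main delicacy in this program is precisely the rationality (in fact polynomiality) of the coefficients $c_j$ as functions of $\alpha$; this, combined with the uniform convergence in Theorem~\ref{thm:asymptotics}, is what bridges the discrete asymptotic along $\{\alpha_k\}$ to a genuine continuous limit.
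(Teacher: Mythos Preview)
Your argument is correct and takes a genuinely different route from the paper. The paper explicitly notes that combining Theorem~\ref{thm:asymptotics} with the Hermite--Laguerre identities \eqref{eq:RLaguerre}, \eqref{eq:ROmega} gives only a \emph{heuristic} argument, the worry being that the connection only pins down the behaviour of $\hat{L}_{\mu,\nu}^{(\alpha)}$ at the discrete parameter values $\alpha_k$. You resolve exactly this worry: by observing (via \eqref{eq:omega} and \eqref{eq:OmegaWL}) that each coefficient of $\hat{L}_{\mu,\nu}^{(\alpha)}$ is polynomial in $\alpha$, the quantity $c_j(\alpha)/\alpha^j$ is a rational function, and a rational function that converges along an unbounded sequence must converge to the same limit at $\pm\infty$. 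This upgrades the subsequence limit to the full continuous limit with almost no extra work.

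By contrast, the paper opts for a self-contained proof that avoids the Hermite connection entirely: it writes $\hat{\Omega}_{\mu,\nu}^{(\alpha)}(\alpha x)/\alpha^{|\mu|+|\nu|}$ as a constant times a block determinant, studies the entrywise limits using the classical Laguerre limit~\eqref{eq:Laguerrelimit} and the derivative identity of Lemma~\ref{lem:LaguerreProperty}, performs row operations to tame the divergent block, and reads off the product $(x-1)^{|\mu|}(x+1)^{|\nu|}$ from the resulting block-triangular structure. The paper then deduces \eqref{eq:asymptoticsWL} from \eqref{eq:asymptoticsOmega} via \eqref{eq:OmegaWL}, the reverse of your order. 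Your approach is shorter and reuses Theorem~\ref{thm:asymptotics}, at the cost of depending on that earlier machinery; the paper's approach is longer but stands on its own, independent of Sections~\ref{sec:2coresandWHP}--\ref{sec:asymptotics}.
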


An heuristic argument for this result can be obtained by combining the asymptotic result~\eqref{eq:asymptotics} for the remainder polynomial and the identities~\eqref{eq:RLaguerre} and~\eqref{eq:ROmega}. However, one should be careful in how the continuous parameter $\alpha$ tends to infinity since only discrete values for~$k$ are permitted. We therefore give a proof of Proposition~\ref{prop:asymptoticsWL} that holds for all values of $\alpha$, without using the connection to Wronskian Hermite polynomials. It is based on the classical result~\eqref{eq:Laguerrelimit} and a careful analysis of determinants. For this, we need the following elementary result for Laguerre polynomials.

\begin{lemma}\label{lem:LaguerreProperty}
Take an integer $m\geq0$ and fix the parameter $\alpha$. Then, for any integer $i\geq1$ we have
\begin{equation*}
\frac{d^{i-1}}{dx^{i-1}} \hat{L}_m^{(\alpha)}(x)= \sum_{l=0}^{i-1} (-1)^l \binom{i-1}{l} \hat{L}_m^{(\alpha+l)}(x).
\end{equation*}
\end{lemma}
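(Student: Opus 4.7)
The plan is to reduce the statement to a single-derivative identity and then iterate it by induction, with the binomial coefficients appearing via Pascal's rule.

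First I would establish the first-order identity
\begin{equation*}
\frac{d}{dx}\hat{L}_m^{(\alpha)}(x)=\hat{L}_m^{(\alpha)}(x)-\hat{L}_m^{(\alpha+1)}(x).
\end{equation*}
To see this, I would start from two standard Laguerre identities (see, e.g., Szeg\H{o}): the differentiation rule $\frac{d}{dx}L_m^{(\alpha)}(x)=-L_{m-1}^{(\alpha+1)}(x)$ and the contiguous relation $L_{m-1}^{(\alpha+1)}(x)=L_m^{(\alpha+1)}(x)-L_m^{(\alpha)}(x)$. Combining them gives $\frac{d}{dx}L_m^{(\alpha)}(x)=L_m^{(\alpha)}(x)-L_m^{(\alpha+1)}(x)$, and multiplying by $(-1)^m m!$ (cf.~\eqref{eq:monicLaguerre}) yields the displayed first-order identity. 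The key point is that this differentiation rule preserves the degree $m$ (it only shifts $\alpha$), which is exactly why the statement of the lemma only involves $\hat{L}_m^{(\alpha+l)}$ with fixed $m$.

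Next I would proceed by induction on $i$. The base case $i=1$ is trivial since the right-hand side reduces to $\hat{L}_m^{(\alpha)}(x)$. For the inductive step, assuming the formula holds for some $i\geq 1$, I would apply $d/dx$ and use the first-order identity term by term:
\begin{equation*}
\frac{d^{i}}{dx^{i}}\hat{L}_m^{(\alpha)}(x)=\sum_{l=0}^{i-1}(-1)^l\binom{i-1}{l}\Bigl(\hat{L}_m^{(\alpha+l)}(x)-\hat{L}_m^{(\alpha+l+1)}(x)\Bigr).
\end{equation*}
Splitting the sum, reindexing the second piece via $l\mapsto l-1$, and applying Pascal's rule $\binom{i-1}{l}+\binom{i-1}{l-1}=\binom{i}{l}$ collapses the expression into $\sum_{l=0}^{i}(-1)^l\binom{i}{l}\hat{L}_m^{(\alpha+l)}(x)$, which is exactly the claim for $i+1$.

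I do not anticipate a serious obstacle: the only non-cosmetic content is the first-order derivative identity, which is essentially classical. The rest is a clean binomial induction and bookkeeping of signs via Pascal's rule.
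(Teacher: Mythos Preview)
Your proof is correct and essentially the same as the paper's: both establish the first-order identity $\frac{d}{dx}\hat{L}_m^{(\alpha)}=\hat{L}_m^{(\alpha)}-\hat{L}_m^{(\alpha+1)}$ from the Szeg\H{o} formulas (5.1.13)--(5.1.14) and then induct on $i$, with Pascal's rule handling the binomial bookkeeping. The only cosmetic difference is that the paper differentiates the first-order identity $(i-1)$ times and applies the induction hypothesis to each term, while you differentiate the induction hypothesis once and apply the first-order identity to each term; the two organizations are equivalent.
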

\begin{proof}
By induction on $i$. When $i=1$, both sides trivially coincide. Now suppose the claim has been proven for $i$; we prove it for $i+1$. Combining (5.1.13) and (5.1.14) in~\cite{Szego} and translating it via~\eqref{eq:monicLaguerre} yields the identity
\begin{equation}\label{eq:helpfuleqforLaguerre}
	\frac{d}{dx}\left(\hat{L}_{m}^{(\alpha)}(x)\right) 
		= \hat{L}_{m}^{(\alpha)}(x) - \hat{L}_{m}^{(\alpha+1)}(x).
\end{equation}
Taking the $(i-1)^\textrm{st}$ derivative on both sides of~\eqref{eq:helpfuleqforLaguerre}, applying the induction hypothesis for both terms on the right-hand side, as well as some elementary calculations, yields the result for~$i+1$.
\end{proof}

\begin{proof}[Proof of Proposition~\ref{prop:asymptoticsWL}]
It is sufficient to prove~\eqref{eq:asymptoticsOmega} because then~\eqref{eq:asymptoticsWL} follows directly from~\eqref{eq:OmegaWL}. Moreover, we only consider $\alpha\to\infty$ since replacing $\alpha$ by $-\alpha$ and $x$ by $-x$ then gives the result for $\alpha\to-\infty$.

The proof consists of two parts. Firstly, we rewrite the polynomial as a constant times the determinant of a matrix. Secondly, we derive the asymptotic behavior of all entries in this matrix to obtain the limiting behavior of the determinant. 

Consider the Wronskian involving Laguerre polynomials defined in~\eqref{eq:omega}. Using the Wronskian property~\eqref{eq:WronskianProperty1} for $h(x)=\alpha x$ we have
\begin{equation}\label{eq:proofasymptoticsLaguerre1}
	\frac{\hat{\Omega}_{\mu,\nu}^{(\alpha)}(\alpha x)}{\alpha^{|\mu|+|\nu|}} 
		=
		\frac{\kappa}{\alpha^{N}} 
		\,
		e^{-\ell(\nu)\alpha x} 
		\,
		\Wr[\hat{L}_{n_1}^{(\alpha)}(\alpha x),\dots,\hat{L}_{n_{\ell(\mu)}}^{(\alpha)}(\alpha x),e^{\alpha x}\hat{L}_{m_1}^{(\alpha)}(-\alpha x),\dots,e^{\alpha x}\hat{L}_{m_{\ell(\nu)}}^{(\alpha)}(-\alpha x)]
\end{equation}
where 
\begin{equation*}
	\kappa 
		= \frac{(-1)^{\sum\limits_{i=1}^{\ell(\nu)} m_{i}}}{\Delta(n_{\mu})\Delta(m_{\nu})},
	\qquad \qquad
	N
		= \sum\limits_{i=1}^{\ell(\mu)} n_{i}+\sum\limits_{i=1}^{\ell(\nu)} m_{i}+\ell(\mu)\ell(\nu).
\end{equation*}
Next, we distribute the prefactor $\alpha^{-N}$ over the Wronskian entries to write the right-hand side of~\eqref{eq:proofasymptoticsLaguerre1} as 
\begin{equation*}
	\kappa 
	\, 
	e^{-\ell(\nu)\alpha x}
	\,
	\Wr\left[\frac{\hat{L}_{n_1}^{(\alpha)}(\alpha x)}{\alpha^{n_1}},\dots,\frac{\hat{L}_{n_{\ell(\mu)}}^{(\alpha)}(\alpha x)}{\alpha^{n_{\ell(\mu)}}},\frac{e^{\alpha x}\hat{L}_{m_1}^{(\alpha)}(-\alpha x)}{\alpha^{m_1+\ell(\mu)}},\dots,\frac{e^{\alpha x}\hat{L}_{m_{\ell(\nu)}}^{(\alpha)}(-\alpha x)}{\alpha^{m_{\ell(\nu)}+\ell(\mu)}}\right].
\end{equation*}
We can therefore write  
\begin{equation*}
	\frac{\hat{\Omega}_{\mu,\nu}^{(\alpha)}(\alpha x)}{\alpha^{|\mu|+|\nu|}} 
		= \kappa \cdot
		\left|
			\begin{array}{cr}
			A & B \\
			C & D
			\end{array}
		\right|
\end{equation*}
where $A$ is a $\ell(\mu)\times\ell(\mu)$ square matrix and $D$ is a $\ell(\nu)\times\ell(\nu)$ square matrix and where the prefactor $e^{-\ell(\nu)\alpha x}$ is equally distributed over the last $\ell(\nu)$ columns. Explicitly, the entries of the four matrices are given by
\begin{align*}
	A_{ij} 
		&= \frac{d^{i-1}}{dx^{i-1}} \frac{\hat{L}_{n_j}^{(\alpha)}(\alpha x)}{\alpha^{n_j}}, 
		& 1\leq i,j\leq \ell(\mu), \\
	B_{i,j} 
		&= e^{-\alpha x}\frac{d^{i-1}}{dx^{i-1}} \left( \frac{e^{\alpha x} \hat{L}_{m_j}^{(\alpha)}(-\alpha x)}{\alpha^{m_j+\ell(\mu)}} \right), 
		& 1\leq i \leq \ell(\mu), \, 1\leq j \leq \ell(\nu), \\
	C_{ij} 
		&= \frac{d^{\ell(\mu)+i-1}}{dx^{\ell(\mu)+i-1}} \frac{\hat{L}_{n_j}^{(\alpha)}(\alpha x)}{\alpha^{n_j}}, 
		& 1\leq i \leq \ell(\nu), \, 1\leq j\leq \ell(\mu), \\
	D_{i,j} 
		&= e^{-\alpha x}\frac{d^{\ell(\mu)+i-1}}{dx^{\ell(\mu)+i-1}} \left( \frac{e^{\alpha x} \hat{L}_{m_j}^{(\alpha)}(-\alpha x)}{\alpha^{m_j+\ell(\mu)}} \right),
		&  1\leq i,j \leq \ell(\nu). 
\end{align*}
We rewrite the entries $B_{ij}$ and $D_{ij}$ in a more convenient form. Namely,  using~\eqref{eq:helpfuleqforLaguerre}, we have that for all $m\geq 0$
\begin{equation*}
	\frac{d}{dx}\left(e^{\alpha x} \, \hat{L}_m^{(\alpha)}(-\alpha x)\right)
		= \alpha \, e^{\alpha x} \, \hat{L}_m^{(\alpha+1)}(-\alpha x)
\end{equation*}
and using this repeatedly yields 
\begin{equation*}
B_{ij} = \frac{\hat{L}^{(\alpha+i-1)}_{m_j}(-\alpha x)}{\alpha^{m_j+\ell(\mu)-i+1}},
\qquad \qquad
D_{ij} = \frac{\hat{L}^{(\alpha+\ell(\mu)+i-1)}_{m_j}(-\alpha x)}{\alpha^{m_j-i+1}}.
\end{equation*}
We now intend to apply~\eqref{eq:Laguerrelimit} to the individual matrix entries to find the asymptotic behaviour of the determinant as $\alpha$ tends to infinity. The entries of $A$, $B$ and $C$ converge, namely
\begin{align}
\lim_{\alpha\rightarrow \infty} A_{ij} &= \frac{d^{i-1}}{dx^{i-1}}\big( (x-1)^{n_j} \big), \label{eq:limitA}\\
\lim_{\alpha\rightarrow \infty} B_{ij} &= 0,\label{eq:limitB} \\
\lim_{\alpha \rightarrow \infty} C_{ij} &=\frac{d^{\ell(\mu)+i-1}}{dx^{\ell(\mu)+i-1}}\big( (x-1)^{n_j} \big). \label{eq:limitC}
\end{align} 
However, for $i\geq 2$, one sees that the entries of $D_{ij}$ \textit{diverge} as $\alpha \rightarrow \infty$. To counter this, we perform row operations on the rows corresponding to the matrices $C$ and $D$. Namely, we replace $\text{row}\,(i)$ in these matrices with
\[\sum_{l=0}^{i-1} (-1)^l \binom{i-1}{l} \alpha^l \text{ row}\,(i-l).\]
This yields two new matrices $\tilde{C}$ and $\tilde{D}$; for $\tilde{C}$ it is only important that the entries are still convergent as $\alpha \rightarrow \infty$, since they are linear combinations of the (convergent) entries of $C$, see \eqref{eq:limitC}. Furthermore, we have
\begin{equation}\label{eq:omegadet}
	\frac{\hat{\Omega}_{\mu,\nu}^{(\alpha)}(\alpha x)}{\alpha^{|\mu|+|\nu|}} 
		= \kappa \cdot
		\left|
			\begin{array}{cr}
			A & B \\
			\tilde{C} & \tilde{D}
			\end{array}
		\right|
\end{equation}
because row operations do not change a determinant. By Lemma~\ref{lem:LaguerreProperty} we obtain that the entries of $\tilde{D}$ are given by
\begin{equation*}
	\tilde{D}_{i,j}
		= \frac{d^{i-1}}{dx^{i-1}} \left(\frac{\hat{L}_{m_j}^{(\alpha+\ell(\mu))}(-\alpha x)}{\alpha^{m_j}}\right)
\end{equation*}
and hence
\begin{equation}
\label{eq:limittildeD}
	\lim_{\alpha \to\infty} \tilde{D}_{i,j}
		= \frac{d^{i-1}}{dx^{i-1}} \left((-x-1)^{m_j}\right)
\end{equation}
for $1\leq i,j \leq \ell(\nu)$. Now, by~\eqref{eq:limitB} and the fact that the entries of $\tilde{C}$ are convergent, we conclude using~\eqref{eq:omegadet} that
\begin{equation*}
\lim_{\alpha\to\infty} \frac{\hat{\Omega}_{\mu,\nu}^{(\alpha)}(\alpha x)}{\alpha^{|\mu|+|\nu|}}  =  \frac{(-1)^{\sum\limits_{i=1}^{\ell(\nu)} m_{i}}}{\Delta(n_{\mu})\Delta(m_{\nu})} \left( \lim_{\alpha \rightarrow \infty} \det(A)\right) \left( \lim_{\alpha \rightarrow \infty} \det(\tilde{D})\right).
\end{equation*}
By~\eqref{eq:limitA} and~\eqref{eq:limittildeD} we have
\begin{align*}
\lim_{\alpha\to\infty} \det(A) &=\Delta(n_{\mu}) (x-1)^{|\mu|} \\
\lim_{\alpha\to\infty} \det(\tilde{D})&=(-1)^{\sum_j m_{j}} \Delta(m_{\nu}) (x+1)^{|\nu|}
\end{align*}
which then directly implies~\eqref{eq:asymptoticsOmega}, as desired.
\end{proof}

\begin{remark}
As stated before, we have that $\He_{\lambda}$ has no real zeros if and only if $\lambda$ is an even partition, i.e., $\ell(\lambda)$ is even and $\lambda_{2i}=\lambda_{2i-1}$ for all $i=1,2\dots,\ell(\lambda)/2$. Similarly, for the Wronskian involving Laguerre polynomials, see \eqref{eq:WL} or \eqref{eq:OmegaWL}, it is proven in \cite{Duran-Laguerre,Duran_Perez} that there are no zeros on the positive real line if and only if the parameter $\alpha$ and partitions $\mu$ and~$\nu$ satisfy an admissibility condition. Via \eqref{eq:WHermiteOmega} and \eqref{eq:WHermiteWLaguerre}, we can link both polynomials and therefore both conditions should be comparable. Hence a combinatorial interpretation of the admissibility condition in \cite{Duran-Laguerre,Duran_Perez} in terms of quotients seems reasonable, but is omitted here as it does not fit in the scope of this paper.
\end{remark}

%%% Local Variables: 
%%% mode: latex
%%% TeX-master: "../main"
%%% End: 

\section{\texorpdfstring{Generalization to $p$-cores and $p$-quotients}{Generalization to p-cores and p-quotients}}\label{sec:generalization}
The Hermite polynomials are characterized by  the recurrence relation~\eqref{eq:recurrenceHermite}, or by the exponential generating function 
\begin{equation*}
\sum_{n=0}^\infty \He_n(x) \frac{t^n}{n!} =  \exp\left(tx - \frac{t^2}{2}\right).
\end{equation*}
In the previous sections we showed that the coefficients of Wronskian Hermite polynomials can be understood in terms of the 2-core and 2-quotient of the associated partition. This section is dedicated to showing how these results generalize if one considers the general family of polynomials $(q_n)_{n=0}^\infty$ that  have exponential generating function
\begin{equation}\label{eq:expgenfamily}
	\sum_{n=0}^\infty q_n(x) \frac{t^n}{n!} 
		=  \exp\left(tx - \frac{t^p}{p}\right)
\end{equation}
for an integer $p\geq 2$. Note that we omit the label $p$ in the notation for clarity. These polynomials satisfy the recurrence
\begin{equation}\label{eq:recurrencefamily}
	q_n(x)
		= x q_{n-1}(x) - \frac{(n-1)!}{(n-p)!} q_{n-p}(x)
\end{equation}
for $n\geq p$, with initial conditions $q_n(x)=x^n$ for all $n<p$, and have explicit expansion
\begin{equation}\label{eq:expansionfamily}
	q_n(x) 
		= \sum_{j=0}^{\lfloor n/p \rfloor} (-1)^j \frac{n!}{j! (n-pj)! \,p^j} \,x^{n-pj}
\end{equation} 
for any $n\geq 0$. They are studied in the literature for various reasons. For $p$ odd, the polynomials play a role in the analysis of the rational solutions of the second and higher order Painlev\'{e} equations~\cite[Section 2.7]{Clarkson-survey}. For any $p\geq 2$, the polynomials are also known to be $(p-1)$-orthogonal polynomials~\cite{BenCheikh_Zaghouani} on the \textbf{$p$-star}
\[\bigcup_{l=0}^{p-1} [0,\infty) \times \omega_p^l\]
where $\omega_p$ is the $p^\textrm{th}$ root of unity. Note that in the case that $p=2$, the 2-star is the union of the positive and negative real line, that is the real line itself, which is the domain of orthogonality of the Hermite polynomials. It is well-known that for arbitrary $p$ the zeros of the polynomials~$q_n$ lie on the $p$-star. In fact, the polynomials $q_n$ have the symmetry $q_n(\omega_p \, x) = \omega_p^n \, q_n(x)$, which can either be seen inductively from~\eqref{eq:recurrencefamily} or directly from~\eqref{eq:expansionfamily}. 

Due to the specific form of the exponential generating function~\eqref{eq:expgenfamily}, we know that the sequence $(q_n)_{n=0}^\infty$ is an Appell sequence, that is $q'_n(x)=n q_{n-1}(x)$ for all $n\geq1$. In that context, these polynomial sequences were studied in~\cite[Section~7.2]{Bonneux_Hamaker_Stembridge_Stevens}. In particular, the  polynomials
\begin{equation}\label{eq:Wronskianfamily}
	q_\lambda 
		:= \frac{\Wr[q_{n_1},q_{n_2},\dots,q_{n_{\ell(\lambda)}}]}{\Delta(n_{\lambda})}
\end{equation}
were of interest, analogous to the Wronskian Hermite polynomials~\eqref{eq:WHP}. Amongst others, the specific form of the exponential generating function in~\eqref{eq:expgenfamily} ensures that every $q_\lambda$ has integer coefficients. 

The remainder of this section is dedicated to showing how some of the results from the previous sections for $p=2$ can be generalized to arbitrary $p\geq 2$. We give the necessary notions in Section~\ref{subsec:pcoresquotients} and the results in Section~\ref{subsec:generalizedresults}.

\subsection{\texorpdfstring{$p$-cores and $p$-quotients}{p-cores and p-quotients}}\label{subsec:pcoresquotients}
For our purposes, it is  important that the notion of removing domino tiles for the $p=2$ case should be replaced by removing border strips of size $p$; these are skew Young diagrams that have size $p$, are connected and do not contain any $2\times 2$ squares~\cite{MacDonald,Stanley_EC2}. We write $\gamma\in \mathcal{R}_-^p(\lambda)$ if $\gamma$ is obtained by removing such a border strip of size $p$ from the Young diagram of $\lambda$. For any such border strip, let the height of the border strip $\htt(\lambda/\gamma)$ be the number of rows of the skew Young diagram $\lambda/\gamma$ minus one. More generally, if $\bar{\lambda}$ is obtained from $\lambda$ by removing several border strips of size $p$ consecutively, then $\htt_p(\lambda/\bar{\lambda})$ denotes the sum of the heights of the removed border strips. Even if $\bar{\lambda}$ can be obtained in multiple ways from $\lambda$ by removing border strips, $\htt_p(\lambda/\bar{\lambda})$ is well-defined in this way.

The $p$-core associated to partition $\lambda$ is the partition $\bar{\lambda}$ that is obtained after removing as many border strips of size $p$ as possible; this uniquely defines the $p$-core of a partition. Equivalently, one can define the $p$-cores as all partitions whose hook lengths in the Young diagram are all not divisible by~$p$. 

On the other hand, the $p$-quotient is an ordered tuple $\mu=(\mu^0,\mu^{1},\dots,\mu^{p-1})$ of partitions. The set of all $p$-quotients forms the $p$-fold product lattice $\mathbb{Y}^p$ with natural ordering $\leq$ inherited from the product. The size of the  $p$-quotient is naturally defined as $\lvert \mu \rvert =\sum_i \lvert \mu^{i} \rvert$, and for any tuples $\mu,\tilde{\mu}\in \mathbb{Y}^p$ and integer $j\geq0$, we write $\tilde{\mu} <_j \mu$ if and only if $\tilde{\mu} \leq \mu$ and $\lvert \tilde{\mu} \rvert +j =\lvert \mu \rvert$; if $j=1$, we sometimes write $\tilde{\mu}\lessdot \mu$ instead of $<_1$.

For $\mu,\nu \in \mathbb{Y}^p$, the number of lattice paths from $\nu$ to $\mu$ is denoted by $F_{\mu/\nu}^{(p)}$ and this number is equal to
\begin{equation*}
	F_{\mu/\nu}^{(p)} 
		= \binom{\lvert \mu \rvert - \lvert \nu \rvert}{\lvert \mu^0 \rvert - \lvert \nu^0 \rvert, \lvert \mu^1 \rvert - \lvert \nu^1 \rvert, \dots, \lvert \mu^{p-1} \rvert - \lvert \nu^{p-1} \rvert} \prod_{i=0}^{p-1} F_{\mu^{i}/\nu^{i}}.
\end{equation*}
We write $F_{\mu}^{(p)}$ instead of $F_{\mu/\emptyset}^{(p)}$ when $\nu=\emptyset$ is the tuple of empty  partitions. 

The essential link between the notion of border strips and the $p$-quotient is the fact that~$\tilde{\lambda}$ is obtained by removing a border strip of size~$p$ from~$\lambda$ if and only if the~$p$-quotient $\tilde{\mu}$ of $\tilde{\lambda}$ and the $p$-quotient $\mu$ of $\lambda$ satisfy $\tilde{\mu} \lessdot \mu$. For the precise definitions of $p$-cores and $p$-quotients we refer to~\cite[I.1~Ex.~ 8]{MacDonald}. We note that the ordering of the partitions in a $p$-quotient is defined up to cyclic transformations. For the case $p=2$ we required that $k\geq 0$, which uniquely defines the precise order in the quotient. For a full-fledged generalization of the results in this paper to the case of general $p\geq 2$, one similarly needs to fix the ordering of the quotient. Nevertheless, in this section we only generalize a subset of the results and the ordering is unimportant for these results. Moreover, we note that just as for the construction of the 2-quotient described in Section~\ref{sec:coresandquotients} when $p=2$, the construction of the $p$-quotient is equivalent to the $p$-modular decomposition of Maya diagrams given in~\cite{Clarkson_GomezUllate_Grandati_Milson}.

\begin{remark}
The rational solutions of the fourth Painlev\'e equation are defined in terms of Wronskian Hermite polynomials. The partitions that label these polynomials belong to two separate classes. On the one hand one has the rectangular partitions $(m^n)$, which give rise to the so-called generalized Hermite polynomials \cite{Buckingham,Clarkson-zeros,Masoero_Roffelsen-2019}. On the other hand one has the class of partitions that give rise to the generalized Okamoto polynomials \cite{Clarkson-PIV,Kajiwara_Ohta,Novokshenov_Shchelkonogov,Noumi_Yamada,Masoero_Roffelsen}; these partitions are of the form
\begin{align*}
	&\lambda =(m+2n,m+2n-2,\dots,m+2,m,m,m-1,m-1,\dots,1,1) \text{ or }\\
	&\lambda =(m+2n-1,m+2n-3,\dots,m+1,m,m,m-1,m-1,\dots,1,1).
\end{align*}
These partitions are precisely the 3-cores~\cite{Noumi}. This observation can also be drawn when interpreting the results in \cite{Clarkson_GomezUllate_Grandati_Milson} in terms of partitions.
\end{remark} 

\subsection{Generalized results}\label{subsec:generalizedresults}
Most of the results from the main section for the Hermite $p=2$ case generalize to arbitrary $p\geq 2$. We omit the proofs of these results, since they follow directly from generalizing the proofs in the previous sections. This is all based on the fact that the Wronskian polynomials defined in~\eqref{eq:Wronskianfamily}  satisfy the generating recurrence
\begin{equation}\label{eq:GRRfamily}
	F_\lambda q_\lambda 
		=\frac{x}{\lvert \lambda \rvert} F_\lambda q_\lambda' + \frac{(\lvert \lambda \rvert -1)!}{(\lvert \lambda \rvert -r)!} \sum_{\tilde{\mu} \lessdot \mu} (-1)^{\htt(\lambda/\tilde{\lambda})} F_{\tilde{\lambda}} q_{\tilde{\lambda}}
\end{equation}
with $\mu\in\mathbb{Y}^p$ being the $p$-quotient of $\lambda$ and $\tilde{\mu}\in\mathbb{Y}^p$ the $p$-quotient of $\tilde{\lambda}$. This result follows from interpreting the generating recurrence from~\cite[Section~7.2]{Bonneux_Hamaker_Stembridge_Stevens} in terms of cores and quotients. In particular, this implies that one has a similar factorization as that given in Theorem~\ref{thm:WHdecomposition}. For this, we generalize the notion of $H_{\odd}(\lambda)$ when $p=2$, which is the product of all odd hook lengths in the Young diagram of $\lambda$, to the notion of $H_{\text{non-$p$-fold}}(\lambda)$, which is the product of all hook lengths that are not a multiple of $p$.

\begin{theorem}\label{thm:factorizationfamily}
For any $p\geq 2$ and any partition $\lambda$ with $p$-core $\bar{\lambda}$ and $p$-quotient $\mu \in \mathbb{Y}^p$ we have
\begin{equation}\label{eq:factorizationfamily}
	q_\lambda(x) =
		x^{|\bar{\lambda}|} R_{\lambda}(x^p)
\end{equation}
where $R_\lambda$ is a monic polynomial of degree $|\mu|$ with non-vanishing constant coefficient
\begin{equation*}
	R_{\lambda}(0)
		= (-1)^{h_{\lambda}} \frac{H_{\text{non-$p$-fold}}(\lambda)}{H(\bar{\lambda})}
\end{equation*}
where $h_{\lambda}=\htt_p(\lambda/\bar{\lambda})+(|\lambda|-|\bar{\lambda}|)/p$.
\end{theorem}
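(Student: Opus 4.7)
My plan is to mirror the proof of Theorem~\ref{thm:WHdecomposition} structurally, using the generating recurrence~(\ref{eq:GRRfamily}) in place of~(\ref{eq:generatingrecurrence}) and inducting on $|\mu|$. When $|\mu|=0$, $\lambda=\bar{\lambda}$ is already a $p$-core, so no $p$-border strip can be removed and the sum in~(\ref{eq:GRRfamily}) is empty. The recurrence therefore collapses to $|\lambda|\,q_\lambda(x)=x\,q_\lambda'(x)$, and Euler-homogeneity combined with monicity forces $q_\lambda(x)=x^{|\bar{\lambda}|}$; setting $R_\lambda\equiv 1$ then satisfies both conclusions, since every hook length of a $p$-core is non-$p$-fold.

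For the inductive step with $|\mu|>0$, I would take the $m$-th derivative of~(\ref{eq:GRRfamily}) and evaluate at the origin. Using $(xq_\lambda'(x))^{(m)}\big|_{x=0}=m\,q_\lambda^{(m)}(0)$, the identity rearranges (after isolating $q_\lambda^{(m)}(0)$) into
\begin{equation*}
F_\lambda\,\frac{q_\lambda^{(m)}(0)}{m!}=\pm\,\frac{|\lambda|}{|\lambda|-m}\cdot\frac{(|\lambda|-1)!}{(|\lambda|-p)!}\sum_{\tilde{\mu}\lessdot\mu}(-1)^{\htt_p(\lambda/\tilde{\lambda})}\,F_{\tilde{\lambda}}\,\frac{q_{\tilde{\lambda}}^{(m)}(0)}{m!},
\end{equation*}
where the global sign is dictated by the convention in~(\ref{eq:GRRfamily}). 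Every $\tilde{\lambda}$ appearing in the sum shares the $p$-core $\bar{\lambda}$ with $\lambda$, so the induction hypothesis kills every term on the right when $0\le m<|\bar{\lambda}|$; hence $q_\lambda$ vanishes at the origin to order at least $|\bar{\lambda}|$.

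The even/odd dichotomy available for $p=2$ is now replaced by a $p$-fold rotational symmetry. Expansion~(\ref{eq:expansionfamily}) immediately gives $q_n(\omega_p x)=\omega_p^n q_n(x)$; iterating the chain rule yields $q_n^{(i-1)}(\omega_p x)=\omega_p^{n-i+1}\,q_n^{(i-1)}(x)$, and pulling factors of $\omega_p^{n_j}$ out of column $j$ and $\omega_p^{-(i-1)}$ out of row $i$ of the defining determinant, together with $\sum_j n_j-\tfrac{\ell(\ell-1)}{2}=|\lambda|$, produces $q_\lambda(\omega_p x)=\omega_p^{|\lambda|}q_\lambda(x)$. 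So only exponents $k\equiv|\lambda|\pmod p$ can appear in $q_\lambda$; combined with $|\lambda|\equiv|\bar{\lambda}|\pmod p$ (border strips have size $p$) and the vanishing-order bound above, this forces the factorization~(\ref{eq:factorizationfamily}), with $R_\lambda$ monic of degree $|\mu|$ because $|\lambda|=|\bar{\lambda}|+p|\mu|$ and $q_\lambda$ is monic.

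Finally, $R_\lambda(0)$ is pinned down by setting $m=|\bar{\lambda}|$ in the displayed relation and substituting the inductive formula for each $R_{\tilde{\lambda}}(0)$. The sign bookkeeping reduces to the congruence $h_\lambda\equiv h_{\tilde{\lambda}}+\htt_p(\lambda/\tilde{\lambda})+1\pmod 2$, which holds because adding a single $p$-border strip increases $(|\lambda|-|\bar{\lambda}|)/p$ by one. The numerical collapse then requires two inputs: the generalized hook-length identity
\begin{equation*}
F_\lambda=\frac{|\lambda|!}{H_{\text{non-$p$-fold}}(\lambda)\,p^{|\mu|}\,\prod_{i=0}^{p-1}H(\mu^i)},
\end{equation*}
which replaces~(\ref{eq:Flambdaquotient}) and reflects the classical fact that the $p$-divisible hook lengths of $\lambda$, when divided by $p$, form the disjoint union of the hook multisets of the components $\mu^i$ of the $p$-quotient, together with the elementary identity $\sum_{\tilde{\mu}\lessdot\mu}\bigl(\prod_i H(\tilde{\mu}^i)\bigr)^{-1}=|\mu|\bigl(\prod_i H(\mu^i)\bigr)^{-1}$, obtained by summing the Young-lattice recursion $\sum_{\tilde{\mu}^i\lessdot\mu^i}F_{\tilde{\mu}^i}=F_{\mu^i}$ across the $p$ components of the quotient. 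I expect the main obstacle to be the careful, sign-uniform handling of $\htt_p$ (in particular its well-definedness modulo $2$ under different $p$-border-strip tilings, which for $p=2$ is the classical domino-tiling parity invariance) together with the hook-length factorization above; every other step is a direct translation of the $p=2$ argument.
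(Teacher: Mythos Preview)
Your proposal is correct and follows exactly the approach the paper intends: the paper explicitly omits the proof of Theorem~\ref{thm:factorizationfamily}, stating it ``follows directly from generalizing the proofs in the previous sections,'' and you have carried out precisely that generalization of the proof of Theorem~\ref{thm:WHdecomposition}. Your treatment is in fact slightly more self-contained than the $p=2$ original in two places: you derive $q_{\bar{\lambda}}(x)=x^{|\bar{\lambda}|}$ directly from the Euler-type equation obtained when the sum in~(\ref{eq:GRRfamily}) is empty (whereas the paper's $p=2$ base case cites an external lemma), and you explicitly propagate the rotational symmetry $q_n(\omega_p x)=\omega_p^n q_n(x)$ through the Wronskian to obtain $q_\lambda(\omega_p x)=\omega_p^{|\lambda|}q_\lambda(x)$, which the paper takes for granted.
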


Again, we call $R_\lambda$ the remainder polynomial and we omit the dependency on $p$ from the notation. As in Section~\ref{sec:2coresandWHP}, we have the following two corollaries. The first one uses the fact that $q_{\lambda}(x)\in\mathbb{Z}[x]$ for any partition $\lambda$, while the other one is based on the identity ${q_{\lambda}(x) = \omega_p^{|\lambda|} q_{\lambda'}(\omega_p^{-1}x)}$ for ${\omega_p=-\exp(i\pi/p)}$; see~\cite[Section~7.2]{Bonneux_Hamaker_Stembridge_Stevens}.

\begin{corollary}\label{cor:intp}
For any integer $p\geq2$ and for any partition $\lambda$ with $p$-core $\bar{\lambda}$ we have 
$$
\frac{H_{\text{non-$p$-fold}}(\lambda)}{H(\bar{\lambda})} \in \mathbb{Z}.
$$
\end{corollary}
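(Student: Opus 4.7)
The plan is to mirror exactly the short argument used for Corollary~\ref{cor:int2} in the $p=2$ setting, now invoking the already-generalized pieces of the machinery. Theorem~\ref{thm:factorizationfamily} provides the factorization
\begin{equation*}
q_\lambda(x) = x^{|\bar\lambda|}\,R_\lambda(x^p),
\end{equation*}
and it explicitly identifies the constant term of the remainder polynomial as
\begin{equation*}
R_\lambda(0) = (-1)^{h_\lambda}\,\frac{H_{\text{non-$p$-fold}}(\lambda)}{H(\bar\lambda)}.
\end{equation*}
In particular the ratio $H_{\text{non-$p$-fold}}(\lambda)/H(\bar\lambda)$ equals $\pm R_\lambda(0)$, so it suffices to show that $R_\lambda(0)$ is an integer.

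For this I would extract $R_\lambda(0)$ as an actual coefficient of $q_\lambda$: by the factorization, $R_\lambda(0)$ is precisely the coefficient of $x^{|\bar\lambda|}$ in $q_\lambda(x)$. The excerpt already recalled, just above Section~\ref{subsec:pcoresquotients}, that the choice of generating function~\eqref{eq:expgenfamily} guarantees $q_\lambda\in\mathbb Z[x]$ for every partition $\lambda$ (this is the $p$-analogue of the integrality result from \cite{Bonneux_Hamaker_Stembridge_Stevens} cited in the proof of Corollary~\ref{cor:int2}). Hence every coefficient of $q_\lambda$, and in particular the coefficient of $x^{|\bar\lambda|}$, lies in $\mathbb Z$, so $R_\lambda(0)\in\mathbb Z$ and therefore $H_{\text{non-$p$-fold}}(\lambda)/H(\bar\lambda)\in\mathbb Z$.

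There is no real obstacle here: all the work is done by Theorem~\ref{thm:factorizationfamily} and the integrality of the coefficients of $q_\lambda$. The only point worth being explicit about is that the sign $(-1)^{h_\lambda}$ is irrelevant to integrality, and that one must appeal to the $p$-general integrality statement for $q_\lambda$ (rather than the $p=2$ statement for $\He_\lambda$), which is exactly the input the preceding paragraph of the paper flags.
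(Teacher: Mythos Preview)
Your argument is correct and matches the paper's own (implicit) proof: the paper states just before Corollary~\ref{cor:intp} that it follows, as in Section~\ref{sec:2coresandWHP}, from Theorem~\ref{thm:factorizationfamily} together with the fact that $q_\lambda(x)\in\mathbb{Z}[x]$ for every partition $\lambda$. Your only addition is making explicit that $R_\lambda(0)$ is the coefficient of $x^{|\bar\lambda|}$ in $q_\lambda$, which is a harmless clarification.
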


\begin{corollary}
For any $p\geq 2$ and any partition $\lambda$ with $p$-quotient $\mu\in\mathbb{Y}^p$ we have 
\begin{equation*}
	R_{\lambda}(x)
		= (-1)^{(p-1)|\mu|} \, R_{\lambda'}\big((-1)^{p-1} x\big)
\end{equation*}
where $\lambda'$ denotes the conjugate partition to $\lambda$.
\end{corollary}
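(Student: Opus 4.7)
The proof proposal follows the same template as Corollary~\ref{cor:Rlambda} (the $p=2$ case), simply tracking signs carefully for general $p$. The plan is to start from the conjugation identity for $q_\lambda$ recalled in the hint, namely
\[
q_{\lambda}(x) = \omega_p^{|\lambda|} q_{\lambda'}(\omega_p^{-1}x) \qquad \text{with } \omega_p = -\exp(i\pi/p),
\]
which is proved in~\cite{Bonneux_Hamaker_Stembridge_Stevens}, and combine it with the factorization $q_\lambda(x)=x^{|\bar\lambda|}R_\lambda(x^p)$ from Theorem~\ref{thm:factorizationfamily} (applied to both $\lambda$ and $\lambda'$).

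First I would record two combinatorial invariances under conjugation: the $p$-core of $\lambda'$ is the conjugate of $\bar\lambda$ (hence $|\bar{\lambda'}|=|\bar\lambda|$), and the $p$-quotient of $\lambda'$ is a rearrangement (and conjugation) of $\mu$, so in particular $|\mu'|=|\mu|$. Both facts are standard consequences of the description of $p$-cores and $p$-quotients via Maya diagrams (conjugation amounts to the reflection $z\mapsto -z-1$), and together with the generalization of~\eqref{eq:sizelambda} they give $|\lambda|-|\bar\lambda|=p|\mu|$ and the same for $\lambda'$.

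Next I would compute the key scalar $\omega_p^p = (-1)^p e^{i\pi} = (-1)^{p-1}$, so that $(\omega_p^{-1}x)^p = (-1)^{p-1}x^p$. Substituting the factorization on both sides of the conjugation identity gives
\begin{equation*}
x^{|\bar\lambda|}R_\lambda(x^p) = \omega_p^{|\lambda|}\,(\omega_p^{-1}x)^{|\bar\lambda|} R_{\lambda'}\bigl((-1)^{p-1}x^p\bigr)
= \omega_p^{|\lambda|-|\bar\lambda|}\, x^{|\bar\lambda|}\, R_{\lambda'}\bigl((-1)^{p-1}x^p\bigr).
\end{equation*}
Cancelling $x^{|\bar\lambda|}$ and using $|\lambda|-|\bar\lambda|=p|\mu|$ together with $\omega_p^{p|\mu|}=(-1)^{(p-1)|\mu|}$ then yields the identity
\[
R_\lambda(x^p) = (-1)^{(p-1)|\mu|}\, R_{\lambda'}\bigl((-1)^{p-1}x^p\bigr),
\]
and setting $y=x^p$ (which is legitimate since $R_\lambda$ is a polynomial, so the identity between polynomials in $x^p$ extends to an identity of polynomials in $y$) gives the claim.

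The argument is essentially bookkeeping, and there is no serious obstacle; the main point to be careful about is keeping track of whether $\omega_p^p=-1$ or $+1$ according to the parity of $p$, which is why the exponent in the final statement involves $p-1$. If desired, one could avoid invoking the conjugation identity for $q_\lambda$ at the level of Wronskians and instead mimic the derivation in Theorem~\ref{thm:factorizationfamily}, but using the identity is considerably shorter.
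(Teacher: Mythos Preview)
Your proposal is correct and follows exactly the approach the paper indicates: the paper omits the proof entirely, merely noting that the corollary ``is based on the identity $q_{\lambda}(x) = \omega_p^{|\lambda|} q_{\lambda'}(\omega_p^{-1}x)$ for $\omega_p=-\exp(i\pi/p)$'' from~\cite[Section~7.2]{Bonneux_Hamaker_Stembridge_Stevens}, and you have carried out precisely the bookkeeping this entails. Your computation $\omega_p^p=(-1)^p e^{i\pi}=(-1)^{p-1}$ and the subsequent cancellation are correct, and your remarks on the invariance of $|\bar\lambda|$ and $|\mu|$ under conjugation supply the details the paper leaves implicit.
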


We expand the remainder polynomial as
\begin{equation}\label{eq:expansionremainderfamily}
	R_\lambda(x)
		=\sum_{j=0}^{\lvert \mu \rvert} r_{\lambda,j} \, x^{\lvert \mu \rvert -j}
\end{equation}
and, by~\eqref{eq:GRRfamily}, the recurrence for the coefficients becomes
\begin{equation}\label{eq:rp}
	F_{\lambda} r_{\lambda,j} 
		= - \frac{\lvert \lambda \rvert!}{pj (\lvert \lambda \rvert -p)!} \sum_{\tilde{\mu} \lessdot \mu} (-1)^{\htt_p(\lambda/\tilde{\lambda})} F_{\tilde{\lambda}} \,r_{\tilde{\lambda},j-1}
\end{equation}
where again $\tilde{\mu}\in\mathbb{Y}^p$ and $\mu\in\mathbb{Y}^p$ are the $p$-quotients of $\tilde{\lambda}$ and $\lambda$, respectively, and $j=0,1,\dots,|\mu|$. The relation \eqref{eq:rp} generates all coefficients of the remainder polynomial if one takes into account that $r_{\lambda,0}=1$ for all $\lambda$. In fact, applying \eqref{eq:rp} recursively~$j$ times yields the explicit expansion for the Wronskian polynomials  stated in the following theorem. It should be compared to~\eqref{eq:expansionfamily} and it naturally generalizes Theorem~\ref{thm:WHcoeff} using the character values of cycle type $(p^j, 1^{\lvert \lambda \rvert-pj})$.

\begin{theorem}\label{thm:coefffamily}
For any partition $\lambda$ we have
\begin{equation}\label{eq:coefffamily}
	F_\lambda q_{\lambda}(x)
		=\sum_{j=0}^{\lfloor\lvert \lambda \rvert/p \rfloor} (-1)^j \frac{\lvert \lambda \rvert!}{j! (\lvert \lambda \rvert-pj)! p^j} \,a_p(\lambda,j) \,x^{\lvert \lambda \rvert-pj}
\end{equation}
with $a_p(\lambda,j)$ being the character of the conjugacy class of the cycle type $(p^j,1^{\lvert \lambda \rvert-pj})$ of the irreducible representation associated to the partition $\lambda$ of the symmetric group $S_{|\lambda|}$. 
\end{theorem}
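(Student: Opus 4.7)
The plan is to mimic the proof of Theorem \ref{thm:WHcoeff} \emph{mutatis mutandis}, replacing the recurrence \eqref{eq:recurrencecoefficients} by its $p$-analogue \eqref{eq:rp} and invoking the Murnaghan--Nakayama rule for border strips of size $p$ rather than for dominoes. The three key moves are: obtain a closed-form expression for $r_{\lambda,j}$ as a signed sum of $F_{\tilde\lambda}$; identify that signed sum with the character value $a_p(\lambda,j)$; and repackage the result via the factorization \eqref{eq:factorizationfamily}.

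First, I would iterate \eqref{eq:rp} starting from the initial condition $r_{\lambda,0}=1$. At each of $j$ successive unfoldings the prefactor telescopes: the factor $F_{\tilde{\lambda}}$ appearing on the right cancels the implicit $F_{\tilde{\lambda}}$ on the left of the next application, and the numerical prefactors combine into a single constant depending only on $|\lambda|$, $p$ and $j$. After $j$ iterations I expect
\[
F_\lambda \, r_{\lambda,j} \;=\; (-1)^{j}\,\frac{|\lambda|!}{j!\,(|\lambda|-pj)!\,p^{j}} \sum_{\tilde{\mu} <_{j} \mu} (-1)^{\htt_p(\lambda/\tilde{\lambda})}\, F_{\tilde{\lambda}},
\]
where $\tilde{\lambda}$ is the partition with $p$-quotient $\tilde{\mu}$ and the same $p$-core as $\lambda$. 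This is the verbatim $p$-analogue of the identity used in the proof of Theorem \ref{thm:WHcoeff}.

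Next, I would use the bijection between descending saturated chains of length $j$ in $\mathbb{Y}^p$ starting at $\mu$ and sequences of $j$ consecutive border-strip removals of size $p$ from $\lambda$, together with the fact that $(-1)^{\htt_p(\lambda/\tilde{\lambda})}$ depends only on the endpoint $\tilde{\lambda}$ (see the discussion preceding \eqref{eq:factorizationfamily}). The Murnaghan--Nakayama rule, as recorded in \cite[I.7,~Ex.~5]{MacDonald} or \cite[Eq.~(7.75)]{Stanley_EC2}, then identifies the signed sum precisely with $a_p(\lambda,j)$, the value of the irreducible character $\chi_\lambda$ on the conjugacy class of cycle type $(p^{j},1^{|\lambda|-pj})$.

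Finally, I would combine Theorem \ref{thm:factorizationfamily} with the expansion \eqref{eq:expansionremainderfamily}. Using the $p$-version $|\lambda|=|\bar{\lambda}|+p|\mu|$ of \eqref{eq:sizelambda}, the exponent of $x$ in the $j$-th term of $q_\lambda$ is $|\bar{\lambda}|+p(|\mu|-j)=|\lambda|-pj$, so $q_\lambda(x)=\sum_{j=0}^{|\mu|} r_{\lambda,j}\, x^{|\lambda|-pj}$. Multiplying by $F_\lambda$ and substituting the closed form above yields \eqref{eq:coefffamily}, and the upper index may be extended freely to $\lfloor|\lambda|/p\rfloor$ because for $j>|\mu|$ no length-$j$ chain of $p$-strip removals exists and the Murnaghan--Nakayama rule forces $a_p(\lambda,j)=0$. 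The main (though essentially technical) point to verify is that the sign $(-1)^{\htt_p}$ is well-defined on multi-strip removals and matches the Murnaghan--Nakayama convention; once that is in place the argument reduces to bookkeeping and mirrors the $p=2$ case.
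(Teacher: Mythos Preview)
Your proposal is correct and follows essentially the same approach as the paper. The paper omits the proof of Theorem~\ref{thm:coefffamily}, noting only that it follows by ``applying~\eqref{eq:rp} recursively~$j$ times'' to generalize the proof of Theorem~\ref{thm:WHcoeff}; you have carried out precisely this generalization, iterating the $p$-analogue of the coefficient recurrence, identifying the resulting signed sum with $a_p(\lambda,j)$ via the Murnaghan--Nakayama rule, and repackaging through the factorization~\eqref{eq:factorizationfamily}.
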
 

\begin{remark}
In~\cite{Bonneux_Hamaker_Stembridge_Stevens}, it was shown that the average of the Wronskian polynomials with respect to the Plancherel measure is simply the monomial; that is
\[\sum_{\lambda \vdash n} \frac{F_\lambda^2}{n!} q_\lambda(x) = x^n.\]
As in Remark~\ref{rem:character}, this result is also equivalent to the orthogonality of characters, but now those evaluated in cycle types $(p^j,1^{n-pj})$.
\end{remark}

Theorem~\ref{thm:coefficients} also has an analogue for general $p\geq2$.

\begin{theorem}\label{thm:coefficientsfamily}
Let $\lambda$ be a partition with $p$-quotient $\mu \in \mathbb{Y}^p$. Then the coefficients of the remainder polynomial $R_\lambda$, defined in~\eqref{eq:factorizationfamily} and~\eqref{eq:expansionremainderfamily}, are given by
\begin{equation}\label{eq:coefficientsfamily}
	r_{\lambda,j}
	= (-1)^j \binom{|\mu|}{j} 
	\sum_{\tilde{\mu}<_j \mu} (-1)^{\htt_p(\lambda/\tilde{\lambda})}
	\frac{F^{(p)}_{\tilde{\mu}} \,F^{(p)}_{\mu/\tilde{\mu}}}{F^{(p)}_{\mu}} 
	\,
	\frac{H_{\text{non-$p$-fold}}(\lambda)}{H_{\text{non-$p$-fold}}(\tilde{\lambda})}
\end{equation}
for $j=0,1,\dots,|\mu|$, and where the partition $\tilde{\lambda}$ has $p$-quotient $\tilde{\mu}\in\mathbb{Y}^p$ and the same $p$-core as $\lambda$.
\end{theorem}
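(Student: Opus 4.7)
My plan is to mirror the induction-on-$j$ argument used for Theorem~\ref{thm:coefficients}, replacing the $p=2$ combinatorial identities with their $p$-generalizations throughout. The base case $j=0$ is immediate: the sum on the right of~\eqref{eq:coefficientsfamily} has a single term with $\tilde\mu=\mu$, and the fractions are identically one, so $r_{\lambda,0}=1$.

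For the inductive step, I would feed the induction hypothesis into the generalized recurrence~\eqref{eq:rp}. This produces a double sum, first over $\tilde\mu\lessdot\mu$ and then over $\hat\mu<_{j-1}\tilde\mu$. The sign combines correctly because $\htt_p(\lambda/\tilde\lambda)+\htt_p(\tilde\lambda/\hat\lambda)\equiv\htt_p(\lambda/\hat\lambda)\pmod 2$, and because of the extra factor $(-1)$ inherited from~\eqref{eq:rp} we get $(-1)^{j-1}\cdot(-1)=(-1)^j$. Next I would clean up the $F$- and $H$-factors using the generalizations
\begin{equation*}
    F_\lambda=\frac{|\lambda|!}{H_{\text{non-$p$-fold}}(\lambda)\,p^{|\mu|}\prod_{i=0}^{p-1}H(\mu^i)},
    \qquad
    F^{(p)}_{\mu}=\binom{|\mu|}{|\mu^0|,\dots,|\mu^{p-1}|}\prod_{i=0}^{p-1}F_{\mu^{i}},
\end{equation*}
which follow respectively from the fact that the hook lengths of $\lambda$ divisible by $p$ are precisely $p$ times the hook lengths of $\mu^0,\dots,\mu^{p-1}$, and from the direct combinatorial interpretation of lattice paths in $\mathbb{Y}^p$. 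Rewriting
\begin{equation*}
    \frac{|\lambda|(|\lambda|-1)\cdots(|\lambda|-p+1)}{pj}\cdot\frac{F_{\tilde\lambda}}{F_\lambda}
    \cdot\frac{F^{(p)}_{\hat\mu}\,F^{(p)}_{\tilde\mu/\hat\mu}}{F^{(p)}_{\tilde\mu}}\cdot\frac{H_{\text{non-$p$-fold}}(\tilde\lambda)}{H_{\text{non-$p$-fold}}(\hat\lambda)}
\end{equation*}
with the two hook-product formulas above reduces (after the $|\lambda|$-factors cancel against the multinomials) to $\binom{|\mu|}{j}\,F^{(p)}_{\hat\mu}F^{(p)}_{\tilde\mu/\hat\mu}/F^{(p)}_\mu$ times $H_{\text{non-$p$-fold}}(\lambda)/H_{\text{non-$p$-fold}}(\hat\lambda)$, with the dependence on the intermediate $\tilde\mu$ now confined to the single factor $F^{(p)}_{\tilde\mu/\hat\mu}$.

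After interchanging the two sums so that $\hat\mu<_{j-1}\tilde\mu\lessdot\mu$ is reindexed to give an outer sum over $\hat\mu<_j\mu$ (note $|\mu|-|\hat\mu|=j$), the inner sum becomes $\sum_{\tilde\mu\,:\,\hat\mu\lessdot\tilde\mu\lessdot\mu}F^{(p)}_{\tilde\mu/\hat\mu}$. This is precisely $F^{(p)}_{\mu/\hat\mu}$, since the left-hand side counts saturated chains from $\hat\mu$ to $\tilde\mu$ (length $j-1$) and then appends the cover $\tilde\mu\lessdot\mu$, i.e.\ it counts all saturated chains of length $j$ from $\hat\mu$ to $\mu$. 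Renaming $\hat\mu\mapsto\tilde\mu$ yields~\eqref{eq:coefficientsfamily} exactly.

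The main obstacle is just the bookkeeping for the generalized hook- and multinomial-factorizations: for $p=2$ one handles two binomial coefficients, but for general $p$ the multinomial
$\binom{|\mu|}{|\mu^0|,\dots,|\mu^{p-1}|}$ and its analog for $\tilde\mu$ must be carefully tracked and seen to cancel correctly against the $p^{|\mu|}$ and $p^{|\mu|-1}$ factors appearing through $F_\lambda$ and $F_{\tilde\lambda}$. Once these identities are set up, the interchange-of-sums step is formally identical to the $p=2$ case. No new conceptual input beyond the correct $p$-analogues of the hook-length decomposition of $F_\lambda$ is required, and the character-theoretic formula~\eqref{eq:coefffamily} offers a useful sanity check by summing~\eqref{eq:coefficientsfamily} over appropriate data.
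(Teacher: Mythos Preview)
Your proposal is correct and follows exactly the approach the paper intends: the paper explicitly omits the proof of Theorem~\ref{thm:coefficientsfamily}, stating that it ``follow[s] directly from generalizing the proofs in the previous sections,'' and your induction-on-$j$ argument is precisely that generalization of the proof of Theorem~\ref{thm:coefficients}. One small notational slip: in your inner sum you write $\hat\mu\lessdot\tilde\mu\lessdot\mu$, but the first relation should be $\hat\mu\leq\tilde\mu$ (equivalently $\hat\mu<_{j-1}\tilde\mu$); your verbal explanation of the path-counting makes clear you have the right picture.
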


\begin{remark}
In this section, we have shown how the results from the previous section extend from $p=2$ to $p\geq 2$. However, for $p=1$, all the results trivialize. Namely, if $p=1$, the exponential generating function~\eqref{eq:expgenfamily} defines $q_n(x)=(x-1)^n$ for all $n\geq0$. This then leads to the fact that $q_\lambda(x) =(x-1)^{\lvert \lambda \rvert}$ for all $\lambda$. Everything trivializes since the 1-core of every partition is the empty partition, and the 1-quotient is the partition itself. For example, the coefficients in~\eqref{eq:coefficientsfamily} are the coefficients of the binomial expansion of $(x-1)^{|\lambda|}$ when $p=1$.
\end{remark}

%%% Local Variables: 
%%% mode: latex
%%% TeX-master: "../main"
%%% End: 

\section{Conclusion and further research}
In this paper we have given a combinatorial interpretation for the coefficients of Wronskian Hermite polynomials with the core and quotient representation of a partition as the main ingredient. The framework elaborates the fact that the polynomial properties are directly related to aspects of the associated partition. We believe that this is further evidence that the use of partitions is the most convenient and elegant way to treat these polynomials,  and their natural generalizations.

An open problem for Wronskian Hermite polynomials concerns the multiplicity of the zeros. Veselov conjectured that all zeros not at the origin must be simple~\cite{Felder_Hemery_Veselov}, and it was known that the multiplicity at the origin is a triangular number. The latter statement is now proven by Theorem~\ref{thm:WHdecomposition}. Moreover, Veselov's conjecture is now equivalent to stating that the remainder polynomial $R_\lambda$ has simple zeros. A possible way of approaching the conjecture is by showing that the \textit{discriminant} of $R_\lambda$ is always non-zero. We believe that it is natural to study this question in terms of cores and quotients. According to Theorem~\ref{thm:coeffpoly}, we have that for a fixed quotient $(\mu,\nu)$, the coefficients of $\He_{\Phi(\mu,\nu,k)}$ are polynomials in~$k$, where $k$ is the length of the core. This means that for a fixed quotient, the discriminant of $R_{\Phi(\mu,\nu,k)}$ is also polynomial in~$k$, and therefore has finitely many zeros. Numerical evidence suggests that the values of~$k$ where the discriminant of $R_{\Phi(\mu,\nu,k)}$ is zero are non-integer values, and so the truth of this statement for every quotient $(\mu,\nu)$ would prove Veselov's conjecture. In fact, we believe that the values of~$k$ where the discriminant of $R_{\Phi(\mu,\nu,k)}$ is zero precisely coincide with the values of $k$ where the Wronskian involving Laguerre polynomial~\eqref{eq:WL} has non-simple zeros, based on Proposition~\ref{prop:WHermiteWLaguerre} and Corollary~\ref{cor:WHermiteOmega}. A possible starting point is the work of~\cite{Roberts} where the author derived explicit expressions for the discriminant of Yablonskii-Vorobiev and other polynomials related to rational solutions of Painlev\'e equations. However,  these solutions are always associated to specific choices of partitions and the method in~\cite{Roberts} does not naively  extend to all Wronskian Hermite polynomials. 

In Section~\ref{sec:generalization}, we showed that many of the results that hold for Wronskian Hermite polynomial extend to the Wronskian polynomials associated to polynomial sequences satisfying~\eqref{eq:expgenfamily}. However, we have not generalized all results available for the Hermite $p=2$ case. In principle, this is due to the fact that a description of a partition in terms of a 2-core and a 2-quotient is significantly easier than the description in terms of a $p$-core and a $p$-quotient. First of all, a 2-core is described by the one parameter $k$ that we used throughout this article; a $p$-core depends on $p-1$ parameters, that one might suitably label $k_1,k_2,\dots,k_{p-1}$. At this moment, it is unclear how to do this precisely. Secondly, as mentioned in Section~\ref{sec:generalization}, the ordering of the quotient is usually only defined up to cyclic transformation. For $p=2$, we fixed the ordering by requiring that $k\geq 0$. Many of the results we have for Wronskian Hermite polynomials depend on the specific ordering of the 2-quotient $(\mu,\nu)$; for example, the asymptotic result in Theorem~\ref{thm:asymptotics} is not symmetric in interchanging $\mu$ and $\nu$. Therefore, it is expected that generalizations of these results also depend on the way the ordering is fixed. Writing down the natural way of doing this in this context is part of our future work.

Another related research problem is to describe  the exact location of the zeros of the Wronksian Hermite polynomials. In Section~\ref{sec:asymptoticzeros}, we observed that when the size of the core is sufficiently large, then the location of the zeros of the remainder polynomial  are related to the quotient partitions in the way shown in Figure~\ref{fig:AsymptoticBehaviorExamplebis}. More detailed studies should help us to gain an understanding of how the zeros behave as the core size increases. Again, we anticipate that cores, quotients and Maya diagrams of partitions will play a key r\^ole in this aspect of the story. 

According to Theorem~\ref{thm:WHcoeff}, the coefficients of Wronskian Hermite polynomials are connected to the characters of irreducible representations of the symmetric group. Specifically, one needs the characters evaluated in cycle types of the form $(2^j,1^{n-2j})$. In Theorem~\ref{thm:coefffamily} we showed how the characters evaluated in the cycle types $(p^j,1^{n-pj})$ appear in Wronskian Appell polynomials. A natural question to ask is whether Wronskians of other polynomials give information about the characters evaluated in the remaining cycle types.

%%% Local Variables: 
%%% mode: latex
%%% TeX-master: "../main"
%%% End: 

\begin{appendices}
\section{Appendix}\label{sec:appendix}

\subsection{Combinatorial identity}
The following identity is used in the proof of Property~\ref{prop:subleadingcoeff}.

\begin{lemma}\label{lem:identity}
Let $x_1,x_2,\dots,x_n$ be $n$ pairwise different complex numbers, then 
\begin{equation}\label{eq:identity}
	\sum_{j=1}^{n} x_j (x_j-1)  \prod_{i \neq j}\frac{x_j-x_i-2}{x_j-x_i}
		= \sum_{j=1}^{n}(x_j-n+j)(x_j-n-j+1).
\end{equation}
\end{lemma}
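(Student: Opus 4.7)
The plan is to prove the identity by recognizing both sides as symmetric polynomials in $x_1, \ldots, x_n$ of total degree two, then matching their expansions in terms of the power sums $p_1 := \sum_i x_i$ and $p_2 := \sum_i x_i^2$. Set $P(t) := \prod_{i=1}^n(t-x_i)$; the key observation is that $\prod_{i=1}^n(t-x_i-2) = P(t-2)$, and isolating the factor $x_j - x_j - 2 = -2$ in the numerator gives
\[\prod_{i \neq j}\frac{x_j - x_i - 2}{x_j - x_i} = -\frac{P(x_j - 2)}{2\,P'(x_j)}.\]
Therefore the left-hand side of \eqref{eq:identity} equals $-\tfrac{1}{2}\sum_{j=1}^{n} x_j(x_j-1)P(x_j-2)/P'(x_j)$, which is $-\tfrac{1}{2}$ times the sum of the residues of the rational function $F(t) := t(t-1)\,P(t-2)/P(t)$ at its finite (simple) poles $x_1, \ldots, x_n$.

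The next step is to compute that sum of residues. From the partial fraction decomposition $F(t) = G(t) + \sum_j A_j/(t-x_j)$, where $G$ is a polynomial of degree two, the sum $\sum_j A_j$ is exactly the coefficient of $1/t$ in the Laurent expansion of $F$ at $t = \infty$. Since $P(t)$ and $P(t-2)$ are both monic of degree $n$, the ratio
\[\frac{P(t-2)}{P(t)} = \prod_{i=1}^n\left(1 - \frac{2}{t - x_i}\right)\]
has an expansion $1 + c_1/t + c_2/t^2 + c_3/t^3 + O(t^{-4})$ at infinity. Expanding the product as a sum over subsets $S \subseteq \{1, \ldots, n\}$ and using $(t-x_i)^{-1} = \sum_{k \geq 0} x_i^k/t^{k+1}$, one computes $c_1$, $c_2$, $c_3$ explicitly in terms of $n$, $p_1$ and $p_2$. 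Multiplying by $t(t-1) = t^2 - t$ then shows the coefficient of $1/t$ in $F(t)$ is $c_3 - c_2$, yielding a closed-form value for the left-hand side.

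For the right-hand side, I would expand $(x_j-n+j)(x_j-n-j+1) = x_j^2 - (2n-1)x_j + n(n-1) - j(j-1)$ and sum over $j$, using the standard evaluation $\sum_{j=1}^n j(j-1) = \tfrac{1}{3}n(n-1)(n+1)$ to obtain a closed form in $p_1$ and $p_2$. Matching this expression against the one obtained for the left-hand side (the verification reduces to the arithmetic identity $\tfrac{2}{3}n(n-1)(n-2) + n(n-1) = \tfrac{1}{3}n(n-1)(2n-1)$) finishes the proof. The main obstacle is the careful combinatorial bookkeeping to compute $c_3$: three separate contributions must be tracked (from $1$-, $2$-, and $3$-element subsets in the expansion of the product), and these must be collected correctly in terms of $p_1$, $p_2$, and a purely $n$-dependent piece; once this is done, the rest of the argument is routine symbolic simplification.
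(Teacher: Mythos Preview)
Your proof is correct and takes a genuinely different route from the paper. The paper proceeds by induction on $n$: it isolates the last term $x_n(x_n-1)\prod_{i<n}\frac{x_n-x_i-2}{x_n-x_i}$, takes its partial fraction decomposition in the variable $x_n$, and shows that the resulting pieces combine with the remaining sum to reduce to the identity for $n-1$ variables. Your argument instead is global and closed-form: you rewrite the left-hand side as $-\tfrac12$ times the sum of residues of $F(t)=t(t-1)P(t-2)/P(t)$, read that residue sum off as the $t^{-1}$-coefficient of the Laurent expansion of $F$ at infinity, and compute that coefficient directly from the product expansion of $P(t-2)/P(t)$. This avoids induction entirely and makes the appearance of $p_1$, $p_2$ and the $n$-dependent constant transparent. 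The paper's approach is more elementary in that it needs no residue language, but yours is shorter and explains structurally why both sides reduce to the same linear combination of $p_2$, $p_1$, and a pure constant. One minor remark: your opening sentence frames the plan as ``recognizing both sides as symmetric polynomials''; for the right-hand side this is not immediate from the given form (each $x_j$ is paired with its index $j$), but your actual computation --- expanding $(x_j-n+j)(x_j-n-j+1)=x_j^2-(2n-1)x_j+n(n-1)-j(j-1)$ and summing --- makes the symmetry evident a posteriori, so no gap arises.
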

\begin{proof}
As a preliminary step we find that expanding the right-hand side of~\eqref{eq:identity} gives
\begin{equation}\label{eq:NumbersIdentity3Proof1}
	\sum_{j=1}^{n}(x_j-n+j)(x_j-n-j+1)
		= \sum_{j=1}^{n}\left(x_j^2+(1-2n)x_j\right) + \frac{2}{3}n^3-n^2+\frac{1}{3}n.
\end{equation}
We approach by induction on $n$ and show that the left-hand side of~\eqref{eq:identity} equals the right-hand side of~\eqref{eq:NumbersIdentity3Proof1}. For $n=1$ the equality is trivial and so we take $n>1$. We claim that the left-hand side of~\eqref{eq:identity} equals
\begin{equation}\label{eq:NumbersIdentity3Proof2}
	x_n^2+(1-2n)x_n + 2 (n-1)^2-2\sum_{i=1}^{n-1}x_i
		+\sum_{j=1}^{n-1} x_j (x_j-1)  \prod_{\substack{i=1 \\ i \neq j}}^{n-1}\frac{x_j-x_i-2}{x_j-x_i}		
\end{equation}
such that the required result follows if we apply the induction hypothesis to the last term of~\eqref{eq:NumbersIdentity3Proof2}. So we only need to prove this claim.
	
Extracting the last term of the sum in the left-hand side of~\eqref{eq:identity} gives
\begin{equation}\label{eq:NumbersIdentity3Proof3}
	\sum_{j=1}^{n-1}x_j(x_j-1) \prod_{\substack{i=1 \\ i\neq j}}^{n}\frac{x_j-x_i-2}{x_j-x_i}
	+ x_n(x_n-1) \prod_{i=1}^{n-1}\frac{x_n-x_i-2}{x_n-x_i}.
\end{equation}
We now consider $x_n$ as a variable and derive the partial fractal decomposition of the last term in~\eqref{eq:NumbersIdentity3Proof3} in the form 
\begin{equation}\label{eq:parfrac}
	x_n(x_n-1) \prod_{i=1}^{n-1}\frac{x_n-x_i-2}{x_n-x_i}
		= B x_n^2 + C x_n + D + \sum_{j=1}^{n-1} \frac{A_j}{x_n-x_j}
\end{equation}
for some constants $B,C$ and $D$, where
\begin{equation}\label{eq:Aj}
	A_j 
		= x_j(x_j-1) \frac{\prod\limits_{i=1}^{n-1}x_j-x_i-2}{\prod\limits_{\substack{i=1 \\ i\neq j}}^{n-1}x_j-x_i}
		= -2 x_j(x_j-1) \prod_{\substack{i=1 \\ i\neq j}}^{n-1}\frac{x_j-x_i-2}{x_j-x_i}
\end{equation}
for $j=1,2,\dots,n-1$. Adding the term $i=n$ to the last product of~\eqref{eq:Aj} and using the equality
\begin{equation*}
	\frac{-2}{x_j-x_n-2}= 1 - \frac{x_j-x_n}{x_j-x_n-2}
\end{equation*}
yields
\begin{equation*}
	A_j 
		=(x_j-x_n) x_j(x_j-1) \left(\prod_{\substack{i=1 \\ i\neq j}}^{n}\frac{x_j-x_i-2}{x_j-x_i} - \prod_{\substack{i=1 \\ i\neq j}}^{n-1}\frac{x_j-x_i-2}{x_j-x_i}\right)
\end{equation*}
for $ j=1,2,\dots,n-1$. To derive the constants $B,C$ and $D$ in~\eqref{eq:parfrac}, we expand the product
\begin{equation*}
	\prod_{i=1}^{n-1}\frac{x_n-x_i-2}{x_n-x_i}
		= 1-2\sum_{i=1}^{n-1}\frac{1}{x_n-x_i} +4  \sum_{1\leq i < j \leq n-1} \frac{1}{(x_n-x_i)(x_n-x_j)} + O\left(x_n^{-3}\right)
\end{equation*}
such that after some elementary calculations we find that
\begin{equation*}
	B=1,
	\qquad 
	C=1-2n,
	\qquad
	D= 2 (n-1)^2-2\sum_{i=1}^{n-1}x_i.
\end{equation*}
Therefore we  have that the partial fraction decomposition is given by
\begin{multline}\label{eq:NumbersIdentity3Proof4}
	x_n(x_n-1) \prod_{i=1}^{n-1}\frac{x_n-x_i-2}{x_n-x_i}
	= x_n^2 + \left(1-2n\right)x_n
	+ 2 (n-1)^2-2\sum_{i=1}^{n-1}x_i
	\\
	+\sum_{j=1}^{n-1}x_j(x_j-1) \prod_{\substack{i=1 \\ i\neq j}}^{n-1}\frac{x_j-x_i-2}{x_j-x_i}
	- \sum_{j=1}^{n-1}x_j(x_j-1) \prod_{\substack{i=1 \\ i\neq j}}^{n}\frac{x_j-x_i-2}{x_j-x_i}. 
\end{multline}
Hence the claim follows from plugging~\eqref{eq:NumbersIdentity3Proof4} into~\eqref{eq:NumbersIdentity3Proof3} to obtain~\eqref{eq:NumbersIdentity3Proof2}. 
\end{proof}

\subsection{Leading coefficients of Wronskian Appell polynomials}\label{sec:leadingcoefficientsWAP}

In this section we extend the result of Proposition~\ref{prop:subleadingcoeff} to Wronskian Appell polynomials \cite{Bonneux_Hamaker_Stembridge_Stevens}. An Appell sequence $(A_n)_{n=0}^\infty$ is a polynomial sequence satisfying $A_n'=nA_n$ for all $n\geq 1$ and~$A_0\equiv 1$. From this definition it immediately follows that there is a set of constants $(z_j)_{j=0}^\infty$ with $z_0=1$ such that
\begin{equation*}
	A_n(x)
		=\sum_{j=0}^n \binom{n}{j} z_j \,x^{n-j}
\end{equation*}
for all $n\geq0$. In particular, $A_n(0)=z_n$. Wronskian Appell polynomials are then defined by
\begin{equation*}
	A_{\lambda}(x)
		:= \frac{\Wr[A_{n_1},A_{n_2},\dots,A_{n_{\ell(\lambda)}}]}{\Delta(n_{\lambda})} 
\end{equation*}
for any partition $\lambda$. This is in line with the definition of Wronskian Hermite polynomials~\eqref{eq:WHP}. In the Hermite setting we have $z_1=0$ and $z_2=-1$. We give an explicit expression for the first three coefficients of Wronskian Appell polynomials in terms of the above constants $z_1$ and~$z_2$, which agree with the results for Wronskian Hermite polynomials in Section~\ref{sec:coefficients} when we set $z_1=0$ and $z_2=-1$. 

\begin{proposition}
For any partition $\lambda\vdash n$ we have
\begin{equation}
\label{eq:firstcoefficientsWAP}
	A_{\lambda}(x)
		= x^{n} + \binom{n}{1} z_1 x^{n-1} + \left(c(\lambda) (z_2-z_1^2) + \binom{n}{2} z_1^2\right) x^{n-2} + O(x^{n-3})
\end{equation}
where the content 
$c(\lambda)$ is defined in~\eqref{eq:contents}.
\end{proposition}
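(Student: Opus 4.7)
I would reduce to the case $z_1 = 0$ by a translation, and then import Proposition~\ref{prop:subleadingcoeff} by a rescaling.

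For the reduction, since $A_n' = n A_{n-1}$, the shifted sequence $B_n(x) := A_n(x - z_1)$ is again an Appell sequence; its constants $\widetilde z_n := B_n(0) = A_n(-z_1)$ satisfy $\widetilde z_0 = 1$, $\widetilde z_1 = 0$, and $\widetilde z_2 = z_2 - z_1^2$. Because the Wronskian commutes with translations of the argument, $B_\lambda(x) = A_\lambda(x - z_1)$. Granting the reduced statement $B_\lambda(y) = y^n + c(\lambda)(z_2 - z_1^2)\, y^{n-2} + O(y^{n-3})$, substituting $y = x - z_1$ and expanding binomially recovers \eqref{eq:firstcoefficientsWAP}: the $x^{n-1}$ coefficient becomes $n z_1$ and the $x^{n-2}$ coefficient becomes $c(\lambda)(z_2 - z_1^2) + \binom{n}{2} z_1^2$, as desired.

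For the reduced case ($z_1 = 0$), I would first observe that the coefficient of $x^{|\lambda|-j}$ in $A_\lambda$ depends polynomially only on $z_0, z_1, \ldots, z_j$. Indeed, expanding the Wronskian determinant as a signed sum of products of derivatives, one sees that the coefficient of $x^m$ in $A_n^{(\ell)}$ is a constant multiple of the single parameter $z_{n-\ell-m}$; so every monomial in $z_\bullet$ appearing in the coefficient of $x^{|\lambda|-j}$ has the form $\prod_i z_{k_i}$ with $\sum_i k_i = j$. In particular, for $j \le 2$ and $z_1 = 0$, only $z_2$ enters, and it enters linearly. One may therefore evaluate on any convenient Appell sequence with the prescribed $(z_0, z_1, z_2) = (1, 0, z_2)$. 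Choosing $c$ with $c^2 = -z_2$ and setting $\widetilde A_n(x) := c^n \He_n(x/c)$ (which is Appell with $\widetilde z_2 = -c^2 = z_2$), the standard scaling behaviour of Wronskians gives $\widetilde A_\lambda(x) = c^{|\lambda|} \He_\lambda(x/c)$, so Proposition~\ref{prop:subleadingcoeff} yields $\widetilde A_\lambda(x) = x^n + z_2\, c(\lambda)\, x^{n-2} + O(x^{n-3})$. By the already established linear polynomial dependence on $z_2$, this identity extends to all complex $z_2$.

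The main obstacle, and really the only nontrivial input, is verifying that the top three coefficients of $A_\lambda$ are determined by $z_0, z_1, z_2$ alone; once that is in hand, the translation plus rescaling framework reduces the entire claim to the known Hermite identity. An alternative that avoids the ad hoc rescaling would be to invoke the power-sum expansion $A_\lambda = H(\lambda) \sum_{\mu \vdash n} \chi^\lambda_\mu \, p_\mu / z_\mu$ from~\cite{Bonneux_Hamaker_Stembridge_Stevens}, with $p_1 = x$ and $p_2 = z_2$ (valid when $z_1 = 0$); only $\mu = (2, 1^{n-2})$ contributes to the $x^{n-2}$ coefficient, and combining the character identity $a(\lambda, 1) = \binom{n}{2}^{-1} F_\lambda c(\lambda)$ from the remark following Proposition~\ref{prop:subleadingcoeff} with $H(\lambda) F_\lambda = n!$ produces $c(\lambda) z_2$ directly.
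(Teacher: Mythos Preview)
Your approach is correct and takes a genuinely different route from the paper's. The paper argues by induction on $|\lambda|$, using the derivative identity $F_\lambda A_\lambda' = |\lambda| \sum_{\mu \lessdot \lambda} F_\mu A_\mu$ from \cite{Bonneux_Hamaker_Stembridge_Stevens}; the inductive step for the $x^{n-2}$ coefficient then requires the content identity $\sum_{\mu \lessdot \lambda} F_\mu\, c(\mu) = \tfrac{n-2}{n} F_\lambda\, c(\lambda)$, which the paper proves separately in Corollary~\ref{cor:FlambdaClambda} by passing through Lemma~\ref{lem:combinatorialidentity} and Proposition~\ref{prop:subleadingcoeff}. So both routes ultimately rest on the Hermite computation, but yours is more direct: the translation kills $z_1$, the weight-homogeneity observation (that the coefficient of $x^{|\lambda|-j}$ in $A_\lambda$ is a polynomial of homogeneous $z$-weight $j$) isolates the linear dependence on $z_2$, and the rescaling reduces everything immediately to Proposition~\ref{prop:subleadingcoeff} without any induction or auxiliary combinatorial identity. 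What the paper's longer route buys is precisely that identity on contents as a by-product. One small slip in your exposition: to recover $A_\lambda(x)$ from $B_\lambda$ you should substitute $y = x + z_1$ rather than $y = x - z_1$, since $A_\lambda(x) = B_\lambda(x + z_1)$; the coefficients you write down are nonetheless correct.
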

\begin{proof}
For any partition $\lambda\vdash n$ we denote the subleading and subsubleading coefficients of $A_\lambda$ by $a_{\lambda,1}$ and $a_{\lambda,2}$, respectively. In other words, we have
\begin{equation}
\label{eq:firstexpWAP}
A_{\lambda}(x):= x^{n} + a_{\lambda,1} x^{n-1} + a_{\lambda,2} x^{n-2} + O(x^{n-3})
\end{equation}
and we recall that $A_{\lambda}$ is a monic polynomial by definition. Trivially, the derivative is 
\begin{equation}
\label{eq:firstexpWAPprime}
A'_{\lambda}(x)= n x^{n-1} + (n-1) a_{\lambda,1} x^{n-2} + (n-2) a_{\lambda,2} x^{n-3} + O(x^{n-4}).
\end{equation}
We now prove \eqref{eq:firstcoefficientsWAP} by induction on $n:=|\lambda|$. A simple calculation from the definition gives 
\begin{align*}
	& A_\emptyset(x)=1 &&A_{(1)}(x)=x+z_1 \\
	& A_{(2)}(x)=x^2+2z_1x+z_2 && A_{(1,1)}(x)=x^2+2z_1x+2z_1^2-z_2
\end{align*}
and so \eqref{eq:firstcoefficientsWAP} holds for $n\leq2$. Therefore take $n>2$, assume that the identity is true for all partitions $\mu$ such that $|\mu|<n$, and let $\lambda \vdash n$. The derivative of the Wronskian Appell polynomial is given by
\begin{equation}
\label{eq:aderiv}
	F_{\lambda} A'_{\lambda}(x)
		= n \sum_{\mu \lessdot \lambda} F_{\mu} A_{\mu}(x)
\end{equation}
for all $\lambda$, see~\cite[Theorem~5.1]{Bonneux_Hamaker_Stembridge_Stevens}. We plug \eqref{eq:firstexpWAPprime} into the left-hand side of \eqref{eq:aderiv} and \eqref{eq:firstexpWAP} for every~$\mu$ on the right-hand side of \eqref{eq:aderiv}, then match the coefficients of the leading terms to obtain
\begin{equation*}
	F_{\lambda} (n-1) \,a_{\lambda,1}
		= n \sum_{\mu \lessdot \lambda} F_{\mu} \, a_{\mu,1},
	\qquad \qquad
	F_{\lambda} (n-2) \,a_{\lambda,2}
		= n \sum_{\mu \lessdot \lambda} F_{\mu} \, a_{\mu,2}.
\end{equation*}
We now apply the induction hypothesis, which says that for every $\mu\lessdot \lambda$ we have
\begin{equation*}
	a_{\mu,1}=(n-1) z_1
	\qquad \qquad
	a_{\mu,2}= c(\mu)(z_2-z_1^2) + \binom{n-1}{2}z_1^2
\end{equation*}
to obtain 
\begin{equation*}
	F_{\lambda} a_{\lambda,1}
		= n z_1 \sum_{\mu \lessdot \lambda} F_{\mu},
	\qquad \qquad
	F_{\lambda} a_{\lambda,2}
		= \frac{n}{n-2} (z_2-z_1^2) \sum_{\mu \lessdot \lambda} F_{\mu} \, c(\mu) + \binom{n}{2}z_1^2 \sum_{\mu \lessdot \lambda} F_{\mu}.
\end{equation*}
The result follows immediately from the combinatorial identities
\begin{equation}
\label{eq:combinatorialidentities}
	\sum_{\mu \lessdot \lambda} F_{\mu} = F_{\lambda},
	\qquad \qquad
	\sum_{\mu \lessdot \lambda} F_{\mu} \, c(\mu) = \frac{n-2}{n} \, F_{\lambda} \, c(\lambda).
\end{equation}
The first identity is trivial while the second one is proven in Corollary~\ref{cor:FlambdaClambda}.
\end{proof}

All that is now left to prove is the second identity in \eqref{eq:combinatorialidentities}. This identity follows from the following result.

\begin{lemma}\label{lem:combinatorialidentity}
For any partition $\lambda$ we have
\begin{equation}\label{eq:combinatorialidentity}
	F_{\lambda} \, c(\lambda)
		= \binom{|\lambda|}{2} (F_{\lambda/(2)}-F_{\lambda/(1,1)})
\end{equation}
where $c(\lambda)$ is defined in~\eqref{eq:contents}.
\end{lemma}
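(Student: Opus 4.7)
The plan is to identify both sides of \eqref{eq:combinatorialidentity} with $\binom{n}{2}\,a(\lambda,1)$, where $n=|\lambda|$ and $a(\lambda,1)$ is the character value on the conjugacy class of cycle type $(2,1^{n-2})$ as defined in Theorem~\ref{thm:WHcoeff}. This routes the lemma through the character theory of the symmetric group, in the same spirit as Theorem~\ref{thm:WHcoeff} and the remark following Proposition~\ref{prop:subleadingcoeff}.

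First I would invoke the classical identity
\begin{equation*}
    F_\lambda \, c(\lambda) = \binom{n}{2}\,a(\lambda,1),
\end{equation*}
which is exactly the identity used in the remark following Proposition~\ref{prop:subleadingcoeff} (obtained there by combining \cite[I.7,~Ex.~7]{MacDonald} with \cite[I.1,~Ex.~3]{MacDonald}). Second, I would evaluate $a(\lambda,1)$ by applying the Murnaghan--Nakayama rule with the unique size-$2$ cycle of $(2,1^{n-2})$ placed \emph{first}: that is, growing the border strip of size $2$ outward from the empty partition rather than removing it from $\lambda$. The only two options for this initial strip are the horizontal domino $(2)$, with height $0$ and sign $+1$, and the vertical domino $(1,1)$, with height $1$ and sign $-1$. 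Once the strip $\mu \in \{(2),(1,1)\}$ is placed, the remaining $n-2$ boxes of $\lambda$ are filled by an arbitrary standard Young tableau of skew shape $\lambda/\mu$, contributing $F_{\lambda/\mu}$ choices. This gives
\begin{equation*}
    a(\lambda,1) = F_{\lambda/(2)} - F_{\lambda/(1,1)},
\end{equation*}
and multiplying by $\binom{n}{2}$ yields \eqref{eq:combinatorialidentity}.

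The main subtlety is purely conceptual: one must be comfortable applying the Murnaghan--Nakayama rule with the size-$2$ strip placed at the \emph{inner} end of the border-strip tableau rather than at the outer end. This is legitimate because the character depends only on the cycle type, and the Murnaghan--Nakayama sum may be taken over border-strip tableaux built in any fixed order of the cycle parts; fixing the size-$2$ strip first reduces the problem to a signed count of standard Young tableaux of the two skew shapes $\lambda/(2)$ and $\lambda/(1,1)$. A fully self-contained combinatorial alternative is induction on $|\lambda|$, using $c(\lambda) = c(\mu)+c(\lambda/\mu)$ for $\mu \lessdot \lambda$ together with the induction hypothesis $F_\mu c(\mu) = \binom{n-1}{2}(F_{\mu/(2)}-F_{\mu/(1,1)})$; after the telescoping $\sum_{\mu \lessdot \lambda} F_{\mu/\tau} = F_{\lambda/\tau}$ for $\tau \in \{(2),(1,1)\}$, this reduces to the auxiliary identity $\sum_T c_n(T) = (n-1)(F_{\lambda/(2)}-F_{\lambda/(1,1)})$ summed over SYTs of $\lambda$, which however does not appear to be more elementary than the character-theoretic route.
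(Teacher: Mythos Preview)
Your argument is correct, but it goes by a different route than the paper. The paper's proof reads the coefficient of $x^{|\lambda|-2}$ in $F_\lambda \He_\lambda$ off the general Wronskian Appell expansion $F_\lambda A_\lambda(x)=\sum_j \binom{|\lambda|}{j}\sum_{\mu\vdash j} F_\mu F_{\lambda/\mu} z_\mu\, x^{|\lambda|-j}$ from \cite{Bonneux_Hamaker_Stembridge_Stevens}, specialised to $z_{(2)}=-1$, $z_{(1,1)}=1$; this gives $\binom{|\lambda|}{2}(-F_{\lambda/(2)}+F_{\lambda/(1,1)})$ for that coefficient, and the identification with $-F_\lambda c(\lambda)$ then comes from the earlier Proposition~\ref{prop:subleadingcoeff}. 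So the paper deduces the lemma from the polynomial machinery it has already built. You instead equate both sides of \eqref{eq:combinatorialidentity} directly with $\binom{n}{2}\chi^\lambda(2,1^{n-2})$: the left-hand side via the classical content formula, the right-hand side by running Murnaghan--Nakayama with the size-$2$ strip placed at the inner end. Your approach is shorter and self-contained within character theory, needing neither the Appell expansion nor Proposition~\ref{prop:subleadingcoeff}; the paper's approach has the virtue of exhibiting the identity as a structural consequence of the Wronskian framework, which is what makes it natural in the context of Appendix~\ref{sec:leadingcoefficientsWAP}.
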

\begin{proof}
For any partition $\lambda$, the associated Wronskian Appell polynomial is given by
\begin{equation*}
	F_\lambda A_{\lambda}(x)	
		= \sum_{j=0}^{|\lambda|} \binom{|\lambda|}{j} \sum_{\mu\vdash j} F_{\mu} \, F_{\lambda/\mu} \, z_{\mu} \, x^{|\lambda|-j}
\end{equation*}
where $z_\mu:=A_{\mu}(0)$, see \cite[Section 5.1]{Bonneux_Hamaker_Stembridge_Stevens}. For Hermite polynomials we have $z_1=0$ and $z_2=-1$. If we therefore specify to Wronskian Hermite polynomials, the coefficient corresponding to $x^{|\lambda|-2}$ equals
\begin{equation*}
	\binom{|\lambda|}{2} \left( -F_{\lambda/(2)} +  F_{\lambda/(1,1)} \right)
\end{equation*}
because $z_{(2)}=-1$ and $z_{(1,1)}=1$. In terms of the remainder polynomial, this coefficient is by definition equal to~$F_\lambda r_{\lambda,1}$. However, by Proposition~\ref{prop:subleadingcoeff} we know that $r_{\lambda,1}=-c(\lambda)$. Combining both expressions yields the result.
\end{proof}

\begin{corollary}\label{cor:FlambdaClambda}
For any partition $\lambda$ we have
\begin{equation*}
	|\lambda| \sum_{\mu \lessdot \lambda} F_{\mu} \, c(\mu) 
		= (|\lambda|-2) \, F_{\lambda} \, c(\lambda)
\end{equation*}
where $c(\lambda)$ is defined in~\eqref{eq:contents}.
\end{corollary}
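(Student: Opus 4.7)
The plan is to derive Corollary~\ref{cor:FlambdaClambda} directly from Lemma~\ref{lem:combinatorialidentity} by summing both sides over predecessors in the Young lattice, using the elementary identity
\[
F_{\lambda/\nu} = \sum_{\mu \lessdot \lambda} F_{\mu/\nu}
\]
which merely reflects the fact that any saturated chain from $\nu$ to $\lambda$ passes through a unique cover $\mu \lessdot \lambda$ at its penultimate step. Setting $n = |\lambda|$, I will apply this identity with $\nu = (2)$ and with $\nu = (1,1)$.

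First I would apply Lemma~\ref{lem:combinatorialidentity} to each $\mu \lessdot \lambda$, where $|\mu| = n-1$, obtaining
\[
F_\mu \, c(\mu) = \binom{n-1}{2} \bigl( F_{\mu/(2)} - F_{\mu/(1,1)} \bigr).
\]
Summing over $\mu \lessdot \lambda$ and invoking the chain-decomposition identity above yields
\[
\sum_{\mu \lessdot \lambda} F_\mu \, c(\mu) = \binom{n-1}{2} \bigl( F_{\lambda/(2)} - F_{\lambda/(1,1)} \bigr).
\]
On the other hand, Lemma~\ref{lem:combinatorialidentity} applied to $\lambda$ itself gives $F_\lambda \, c(\lambda) = \binom{n}{2} \bigl( F_{\lambda/(2)} - F_{\lambda/(1,1)} \bigr)$. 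Taking the ratio of the two expressions eliminates the difference $F_{\lambda/(2)} - F_{\lambda/(1,1)}$ and produces
\[
\sum_{\mu \lessdot \lambda} F_\mu \, c(\mu) = \frac{\binom{n-1}{2}}{\binom{n}{2}} \, F_\lambda \, c(\lambda) = \frac{n-2}{n} \, F_\lambda \, c(\lambda),
\]
which rearranges to the desired identity after multiplying by $n = |\lambda|$.

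There is essentially no obstacle here: the corollary is a two-line consequence of Lemma~\ref{lem:combinatorialidentity} once one recognizes that the skew-counting quantity $F_{\lambda/(2)} - F_{\lambda/(1,1)}$ behaves additively under the cover relation. A minor bookkeeping point is that the formula $F_\mu c(\mu) = \binom{|\mu|}{2}(F_{\mu/(2)} - F_{\mu/(1,1)})$ is trivially valid when $|\mu| < 2$ (both sides vanish), so no case distinction is needed for small $\lambda$.
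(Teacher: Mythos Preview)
Your proof is correct and essentially identical to the paper's own argument: apply Lemma~\ref{lem:combinatorialidentity} to each $\mu\lessdot\lambda$, use $\sum_{\mu\lessdot\lambda}F_{\mu/\nu}=F_{\lambda/\nu}$, and then apply the lemma once more to $\lambda$ to convert $F_{\lambda/(2)}-F_{\lambda/(1,1)}$ back into $F_\lambda\,c(\lambda)$. The only cosmetic difference is that the paper substitutes rather than ``takes the ratio'' (which avoids any worry about division by zero when $c(\lambda)=0$), but the underlying computation is the same.
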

\begin{proof}
Applying~\eqref{eq:combinatorialidentity} to all partitions $\mu \lessdot \lambda$ yields
\begin{equation*}
	|\lambda| \sum_{\mu \lessdot \lambda} F_{\mu} \, c(\mu) 
		= |\lambda| \binom{|\lambda|-1}{2} \sum_{\mu \lessdot \lambda} (F_{\mu/(2)}-F_{\mu/(1,1)}).
\end{equation*}
Since trivially $\sum_{\mu \lessdot \lambda} F_{\mu/\nu} = F_{\lambda /\nu}$ for any $\nu$, we therefore obtain
\begin{equation*}
	|\lambda|\sum_{\mu \lessdot \lambda} F_{\mu} \, c(\mu)
		=|\lambda| \binom{|\lambda|-1}{2} (F_{\lambda/(2)}-F_{\lambda/(1,1)}) 
		=(|\lambda|-2) \, F_{\lambda} \, c(\lambda)
\end{equation*}
where to obtain the last equality we again used~\eqref{eq:combinatorialidentity}.
\end{proof}

%%% Local Variables: 
%%% mode: latex
%%% TeX-master: "../main"
%%% End: 

\end{appendices}

\section*{Acknowledgements}
We would like to thank Roger Behrend, Chris Bowman, Peter Clarkson, Matthew Fayers, Arno Kuijlaars, Ana Loureiro, Rowena Paget, Walter Van Assche and Mark Wildon  for useful conversations. NB thanks the University of Kent, and TCD thanks Australia National University and its Mathematical Sciences Research Visitor Program and KU Leuven for hospitality during this project. The project was partially supported by London Mathematical Society  Research in Pairs grant 41848 and by the London Mathematical Society Joint Research group Orthogonal Polynomials, Special Functions and Operator Theory and Applications. NB and MS are supported in part by the long term structural funding-Methusalem grant of the Flemish Government, and by EOS project 30889451 of the Flemish Science Foundation (FWO). MS is also supported by the Belgian Interuniversity Attraction Pole P07/18, and by FWO research grant G.0864.16.


\begin{thebibliography}{99}
\footnotesize\itemsep=0pt

\bibitem{Adler}
Adler V.\'E.,
A modification of Crum's method,
\href{https://doi.org/10.1007/BF01035458}{\emph{Theoretical and Mathematical Physics}} \textbf{101} (1994), 1381--1386.

\bibitem{Airault}
Airault H.,
Rational solutions of Painlev\'e equations,
\href{https://doi.org/10.1002/sapm197961131}{\emph{Studies in Applied Mathematics}} \textbf{61} 1979, 31--53.

\bibitem{Bassom_Clarkson_Hicks}
Bassom A.P., Clarkson P.A., Hicks A.C.,
B\"acklund transformations and solution hierarchies for the fourth Painlev\'e equation,
\href{https://doi.org/10.1002/sapm19959511}{\emph{Studies in Applied Mathematics}} \textbf{95} (1995), 1--71.

\bibitem{BenCheikh_Zaghouani}
Ben Cheikh Y., Zaghouani A.,
$d$-Orthogonality via generating functions, 
\href{https://doi.org/10.1016/j.cam.2005.01.051}{\emph{Journal of Computational and Applied Mathematics}} \textbf{199} (2007), 2-22.

\bibitem{Bessenrodt}
Bessenrodt C., Gramain J., Olsson J.B.,
Generalized hook lengths in symbols and partitions
\href{https://www.doi.org/10.1007/s10801-011-0338-9}{\emph{Journal of Algebraic Combinatorics}} \textbf{36} (2012), 309-332,
\href{https://arxiv.org/abs/1101.5067}{arXiv:1101.5067}.

\bibitem{Bonneux}
Bonneux N., 
Exceptional Jacobi polynomials, 
\href{https://doi.org/10.1016/j.jat.2018.11.002}{\emph{Journal of Approximation Theory}} \textbf{239} (2019), 72-112, 
\href{https://arxiv.org/abs/1804.01323}{arXiv:1804.01323}.

\bibitem{Bonneux_Hamaker_Stembridge_Stevens}
Bonneux N., Hamaker Z., Stembridge J., Stevens M.,
Wronskian Appell Polynomials and Symmetric Functions,
\href{https://doi.org/10.1016/j.aam.2019.101932}{\emph{Advances in Applied Mathematics}} \textbf{111} (2019), 101932, 
\href{https://arxiv.org/abs/1812.01864}{arXiv:1812.01864}.

\bibitem{Bonneux_Kuijlaars} 
Bonneux N., Kuijlaars A.B.J.,
Exceptional Laguerre polynomials,
\href{https://doi.org/10.1111/sapm.12204}{\emph{Studies in Applied Mathematics}} \textbf{141} (2018), 547--595,
\href{https://arxiv.org/abs/1708.03106}{1708.03106}.

\bibitem{Bonneux_Stevens} 
Bonneux N., Stevens M.,
Recurrence relations for Wronskian Hermite polynomials,
\href{https://doi.org/10.3842/SIGMA.2018.048}{\emph{Symmetry Integrability and Geometry: Methods and Applications}} \textbf{14} (2018), 048, 29 pages, 
\href{https://arxiv.org/abs/1801.07980}{arXiv:1801.07980}.

\bibitem{Buckingham}
Buckingham R.,
Large-degree asymptotics of rational Painlev\'e-IV functions associated to generalized Hermite polynomials,
\href{https://doi.org/10.1093/imrn/rny172}{\emph{International Mathematics Research Notices}} rny172 (2018), 
\href{https://arxiv.org/abs/1706.09005}{arXiv:1706.09005}.

\bibitem{Carr_Dorey_Dunning}
Carr N., Dorey P.E., Dunning C.,  
in preparation (2020).

\bibitem{Clarkson-zeros}
Clarkson P.A.,
On rational solutions of the fourth Painlev\'e equation and its Hamiltonian,
In \href{https://bookstore.ams.org/crmp-39}{\emph{Group Theory and Numerical Analysis}}, number 39, CRM Proceedings \& Lecture Notes, pages 103–118, American Mathematical Society, 2005.

\bibitem{Clarkson-survey}
Clarkson P.A., 
Special polynomials associated with rational solutions of the Painlev\'e equations and applications to soliton equations,
\href{https://doi.org/10.1007/BF03321618}{\emph{Computational Methods and Function Theory}} \textbf{6} (2006), 329–-401.

\bibitem{Clarkson-PIV}
Clarkson P.A., 
The fourth Painlev\'e equation and associated special polynomials, 
\href{https://doi.org/10.1063/1.1603958}{\emph{Journal of Mathematical Physics}} \textbf{44} (2003), 5350--5374.

\bibitem{Clarkson-Boussinesq}
Clarkson P.A.,
Rational solutions of the Boussinesq equation,
\href{https://doi.org/10.1142/S0219530508001250}{\emph{Analysis and Applications}} \textbf{6} (2008), 349--369.

\bibitem{Clarkson-Dowie}
Clarkson P.A., Dowie E.,
Rational solutions of the Boussinesq equation and applications to rogue waves,
\href{https://doi.org/10.1093/imatrm/tnx003}{\emph{Transactions of Mathematics and its Applications}} \textbf{1} (2017), 1--26.

\bibitem{Clarkson_GomezUllate_Grandati_Milson}
Clarkson P.A., G\'omez-Ullate D., Grandati Y, and Milson R.,
Rational solutions of higher order Painlev\'e systems I,
preprint, \href{https://arxiv.org/abs/1811.09274}{arXiv:1811.09274}.

\bibitem{Curbera_Duran}
Curbera G.P., Dur\'an A.J., 
Invariance properties of Wronskian type determinants of classical and classical discrete orthogonal polynomials,
\href{https://doi.org/10.1016/j.jmaa.2019.01.078}{\emph{Journal of Mathematical Analysis and Applications}} \textbf{474} (2019), 748--764, 
\href{https://arxiv.org/abs/1612.07530}{arXiv:1612.07530}.

\bibitem{Dehaye}
Dehaye P-O.,
Integrality of hook ratios, 
\href{https://dmtcs.episciences.org/3022}{\emph{Discrete Mathematics \& Theoretical Computer Science}} Proceedings vol. AR FPSAC (2012), 85--98,
\href{https://arxiv.org/abs/1111.5959}{arXiv:1111.5959}.

\bibitem{Duistermaat_Grunbaum}
Duistermaat J.J., Gr\"unbaum F.A.,
Differential equations in the spectral parameter,
\href{https://link.springer.com/article/10.1007/BF01206937}{\emph{Communications in Mathematical Physics}} \textbf{103} (1986), 177--240.

\bibitem{Duran-Hermite}
Dur\'an A.J., 
Exceptional Charlier and Hermite orthogonal polynomials,
\href{https://doi.org/10.1016/j.jat.2014.03.004}{\emph{Journal of Approximation Theory}} \textbf{182} (2014), 29--58, 
\href{https://arxiv.org/abs/1309.1175}{arXiv:1309.1175}.

\bibitem{Duran-Laguerre} 
Dur\'an A.J.,
Exceptional Meixner and Laguerre orthogonal polynomials,
\href{https://doi.org/10.1016/j.jat.2014.05.009}{\emph{Journal of Approximation Theory}} \textbf{184} (2014), 176--208,
\href{https://arxiv.org/abs/1310.4658}{arXiv:1310.4658}.

\bibitem{Duran_Perez}
Dur\'an A.J., P\'erez M.,
Admissibility condition for exceptional Laguerre polynomials,
\href{https://doi.org/10.1016/j.jmaa.2014.11.035}{\emph{Journal of Mathematical Analysis and Applications}} \textbf{424} (2015), 1042--1053,
\href{https://arxiv.org/abs/1409.4901}{arXiv:1409.4901}.

\bibitem{Felder_Hemery_Veselov} 
Felder G., Hemery A.D., Veselov A.P.,
Zeros of Wronskians of Hermite polynomials and Young diagrams,
\href{https://doi.org/10.1016/j.physd.2012.08.008}{\emph{Physica D: Nonlinear Phenomena}} \textbf{241} (2012), 2131--2137, 
\href{https://arxiv.org/abs/1005.2695}{arXiv:1005.2695}.

\bibitem{GarciaFerrero_GomezUllate} 
Garc\'ia-Ferrero M., G\'omez-Ullate D.,
Oscillation theorems for the Wronskian of an arbitrary sequence of eigenfunctions of Schr\"odinger’s equation,
\href{https://doi.org/10.1007/s11005-015-0751-4}{\emph{Letters in Mathematical Physics}} \textbf{105} (2015), 551--573, 
\href{https://arxiv.org/abs/1408.0883}{arXiv:1408.0883}.

\bibitem{GomezUllate_Grandati_Milson-durfee} 
G\'omez-Ullate D., Grandati Y., Milson R.,
Durfee rectangles and pseudo-Wronskian equivalences for Hermite polynomials,
\href{https://doi.org/10.1111/sapm.12225}{\emph{Studies in Applied Mathematics}}, \textbf{141} (2018), 596--625, 
\href{https://arxiv.org/abs/1612.05514}{arXiv:1612.05514}.

\bibitem{GomezUllate_Grandati_Milson-krein}
G\'omez-Ullate D., Grandati Y., Milson R.,
Extended Krein-Adler theorem for the translationally shape invariant potentials, 
\href{https://doi.org/10.1063/1.4871443}{\emph{Journal of Mathematical Physics}} \textbf{55} (2014), 043510,
\href{https://arxiv.org/abs/1309.3756}{arXiv:1309.3756}.

\bibitem{GomezUllate_Grandati_Milson-Hermite}
G\'omez-Ullate D., Grandati Y., Milson R., 
Rational extensions of the quantum harmonic oscillator and exceptional Hermite polynomials,
\href{https://doi.org/10.1088/1751-8113/47/1/015203}{\emph{Journal of Physics A: Mathematical and Theoretical}} \textbf{47} (2014), 015203, 27 pages,
\href{https://arxiv.org/abs/1306.5143}{arXiv:1306.5143}.

\bibitem{GomezUllate_Grandati_Milson-L+J}
G\'omez-Ullate D., Grandati Y., Milson R.,
Shape invariance and equivalence relations for pseudo-Wronskians of Laguerre and Jacobi polynomials,
\href{https://doi.org/10.1088/1751-8121/aace4b}{\emph{Journal of Physics A: Mathematical and Theoretical}} \textbf{51} (2018), 345201, 30 pages,
\href{https://arxiv.org/abs/1802.05460}{arXiv:1802.05460}.

\bibitem{GomezUllate_Kamran_Milson}
G\'omez-Ullate D., Kamran N., Milson R.,
An extended class of orthogonal polynomials defined by a Sturm-Liouville problem,
\href{https://doi.org/10.1016/j.jmaa.2009.05.052}{\emph{Journal of Mathematical Analysis and Applications}} \textbf{359} (2009), 352--367,
\href{https://arxiv.org/abs/0807.3939}{arXiv:0807.3939}.

\bibitem{GomezUllate_Kasman_Kuijlaars_Milson}
G\'omez-Ullate D., Kasman A., Kuijlaars A.B.J., Milson R.,
Recurrence relations for exceptional Hermite polynomials,
\href{https://doi.org/10.1016/j.jat.2015.12.003}{\emph{Journal of Approximation Theory}} \textbf{204} (2016), 1--16,
\href{https://arxiv.org/abs/1506.03651}{arXiv:1506.03651}.

\bibitem{Grandati}
Grandati Y.,
Exceptional orthogonal polynomials and generalized Schur polynomials,
\href{https://doi.org/10.1063/1.4891923}{\emph{Journal of Mathematical Physics}} \textbf{55} (2014), 083509,
\href{https://arxiv.org/abs/1311.4530}{arXiv:1311.4530}.

\bibitem{Haese-Hill_Hallnas_Veselov}
Haese-Hill W.A., Halln\"as M.A., Veselov A.P.,
Complex exceptional orthogonal polynomials and quasi-invariance,
\href{https://doi.org/10.1007/s11005-016-0828-8}{\emph{Letters in Mathematical Physics}} \textbf{106} (2016), 583--606,
\href{https://arxiv.org/abs/1509.07008}{arXiv:1509.07008}.

\bibitem{Kajiwara_Ohta}
Kajiwara K., Ohta Y.,
Determinant structure of the rational solutions for the Painlev\'e IV equation,
\href{https://doi.org/10.1088/0305-4470/31/10/017}{\emph{Journal of Physics A: Mathematical and General}} \textbf{31} (1998), 2431--2446, 
\href{https://arxiv.org/abs/solv-int/9709011}{arXiv:solv-int/9709011}.

\bibitem{Karlin_Szego}
Karlin S., Szeg\"o G.,
On certain determinants whose elements are orthogonal polynomials,
\href{https://doi.org/10.1007/BF02786848}{\emph{Journal d'Analyse Math{\'e}matique}} \textbf{8} (1960), 1--157.

\bibitem{Krein}
Krein M.G.,
On a continuous analogue of a Christoffel formula from the theory of orthogonal polynomials,
\href{http://mi.mathnet.ru/dan21822}{\emph{Doklady Akademii Nauk SSSR}} \textbf{113} (1957), 970--973.

\bibitem{Kuijlaars_Milson}
Kuijlaars A.B.J, Milson R.,
Zeros of Exceptional Hermite polynomials,
\href{https://doi.org/10.1016/j.jat.2015.07.002}{\emph{Journal of Approximation Theory}} \textbf{200} (2015), 28--39,
\href{https://arxiv.org/abs/1412.6364}{arXiv:1412.6364}.

\bibitem{MacDonald}
Macdonald I.G.,
Symmetric Functions and Hall Polynomials,
\href{https://global.oup.com/academic/product/symmetric-functions-and-hall-polynomials-9780198504504}{\emph{Oxford Mathematical Monographs}}, Oxford University Press, Oxford, Second edition, 1995.

\bibitem{Marquette_Quesne}
Marquette I., Quesne C.,
Two-step rational extensions of the harmonic oscillator: exceptional orthogonal polynomials and ladder operators,
\href{https://doi.org/10.101088/1751-8113/46/15/155201}{\emph{Journal of Physics A: Mathematical and Theoretical}} \textbf{46} (2013), 155201,
\href{https://arxiv.org/abs/1212.3474}{arXiv:1212.3474}.

\bibitem{Masoero_Roffelsen}
Masoero D., Roffelsen P.,
Poles of Painlev\'e IV rationals and their distribution,
\href{https://doi.org/10.3842/SIGMA.2018.002}{\emph{Symmetry Integrability and Geometry: Methods and Applications}} \textbf{14} (2018), 002, 49 pages, 
\href{https://arxiv.org/abs/1707.05222}{arXiv:1707.05222}. 

\bibitem{Masoero_Roffelsen-2019}
Masoero D., Roffelsen P.,
Roots of generalised Hermite polynomials when both parameters are large,
preprint, \href{https://arxiv.org/abs/1907.08552}{arXiv:1907.08552}. 

\bibitem{Murata}
Murata Y.,
Rational solutions of the second and the fourth Painlev\'e equations,
\emph{Funkcialaj Ekvacioj} \textbf{28} (1985), 1--32.

\bibitem{Nath}
Nath R.,
Advances in the theory of cores and simultaneous core partitions,
\href{https://doi.org/10.4169/amer.math.monthly.124.9.844}{\emph{The American Mathematical Monthly}} \textbf{124} (2017), 844--861.

\bibitem{Noumi}
Noumi M., 
Painlev\'e equations through Symmetry, 
\href{https://doi.org/10.1090/mmono/223}{\emph{Translations of Mathematical Monographs}}, Volume 223, American Mathematical Society, 2004.

\bibitem{Noumi_Yamada}
Noumi M., Yamada Y.,
Symmetries in the fourth Painlev\'e equation and Okamoto polynomials,
\href{https://doi.org/10.1017/S0027763000006899}{\emph{Nagoya Mathematical Journal}} \textbf{153} (1999), 53--86.

\bibitem{Novokshenov_Shchelkonogov}
Novokshenov V.Yu., Shchelkonogov A.A., 
Double scaling limit in the Painlev\'e IV equation and asymptotics of the Okamoto polynomials,
\href{https://doi.org/10.1090/trans2/233/12}{\emph{Spectral Theory and Differential Equations: V. A. Marchenko’s 90th Anniversary Collection}} \textbf{233} (2014), 199--210, American Mathematical Society Translations: Series 2.

\bibitem{Oblomkov}
Oblomkov A.A.,
Monodromy-free Schr\"odinger operators with quadratically increasing potentials,
\href{https://doi.org/10.1007/BF02557204}{\emph{Theoretical and Mathematical Physics}} \textbf{121} (1999), 1574--1584. 

\bibitem{Odake_Sasaki}
Odake S., Sasaki R.,
Infinitely many shape invariant potentials and new orthogonal polynomials, 
\href{https://doi.org/10.1016/j.physletb.2009.08.004}{\emph{Physics Letters B}} \textbf{679} (2009), 414--417,
\href{https://arxiv.org/abs/0906.0142}{arXiv:0906.0142}.

\bibitem{Okamoto}
Okamoto K.,
Studies on the Painlev\'e equations III. Second and fourth painlev\'e equations, $P_{II}$ and $P_{IV}$,
\href{http://www.doi.org/10.1007/BF01458459}{\emph{Mathematische Annalen}} \textbf{275} (1986), 221-255.

\bibitem{Quesne}
Quesne C.,  
Exceptional orthogonal polynomials, exactly solvable potentials and supersymmetry,
\href{https://doi.org/10.1088/1751-8113/41/39/392001}{\emph{Journal of Physics A: Mathematical and Theoretical}} \textbf{41} (2008), 392001,
\href{https://arxiv.org/abs/0807.4087}{arXiv:0807.4087}.

\bibitem{Roberts}
Roberts D.P.,
Discriminants of some Painlev\'e polynomials,
\href{https://www.crcpress.com/Number-Theory-for-the-Millennium-III/Berndt/p/book/9781568811529}{\emph{Number Theory for the Millennium III}} (2002), 205--221.

\bibitem{Sato1}
Sato M.,
Soliton equations as dynamical systems on a infinite dimensional Grassmann manifolds (Random Systems and Dynamical Systems), 
\href{http://hdl.handle.net/2433/102800}{\emph{RIMS Kokyuroku}} \textbf{439} (1981), 30-46.

\bibitem{Sato2}
Sato M.,
Soliton equations as dynamical systems on infinite dimensional Grassmann manifolds, 
\href{https://doi.org/10.1016/S0304-0208(08)72096-6}{\emph{North-Holland Mathematics Studies}} \textbf{81} (1983), 259-271. 
 
\bibitem{Stanley_EC2}
Stanley R.P.,
Enumerative Combinatorics,
\href{https://www.cambridge.org/core/books/enumerative-combinatorics/D8DDDFF7E8EBF0BCFE99F5E6918CE2A8}{\emph{Cambridge Studies in Advanced Mathematics}}, Volume 2, Cambridge University Press, Cambridge, 1999.

\bibitem{Szego}
Szeg\H{o} G., 
Orthogonal Polynomials, 
\href{https://mathscinet.ams.org/mathscinet-getitem?mr=0372517}{\emph{American Mathematical Society, Colloquium Publications}}, Volume 23, 4th edition, American Mathematical Society, Providence, R.I., 1975.

\bibitem{Veselov}
Veselov A.P.,
On Stieltjes relations, Painlev\'e-IV hierarchy and complex monodromy,
\href{https://doi.org/10.1088/0305-4470/34/16/318}{\emph{Journal of Physics A: Mathematical and General}} \textbf{34} (2001), 3511--3519,
\href{https://arxiv.org/abs/math-ph/0012040}{arXiv:math-ph/0012040}.

\bibitem{VanAssche}
Van Assche W., 
Orthogonal Polynomials and Painlev\'e Equations, 
\href{https://doi.org/10.1017/9781108644860}{\emph{Australian Mathematical Society Lecture Series}}, Volume 27, Cambridge University Press, Cambridge, 2018.

\bibitem{Wildon}
Wildon M.,
Counting partitions on the abacus,
\href{https://doi.org/10.1007/s11139-006-9013-5}{\emph{The Ramanujan Journal}} \textbf{17} (2008), 355--367, 
\href{https://arxiv.org/abs/math/0609175}{arXiv:math/0609175}.
\end{thebibliography}
\end{document}